\numberwithin{equation}{subsection}
\DeclareSymbolFont{cyrletters}{OT2}{wncyr}{m}{n}
\DeclareMathSymbol{\Sha}{\mathalpha}{cyrletters}{"58}
\newcommand{\F}{\mathbf{F}}
\newcommand{\CC}{\mathbf{C}}
\newcommand{\tr}[0]{\operatorname{tr}}
\newcommand{\wt}[1]{\widetilde{#1}}
\newcommand{\Z}{\mathbf{Z}}
\newcommand{\leg}[2]{\left(\frac{#1}{#2}\right)}
\newcommand{\mf}[1]{\mathfrak{#1}}
\newcommand{\Gal}{\operatorname{Gal}}
\newcommand{\cal}[1]{\mathcal{#1}}
\newcommand{\R}{\mathbb{R}}
\newcommand{\ol}[1]{\overline{#1}}
\newcommand{\wh}[1]{\widehat{#1}}
\newcommand{\Cal}[1]{\mathcal{#1}}
\newcommand{\co}{\colon}
\newcommand{\mrm}[1]{\mathrm{#1}}
\newcommand{\msf}[1]{\mathsf{#1}}
\newcommand{\floor}[1]{\lfloor #1 \rfloor}
\DeclareMathOperator{\GL}{GL}
\DeclareMathOperator{\SL}{SL}
\DeclareMathOperator{\Frob}{Frob}
\DeclareMathOperator{\ab}{ab}
\DeclareMathOperator{\coker}{coker}
\DeclareMathOperator{\N}{\mathbb{N}}
\DeclareMathOperator{\Tr}{Tr}
\DeclareMathOperator{\SO}{SO}
\DeclareMathOperator{\Hom}{Hom}
\DeclareMathOperator{\Ind}{Ind}
\DeclareMathOperator{\Aut}{Aut}
\DeclareMathOperator{\Nm}{Nm}
\DeclareMathOperator{\Art}{Art}
\DeclareMathOperator{\Ver}{Ver}
\DeclareMathOperator{\Res}{Res}
\DeclareMathOperator{\Stab}{Stab}
\DeclareMathOperator{\Id}{Id}
\DeclareMathOperator{\Sym}{Sym}
\DeclareMathOperator{\ind}{ind}
\DeclareMathOperator{\SU}{SU}
\DeclareMathOperator{\disc}{disc}
\newtheorem{thm}{Theorem}[section]
\newtheorem{lemma}[thm]{Lemma}
\newtheorem{prop}[thm]{Proposition}
\newtheorem{cor}[thm]{Corollary}
\theoremstyle{remark}
\newtheorem{remark}[thm]{Remark} 
\newtheorem{defn}[thm]{Definition}
\def\th@remark{%
  \thm@headfont{\bfseries}%
  \normalfont 
  \thm@preskip \thm@preskip 
  \thm@postskip\thm@preskip
}
\def\imod#1{\allowbreak\mkern5mu({\operator@font mod}\,\,#1)}
\title{Epipelagic Langlands parameters and L-packets for unitary groups}
\author{Tony Feng, Niccolo Ronchetti, Cheng-Chiang Tsai}
\begin{document}

\begin{abstract}
Reeder and Yu have recently given a new construction of a class of supercuspidal representations called \emph{epipelagic representations} \cite{RY14}. We explicitly calculate the Local Langlands Correspondence for certain families of epipelagic representations of unitary groups, following the general construction of Kaletha \cite{Kal15}. The interesting feature of our computation is that we find simplifications within $L$-packets of the two novel invariants introduced in \cite{Kal15}, the \emph{toral invariant} and the \emph{admissible L-embedding}.
\end{abstract}

\maketitle

\tableofcontents

\section{Introduction}

The purpose of this paper is to explicitly compute the local Langlands correspondence for certain \emph{epipelagic representations} of (special) unitary groups. The epipelagic representations are a class of supercuspidal representations with minimal positive depth introduced and studied by Reeder and Yu in \cite{RY14}, where they discovered a systematic construction of epipelagic representations.

We should clarify what we mean by ``the'' local Langlands correspondence. Kaletha has given an explicit construction of $L$-packets of epipelagic representations in \cite{Kal15}. His construction is compatible with a plethora of expected properties of a Langlands correspondence, and this is what we take to be the ``local Langlands correspondence''. 

So, at a high level this paper is simply an explication of Kaletha's construction in the case of unitary groups. However, carrying out Kaletha's recipe is not totally straightforward even in these special cases. It involves a number of intricate calculations, and the main contribution of the present work is to simplify and interpret the output of these calculations in a manner that clarifies the ultimate shape of the L-packets. In doing so, we discover an interesting structural feature, which is however a little technical to state and will be explained over the course of the introduction. 

The motivation for the computation here comes from a desire to understand the relationship between Kaletha's construction of the Langlands correspondence for epipelagic representations and an earlier suggestion by Reeder-Yu \cite[\S 7]{RY14}. To explain this, we need to delve a little more into the details and history of epipelagic representations. \\
	
\subsection{The work of Reeder-Yu and of Kaletha}
Given a group $G$ over a local field $F$, one has the Bruhat-Tits building $\Cal{B}_G(F)$. For each point $x \in \Cal{B}_G(F)$ there is a (decreasing) \emph{Moy-Prasad filtration} $\{G_{x, r} \co  r \in \R_{\ge 0} \}$ whose jumps are indexed by the non-negative multiples of $\frac{1}{m}$, where $m \in \N$ depends on $G$ and $x$. From its subquotients one extracts an algebraic group $\wt{\msf{G}}_x$ (with the property that $\wt{\msf{G}}_x(k_F) = G_{x,0} / G_{x,0+}$) acting on a representation $\msf{V}_x = G_{x,r(x)} / G_{x, r(x)+}$ over the residue field $k_F$ of $F$. 

In \cite[\S 2]{RY14}, an (irreducible) epipelagic representation is built out of the following data: 
\begin{enumerate}
\item a functional $\lambda$ on $\msf{V}_x$, which is stable (in the sense of GIT) for the action of $\wt{\msf{G}}_x$, 
\item a character $\psi$ of a certain finite group stabilizing $\lambda$. 
\end{enumerate}
Roughly speaking, a finite direct sum of irreducible supercuspidal representations are built out of $\lambda$ by an induction process, and the character $\psi$ is used to pick out an irreducible summand. We will denote by $\rho_{\lambda,\psi}$ the epipelagic representation attached to the data $(\lambda, \psi)$.

When Reeder and Yu introduced the notion of epipelagic representations, they considered the problem of attaching Langlands parameters. In the case of an \emph{absolutely simple, simply connected} group they gave a template for attaching a Langlands parameter to a \emph{stable functional} $\lambda$ \cite[\S 7.2-7.3]{RY14}, in the form of an algorithm with a couple steps where certain choices are not uniquely specified. The algorithm goes through the Vinberg-Levy theory of graded Lie algebras. 

Shortly afterwards, Kaletha gave a completely different construction of the Langlands correspondence for epipelagic representations, using a strategy inspired by \cite{DR09}. The rough idea is to try to factor a Langlands parameter through the L-group of a torus, obtaining a character of a subtorus of $G$ by the local Langlands correspondence for tori, and then try to induce up to a representation of $G$. Notably, Kaletha observed two novel subtleties in this case: (1) the character needs to be modified by a \emph{toral invariant}, which plays a similar role to the rectifying character of Bushnell-Henniart, and (2) one needs to make a specific choice of \emph{admissible embedding} of the L-group of the torus. For more details, see the introduction of \cite{Kal15}. 

On \cite[p.7]{Kal15} Kaletha raises the question of comparing the Reeder-Yu template, which was suggested only for \emph{simply connected, absolutely simple} groups to his own construction. In a private communication, Kaletha emphasized to us that the two new subtleties, the toral invariant and the admissible embedding, seem invisible in the Reeder-Yu algorithm, suggesting that either the Vinberg-Levy theory used by Reeder-Yu might encode them implicitly, or that there might be some sort of cancellation in the special setting of simply connected absolutely simple groups that obviates the need to consider them. (Both the toral invariant and admissible embedding are parametrized by choices of signs corresponding to the roots of $G$, so it really makes sense to speak of them ``cancelling out''.) 

Such a cancellation, which is extremely nonobvious from the definitions, should have interesting arithmetic significance. It was with this in mind that we set out to understand some examples for non-split, absolutely simple and simply connected groups. Based on earlier (unpublished) work of the third-named author, CCT, we suspected that $\SU_n$ would be a good place to start. 

We should note that Kaletha's had earlier shown \cite{K13} that in the setting of \emph{simple wild representations}, which are some particularly simple instances of epipelagic representations, the toral invariant and $L$-embedding turn out to be negligible. Kaletha explained to us that this was his main motivation for suspecting a cancellation in the setting of Reeder and Yu. 

\begin{remark}\label{size L-packets}
The work of \cite{Kal15} confirms the predictions of \cite{RY14} for the L-packet, at least in their most conservative form. However, the results \cite{RY14} are suggestive of more ambitious predictions. 

In particular, it was striking to the authors that Reeder and Yu showed that the size of the L-packet corresponding to an epipelagic representation for an absolutely simple, semisimple, simply connected group built from $(\lambda, \psi)$ agrees exactly with the number of permissible $\psi$, according to standard conjectures on the Langlands correspondence \cite[p.466-467]{RY14}. Since the Reeder-Yu template makes reference only to $\lambda$, the numerics seem to suggest that the L-packets might have the shape $\{\rho_{\lambda, \psi} \text{ for all possible $\psi$}\}$. Moreover, this was known to be true by earlier work of Kaletha \cite{K13} for the \emph{simple supercuspidals}, a class of representations constructed by Gross-Reeder \cite{GR10} which are special cases of, and precursors to, epipelagic representations. We emphasize though that this was an initial guess of the first two named authors, TF and NR, which was never stated by Reeder-Yu, and indeed it turns out to be incorrect in general. 
\end{remark}

\subsection{Outline of results}

We may now give a rough description of our findings. A more precise statement of results appears in \S \ref{ssec: precise summary}; however due to the level technicality needed there, we thought it would be helpful to begin with a more informal summary. 

We study the epipelagic representations coming from vertices in the Bruhat-Tits building for the (special) unitary groups $G=\SU_n$ and $G=\mrm{U}_n$ associated to a ramified quadratic extension. Using classical results in invariant theory, we identify stable functionals on $\msf{V}_x$ for $x$ a vertex in $\Cal{B}_G(F)$. (Although our original interest was in $\mrm{SU}_n$, we later realized that all its epipelagic representations under consideration were restrictions from $\mrm{U}_n$, and this was useful for computing the Langlands parameter.) The basics of epipelagic representations are recalled in \S \ref{section epipelagic representations}, and our particular representations of interest $\rho_{\lambda, \psi}$ are constructed in \S \ref{section construction epip reps}.

After briefly recalling Kaletha's construction of epipelagic L-packets from epipelagic Langlands parameters in \S \ref{section Langlands parameters}, we run it in reverse to describe the Langlands parameter attached to $\rho_{\lambda, \psi}$ in \S \ref{section calculation}. One has to compute the (character associated to the) toral invariant $\epsilon_{\lambda}$, and the admissible embedding ${}^L j$, which involves lengthy calculations of $\chi$-data, etc. 

Although \cite{Kal15} completely prescribes the choices needed to determine $\epsilon_{\lambda}$ and ${}^L j$, the raw answer is too complicated to see what is going on. The novel aspect of our calculations here is in distilling this raw answer to a form that elucidates the structure of the L-packets. We will state the result for $G= \mrm{U}_n$; a similar but slightly more complicated description holds for $\SU_n$.

\begin{remark}
We warn at the outset that we are working with a crude notion of $L$-packets: our ``$L$-packets'' consist of those representations of the single group $G$ which have the same Langlands parameter. In other words, we are ignoring representations of inner forms. 
\end{remark}

The epipelagic representations built from a stable functional $\lambda$ coincide with those built from another stable functional $\lambda'$ conjugate to $\lambda$ under $\wt{\msf{G}}_x(k_F)$; thus epipelagic representations are really indexed by (rational) conjugacy classes in $\wt{\msf{G}}_x(k_F)$. We find that if $\rho_{\lambda, \psi}$ and $\rho_{\lambda', \psi'}$ appear in the same $L$-packet, then $\lambda$ and $\lambda'$ are conjugate under the $\wt{\msf{G}}_x(\ol{k}_F)$-action on $\msf{V}_x(\ol{k}_F)$. To emphasize the difference between this and the previous notion of conjugacy, in this case we say that $\lambda$ and $\lambda'$ are \emph{stably conjugate}. The question of which pairs $(\lambda, \psi)$ appear comes down to a recipe involving the toral invariant $\epsilon_{\lambda}$ and ${}^L j$. Now, it is possible to parametrize the space of choices for $\epsilon_{\lambda}$ and ${}^L j$ by the same group $C_{\lambda}^{\vee}$.  Furthermore, there is a canonical identification $C_{\lambda} \cong C_{\lambda'}$ when $\lambda$ and $\lambda'$ are stably conjugate, which we use to view these parameters in the same group. Motivated by the possibility of an interesting ``cancellation'' between the the two, we consider the ``difference'' between $\epsilon_{\lambda}$ and ${}^L j$ in $C_{\lambda}^{\vee}$. Curiously, the result depends qualitatively on the whether the rank of our unitary group is even or odd. 

For $\mrm{U}_n$ with $n$ \emph{even}, we find that the toral invariant $\epsilon_{\lambda}$ depends only on the \emph{stable} conjugacy class of $\lambda$, i.e. it is constant under $\wt{\msf{G}}(\ol{k}_F)$-orbits (whereas a priori it is a function of $\wt{\msf{G}}(k_F)$-orbits). This implies that the toral invariant is constant among representations in the same $L$-packet. This constancy is fairly non-obvious from the raw calculation, and is established in Corollary \ref{cor: toral invariant 2}. The data of the $L$-embedding ${}^L j$ is still quite complicated, and in this case we don't seem to have a ``nice'' description of the $L$-packets.

 For $\mrm{U}_n$ with $n$ \emph{odd}, the picture of the $L$-packets is somewhat more satisfactory. The rational orbits within the stable orbits can also be naturally parametrized by $C_{\lambda}^{\vee}$, and we find that the relative position between $``\epsilon_{\lambda}- {}^L j"$ for two different pairs $\rho_{\lambda, \psi}$ and $\rho_{\lambda', \psi'}$ appearing in the same L-packet coincides exactly with the relative position of the rational orbits of $\lambda$ and $\lambda'$ within their stable orbit. The precise statement appears in Proposition \ref{kostant identity}. In fact the characters $\psi$ are also parametrized by the same group $C_{\lambda}^{\vee}$, and we find:

\begin{thm}\label{main}
For the representations of the form $\rho_{\lambda, \psi}$ of $\mrm{U}_n$, with $n$ odd, constructed in \S \ref{section construction epip reps}, two such representations $\rho_{\lambda, \psi}$ and $\rho_{\lambda', \psi'}$ lie in the same L-packet if and only if $\lambda + \psi =  \lambda' + \psi'$ as elements of $C_{\lambda}^{\vee}$. 
\end{thm}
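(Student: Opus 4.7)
The plan is to extract the statement directly from Kaletha's recipe for the Langlands parameter, as reconstructed in \S\ref{section calculation}. Two representations $\rho_{\lambda,\psi}$ and $\rho_{\lambda',\psi'}$ lie in the same crude L-packet if and only if their Langlands parameters coincide, so I would unpack that condition in three stages.

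First, a reduction to stable conjugacy. The parameter attached to $\rho_{\lambda,\psi}$ factors through the L-group of the torus determined by $\lambda$; if $\lambda$ and $\lambda'$ are not in the same stable orbit, these tori are not stably isomorphic and so the parameters cannot agree. This observation, already noted in the discussion preceding the theorem, restricts attention to a fixed stable orbit. Within that orbit the canonical identification $C_\lambda \cong C_{\lambda'}$ lets us view $\psi$, $\psi'$, $\epsilon_\lambda$, $\epsilon_{\lambda'}$, ${}^L j_\lambda$, and ${}^L j_{\lambda'}$ all as elements of a single group $C^\vee$.

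Second, translating equality of Langlands parameters into an additive relation in $C^\vee$. By Kaletha's construction, the parameter is built from the character $\psi$ on the relevant finite stabilizer, twisted by the toral invariant $\epsilon_\lambda$ and then transported by the admissible embedding ${}^L j_\lambda$. After the identifications of the previous stage, equality of parameters becomes
\[
\psi + \bigl({}^L j_\lambda - \epsilon_\lambda\bigr) \;=\; \psi' + \bigl({}^L j_{\lambda'} - \epsilon_{\lambda'}\bigr) \qquad \text{in } C^\vee,
\]
equivalently $\psi - \psi' = ({}^L j_{\lambda'} - \epsilon_{\lambda'}) - ({}^L j_\lambda - \epsilon_\lambda)$. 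I emphasize that each step here is an equivalence, not merely an implication, which is what eventually yields the \emph{if and only if}.

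Third, I invoke Proposition \ref{kostant identity}. For $n$ odd, that proposition identifies the right-hand side of the above relation with $\lambda' - \lambda$, the relative position of the two rational orbits within their common stable orbit, viewed inside $C^\vee$. Substituting gives $\psi - \psi' = \lambda' - \lambda$, which is exactly $\lambda + \psi = \lambda' + \psi'$.

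The real obstacle is Proposition \ref{kostant identity} itself, which absorbs all the substantive arithmetic content of the theorem. It matches two a priori unrelated data: on one side the combinatorial sign ${}^L j_\lambda - \epsilon_\lambda$ built from $\chi$-data and toral signs, on the other a Kostant-section-style parametrization of rational orbits inside a stable orbit. The oddness of $n$ enters decisively, since it produces sign cancellations that are unavailable for $n$ even, which is consistent with the weaker parallel result in the even-rank case. Granting Proposition \ref{kostant identity}, the three stages chain together to prove the theorem.
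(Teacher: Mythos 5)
Your proposal tracks the paper's argument exactly: the reduction to a common stable orbit, the translation of ``same Langlands parameter'' into the additive relation $[\epsilon_\lambda - \epsilon_{\lambda'}] + [r_\lambda - r_{\lambda'}] = [\psi'] - [\psi]$ in $C_\lambda^\vee$ (the paper's equation \eqref{equal Langlands parameters}), and the final substitution via Proposition \ref{kostant identity} combined with the explicit formulas \eqref{diff epsilons} and \eqref{diff rps}. The one small imprecision is that you credit the $n$-odd hypothesis and the identification with the relative position entirely to Proposition \ref{kostant identity}, whereas the proposition itself holds for all $n$; the oddness is needed so that the factor $\bigl(\tfrac{\eta_i}{q^{d_i}}\bigr)^{n}$ in Corollary \ref{cor: toral invariant 2} survives (for $n$ even it is identically $1$, so $[\epsilon_\lambda - \epsilon_{\lambda'}]$ is trivial and cannot match $[\lambda-\lambda']$), and the claim is obtained only after comparing that formula and \eqref{r_p computed} against the proposition.
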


The more precise statements, which however require some more notation and explanation, appear in Corollary \ref{unitary epi param} and Corollary \ref{special epi param}.

Our computation that the difference between $\epsilon_{\lambda}$ and ${}^L j$ is the relative position of conjugacy classes confirms a suspicion, stemming from the work of \cite{RY14}, that the toral invariant and L-embedding might ``cancel out'' to something simpler for certain types of groups. However, it also rebuffs our initial expectations, also stemming from the results of \cite[Remark \ref{size L-packets}]{RY14}, that the L-packets would consist of $\rho_{\lambda, \psi}$ with fixed $\lambda$. In fact we find the complete opposite: 

\begin{cor}
For $\mrm{U}_n$, with $n$ odd, the L-packet of $\rho_{\lambda, \psi}$ consists of $\{\rho_{\lambda_i, \psi_i}\}$ where $\{\lambda_i\}$ represents an enumeration of the rational orbits within the stable orbit of $\lambda$, and $\{\psi_i\}$ is an enumeration of the characters of $C_{\lambda}$. 
\end{cor}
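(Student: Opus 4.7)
The plan is to derive the corollary directly from Theorem \ref{main}, once we line up the three parametrizations by $C_\lambda^\vee$ set up in the introduction: characters $\psi$ of $C_\lambda$, the ``toral-invariant--minus--$L$-embedding'' invariant, and the rational orbits within the stable orbit of $\lambda$.

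First, Theorem \ref{main} describes the L-packet of $\rho_{\lambda,\psi}$ as the set of $\rho_{\lambda',\psi'}$ satisfying $\lambda + \psi = \lambda' + \psi'$ in $C^\vee$. From the general discussion preceding the theorem, any such $\lambda'$ must be stably conjugate to $\lambda$, and hence is represented by one of the rational orbit representatives $\lambda_1,\ldots,\lambda_N$ in the stable orbit of $\lambda$. Using the canonical identification $C_{\lambda_i}\cong C_\lambda$ available for stably conjugate functionals, one views each $\psi_i$ as a character of $C_\lambda$, and the equation of Theorem \ref{main} then determines it uniquely as $\psi_i = (\lambda + \psi) - \lambda_i \in C_\lambda^\vee$. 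In particular, the L-packet contains exactly one representation $\rho_{\lambda_i,\psi_i}$ for each rational orbit representative $\lambda_i$.

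It remains to check that as $\lambda_i$ varies over the rational orbits, the corresponding $\psi_i$ exhaust all of $C_\lambda^\vee$. Here one invokes Proposition \ref{kostant identity}, whose content is precisely that the rational orbits within the stable orbit form a $C_\lambda^\vee$-torsor whose relative-position function agrees with differences in $C^\vee$. Translating by $-(\lambda+\psi)$ therefore gives a bijection between the rational-orbit representatives $\{\lambda_i\}$ and the set $\{\psi_i\}\subset C_\lambda^\vee$ of all characters of $C_\lambda$, which is what the corollary asserts.

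The one point to verify carefully--and the step I would regard as the main obstacle--is the compatibility between the affine structure on $C^\vee$ implicit in the expression $\lambda+\psi$ of Theorem \ref{main} and the $C_\lambda^\vee$-torsor structure on rational orbits supplied by Proposition \ref{kostant identity}. Once these parametrizations are aligned, everything drops out at once: the size of the L-packet equals $|C_\lambda^\vee|$, simultaneously matching the number of rational orbits in the stable orbit and the number of characters of $C_\lambda$.
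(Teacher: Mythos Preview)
Your proposal is correct and follows essentially the same (implicit) deduction the paper intends: from Theorem~\ref{main} (equivalently Theorem~\ref{unitary epi param}), the L-packet condition $[\lambda-\lambda']=[\psi']-[\psi]$ determines a unique $\psi_i$ for each rational orbit $\lambda_i$, and as $\lambda_i$ ranges over the $C_\lambda^\vee$-torsor of rational orbits the resulting $\psi_i$ exhaust $C_\lambda^\vee$.

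One minor correction of attribution: the fact that rational orbits within the stable orbit form a $C_\lambda^\vee$-torsor is not the content of Proposition~\ref{kostant identity}. That torsor structure is established at the start of \S\ref{Orbit parametrization} via Galois cohomology (Lang's theorem gives $H^1(k,\SO_n)=0$, so the rational orbits are a torsor for $H^1(k,D_\lambda)\cong C_\lambda^\vee$ by Tate duality). Proposition~\ref{kostant identity} instead pins down a basepoint (the Kostant--Weierstrass section) and computes the resulting cocycle explicitly; its role is to make the identification of $[\lambda-\lambda']$ in Theorem~\ref{unitary epi param} compatible with the torsor parametrization, which is exactly the ``compatibility'' you flag in your last paragraph. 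So your caution there is well placed, and the resolution is: cite the torsor structure from the opening of \S\ref{Orbit parametrization}, and cite Proposition~\ref{kostant identity} for the compatibility of the two $C_\lambda^\vee$-structures.
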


\begin{remark}
The later paper \cite{Kal17} offers a different construction of the Langlands correspondence for a much more general class of representations, and also features a type of ``cancellation'' in which the particular choice of $L$-embedding in \cite{Kal15} becomes irrelevant. It is not clear to  what extent, if any, the two types of cancellation are related, although they seem to be at least philosophically connected. 
\end{remark}

The restrictions $\rho_{\lambda, \psi}|_{\mrm{SU}_n}$ are already irreducible (Corollary \ref{SU_n restriction}), so their Langlands parameters can be deduced from the ones for $\rho_{\lambda, \psi}$. This leads to: 

\begin{cor}
For $\mrm{SU}_n$, with $n$ odd, the L-packet of $\rho_{\lambda, \psi}|_{\mrm{SU}_n}$ consists of $\{\rho_{\lambda_i, \psi_i}|_{\mrm{SU}_n}\}$ where $\{\lambda_i\}$ represents an enumeration of the rational orbits within the stable orbit of $\lambda$, and $\{\psi_i\}$ is an enumeration of the characters of $C_{\lambda} \cap \mrm{SU}_n$ with multiplicity 2. 
\end{cor}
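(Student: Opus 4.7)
The plan is to deduce this from the preceding $\mrm{U}_n$-corollary using the standard relationship between $L$-packets for $\mrm{SU}_n \hookrightarrow \mrm{U}_n$. First, I would invoke Corollary \ref{SU_n restriction} to conclude that every restriction $\rho_{\lambda,\psi}|_{\mrm{SU}_n}$ is irreducible, so two such restrictions coincide if and only if the original $\mrm{U}_n$-representations differ by tensoring with a character of $\mrm{U}_n(F)$ trivial on $\mrm{SU}_n(F)$, i.e.\ a character factoring through the determinant $\mrm{U}_n(F) \to \mrm{U}_1(F)$.

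Next, I would use that the Langlands parameter of $\rho|_{\mrm{SU}_n}$ is the composition of the parameter of $\rho$ with the natural projection ${}^L \mrm{U}_n \to {}^L \mrm{SU}_n$ (on dual groups, $\mrm{GL}_n \to \mrm{PGL}_n$). Two parameters into ${}^L \mrm{U}_n$ project to $\widehat{\mrm{SU}_n}$-conjugate parameters iff they differ by a 1-cocycle into the center $Z(\widehat{\mrm{U}_n})$, which by local Langlands for tori corresponds precisely to tensoring by a character of $\mrm{U}_1(F)$. Combining these, the $L$-packet of $\rho_{\lambda,\psi}|_{\mrm{SU}_n}$ is the image under restriction of the union of $\mrm{U}_n$-$L$-packets of $\rho_{\lambda,\psi} \otimes \chi$ as $\chi$ varies.

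Now I would apply Theorem \ref{main} and the preceding $\mrm{U}_n$-corollary to enumerate each such $\mrm{U}_n$-$L$-packet: its elements are $\rho_{\lambda_i, \psi_i}$ with $\lambda_i$ ranging over rational orbits inside the stable orbit of $\lambda$ and $\psi_i$ a character of $C_{\lambda_i}$. As $\chi$ varies, all such pairs appear. The identification $\rho_{\lambda', \psi'}|_{\mrm{SU}_n} \cong (\rho_{\lambda', \psi'} \otimes \chi)|_{\mrm{SU}_n}$ then descends the $\psi$-parametrization from $\widehat{C_{\lambda_i}}$ to $\widehat{C_{\lambda_i} \cap \mrm{SU}_n}$ via the natural restriction of characters.

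The main obstacle is to verify that for $n$ odd this restriction map $\widehat{C_{\lambda_i}} \to \widehat{C_{\lambda_i} \cap \mrm{SU}_n}$ is exactly two-to-one, which yields the multiplicity $2$ in the statement. Concretely one must check that $[C_{\lambda_i} : C_{\lambda_i} \cap \mrm{SU}_n] = 2$ and that the two extensions of a given character of $C_{\lambda_i} \cap \mrm{SU}_n$ give distinct $\mrm{U}_n$-representations whose $\mrm{SU}_n$-restrictions coincide. This rests on the structure of $Z(\widehat{\mrm{U}_n})$ and the effect of a $\mrm{U}_1(F)$-twist on the class $\lambda + \psi \in C^{\vee}$ of Theorem \ref{main}, and it is precisely the parity of $n$ that makes the odd case clean—paralleling how parity entered in the $\mrm{U}_n$ computation.
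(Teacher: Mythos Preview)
Your approach is essentially the paper's. The paper (see the bullet points preceding Corollary~\ref{special epi param}) is slightly more direct than your twist-by-$\chi$ framing: rather than varying over all characters of $\mrm{U}_1(F)$, it observes that passing from ${}^L\mrm{U}_n$ to ${}^L\mrm{SU}_n$ collapses exactly the pair of $\mrm{U}_n$-parameters differing by the single central element $z\in C_{\lambda}^{\vee}$ corresponding to $\mrm{diag}(-1,\ldots,-1)$, and uses the second clause of Lemma~\ref{SU_n restriction} directly to see that restrictions coincide precisely when the $\psi$'s agree on $C_{\lambda}\cap\mrm{SU}_n$. The index-$2$ check you single out as the main obstacle is precisely what the paper verifies: since $n=\sum d_i$ is odd, not all $d_i$ are even, so $\det\colon C_{\lambda}\to\{\pm1\}$ is surjective and $[C_{\lambda}:C_{\lambda}\cap\mrm{SU}_n]=2$.
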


The outcome of the computation in the case where $n$ is even is not sufficiently clear for us to deduce these sorts of qualitative statements in that case. 

\subsection{Precise statement of results}\label{ssec: precise summary} We now give a precise summary of the results. Unfortunately, they require a heavy amount of notation to state. 

Let $G = \mathrm U_n$ be the unitary group in $n$ variables associated to a tamely ramified quadratic extension $E/F$. The hermitian form defining $G$ puts on $E^n$ the structure of an inner product space, which we denote by $(V , \langle \cdot, \cdot \rangle)$.

\subsubsection{Setup to produce an epipelagic representation}
Let $x$ be a hyperspecial point in the building $\Cal{B}_G(E)$ of $G$ over $E$, such that $x$ is not hyperspecial in the building $\Cal{B}_G(F)$ over $F$.

We are interested in the following successive quotients of the Moy-Prasad filtration for the point $x$:
\begin{itemize}
\item $G_{x} / G_{x, 1/2}$, which can be identified with the group of $k_F$-points of an algebraic group $\wt{\msf{G}}_x$;
\item $G_{x, 1/2} / G_{x,1}$.
\end{itemize}
The conjugation action of $G_{x} / G_{x, 1/2}$ on $G_{x, 1/2} / G_{x,1}$ can be promoted to an algebraic representation $\msf{V}_x$ of $\wt{\msf{G}}_x$ over $k_F$.

Let $\lambda: \msf{V}_x \rightarrow k_F$ be a functional which is GIT-stable for $\wt{\msf{G}}_x$, i.e. so $\lambda$ is an element of the contragredient representation of $\msf{V}_x$ whose stabilizer under the $\wt{\msf{G}}_x$-action is a finite algebraic group, and whose $\wt{\msf{G}}_x$-orbit is closed.

Let $\chi_{\lambda} =\chi \circ \lambda: \msf{V}_x \rightarrow \CC^{\times}$ be the composition of $\lambda$ with an injective morphism $k_F \rightarrow \CC^{\times}$. Lift $\chi_{\lambda}$ to a character of $G_{x, 1/2}$. $G_{x}$ acts on the set of characters of $G_{x, 1/2}$, and we denote by $H_{x, \lambda}$ the stabilizer of $\chi_{\lambda}$.

Let $C_{\lambda} = H_{x, \lambda} / G_{x, 1/2}$, which is a finite abelian $2$-group.
We denote by $\psi$ any character of $C_{\lambda}$. Each such character yields a lift $\chi_{\lambda, \psi}$ of $\chi_{\lambda}$ to $H_{x, \lambda}$.

We let $\rho_{\lambda, \psi} := \ind_{H_{x,\lambda}}^{G(F)} \chi_{\lambda, \psi}$ be the irreducible, epipelagic representation induced by $\chi_{\lambda, \psi}$.

Similarly, given another stable functional $\lambda'$ and another character $\psi'$ of $C_{\lambda'}$, we denote $\rho_{\lambda', \psi'} := \ind_{H_{x,\lambda'}}^{G(F)} \chi_{\lambda', \psi'}$.

\subsubsection{Parametrization of representations}
If $\lambda$ and $\lambda'$ are conjugate under $\wt{\msf{G}}_x$ (say, by an element $\widehat s \in \wt{\msf{G}}_x$), we obtain a canonical identification $C_{\lambda} = C_{\lambda'}$ (given by conjugation by any lift of $\widehat s$).

$\wt{\msf{G}}_x$ is isomorphic to the split orthogonal group $\mrm{O}_{n}$, and its connected component $\mrm{SO}_{n}$ has $k_F$-points corresponding to $G_{x,0} / G_{x,1/2}$.
As representations of $\mrm{SO}_{n}$, we have $\check{\msf{V}}_x \cong \msf{V}_x \cong \Sym^2(\mrm{Std})$, where the first isomorphism is given by the trace form.

This allows us to identify $\lambda$ with a $n \times n$ self-adjoint matrix with coefficients in $k_F$. The stability condition implies that this matrix is regular semisimple. Let $p_{\lambda}(x) \in k_F[x]$ be its characteristic polynomial, and $p_{\lambda}(x) =  \prod_{i=1}^m p_i(x)$ be its factorization into irreducibles, with $p_i(x) \in k_F[x]$ having degree $d_i$.
Let also $\delta_i \in k_{d_i}$ be any root of $p_i(x)$, where $k_{d_i}$ denotes the unique 	extension of $k_F$ of degree $d_i$.
Letting $D_{\lambda} := \Res_{(k_F[x]/p_{\lambda}(x))/k_F} \mu_2$, we notice that we have $C_{\lambda} \cong D_{\lambda}(k_F)$. Then $C_{\lambda} \cong \prod_i \mu_2(F_i)$, where $F_i$ is the unique unramified extension of $F$ with degree $d_i$, and we let $c_i \in C_{\lambda}$ to be the element corresponding to $(0, \ldots, 0, \underbrace{1}_i, 0, \ldots, 0)$ in this isomorphism.

Let $\wt{ \lambda}$ be a fixed lift of $\lambda$. Then the factorization of $p_{\lambda}(x)$ induces a splitting of $(V,\langle \cdot, \cdot \rangle)$ into eigenspaces for the unitary operator $\wt {\lambda}$: \[ (V , \langle \cdot, \cdot \rangle) \cong \bigoplus_{i=1}^m  (E_i, \langle \cdot, \cdot \rangle_i) .
\]
where $E_i$ is the unramified extension of $E$ of degree $d_i$. In fact, this splitting holds at the integral level (as $\Cal{O}_E$-lattices), and upon reducing modulo $\varpi_E$, we denote by $\overline V$ the corresponding $k_F$-vector space with the symmetric form obtained upon reduction.

The non-degeneracy of the trace pairing implies that there exists $\eta_i \in F_i^{\times}$ (where we recall that $F_i$ is the unramified extension of $F$ of degree $d_i$) such that $\langle x, y \rangle_i = \Tr_{E_i/E} (\eta_i x \ol{y})$ where $\ol{y}$ is the conjugate of $y$ under the non-trivial involution in $\Gal(E_i/F_i)$.
Each $\eta_i$ is defined up to $N_{E_i/F_i} (E_i^{\times} )$, and thus we can identify $\eta_i \in F_i^{\times} / N_{E_i/F_i} (E_i^{\times} ) \cong k_F^{\times} / (k_F^{\times})^2$.

Let $\varepsilon_{\lambda}: C_{\lambda} \rightarrow \CC^{\times}$ be the character defined by \[ \varepsilon_{\lambda}(c_i) =   \leg{-1}{q}^{d_i(n-d_i)}  \cdot  \left( \prod_j (-1)^{d_j-1}  \right)^{d_i} \cdot \leg{\eta_i}{q^{d_i}}^{n} \cdot \leg{D}{q}^{d_i}, \] where $D$ is the discriminant of $\overline V$.

Letting $\lambda'$ be another stable functional in $\msf{V}_x$, we obtain as before $d_i', \eta_i', \delta_i'$ and $\varepsilon_{\lambda'}$.

Define the character $r_{\lambda,\lambda'}: C_{\lambda} \rightarrow \CC^{\times}$ by \[ r_{\lambda, \lambda'} (c_i) = \prod_{i \neq j}   \prod_{t=1}^{\gcd(d_i,d_j)} \leg{\delta_i - \delta_j^{q^{t-1}}}{q^{[d_i,d_j]}} /  \leg{\delta_i' - (\delta_j')^{q^{t-1}}}
{q^{[d_i,d_j]}}. \]

Our most general result is the following.
\begin{lemma}[Lemma \ref{lemmamostgeneral}] The epipelagic representations $\rho_{\lambda, \psi}$ and $\rho_{\lambda', \psi'}$ have the same Langlands parameter if and only if the following equation in $C_{\lambda}^{\vee}$ holds: \[ \varepsilon_{\lambda} - \varepsilon_{\lambda'} + r_{\lambda, \lambda'} = \psi' - \psi. \]
\end{lemma}

To simplify this and bring $\lambda$ itself into the picture, we show that $C_{\lambda}^{\vee} \cong H^1(k_F, D_{\lambda})$. 
The $k_F$-rational orbits of $\lambda$ within its stable orbit (which is the intersection of $\msf{V}_{x}(k)$ with the orbit of $\lambda$ under $\SO_{n}(\ol{k})$ in $\msf{V}_{x} \otimes \ol{k}$) are a torsor for $H^1(k_F, D_{\lambda})$, and therefore choosing a basepoint allows us to interpret $\lambda$ as an  element of $C_{\lambda}^{\vee}$.

Our basepoint is the Kostant-Weierstress section $\lambda_{KW}$: this is a subspace of $\msf{V}_x$ which projects isomorphically down to $\msf{V}_x // \mrm{SO}_{n}$.
Explicitly, the image of $\lambda_{KW}$ consists of matrices of the form 
\[
 \left\{\begin{pmatrix} 
a_1  & \ldots & a_{n-2} &  a_{n-1} & a_{n} \\ 
1   & & & &  a_{n-1} \\
 &1   & & &  a_{n-2}  \\
& & \ddots  & & \vdots \\
&  & & 1 & a_1
\end{pmatrix}  \right\}
\]
where all the unlabeled entries are $0$.

We obtain then the following result:
\begin{prop}[Proposition \ref{kostant identity}]
The element $\left[ \lambda - \lambda_{KW} \right]$ of $C_{\lambda}^{\vee}$ is the character defined by \[ c_i \mapsto  (-1)^{d_i-1} \leg{(-1)^{\lfloor d_i/2\rfloor}}{q}  \leg{\eta_i}{q^{d_i}} \prod_{j \neq i}\prod_{t=1}^{(d_i,d_j)} \leg{\delta_i - \delta_j^{q^{t-1}}}{q^{[d_i,d_j]}}. \]
\end{prop}

In case of odd $n$, we obtain considerable cancellations among the terms involved, as our last two results show.
\begin{thm}[Theorem \ref{unitary epi param}]
Consider $U_n$ with $n$ odd. The epipelagic representation $\rho_{\lambda, \psi}$ and $\rho_{\lambda', \psi'}$ lie in the same $L$-packet if and only if $[\lambda-\lambda']  = \psi'- \psi \in C_{\lambda}^{\vee}$.
\end{thm}

Let $z \in C_{\lambda}^{\vee}$ be the character defined by $z(c_i) = -1$ for all $i$, which corresponds to the diagonal matrix $\mrm{diag}(-1, \ldots, -1)$. 

\begin{cor}[Corollary \ref{special epi param}]
Consider $\SU_n$ with $n$ odd. The epipelagic representation $\rho_{\lambda, \psi}|_{\SU_n}$ and $\rho_{\lambda, \psi}|_{\SU_n}$ lie in the same $L$-packet if and only if 
\[
[\lambda-\lambda'] =  \psi' - \psi   \in (C_{\lambda} \cap \SU_n)^{\vee} \quad \text{or} \quad 
[\lambda-\lambda'] =  z + \psi' - \psi  \in (C_{\lambda} \cap \SU_n)^{\vee} .
\] 
\end{cor}

\subsection{Acknowledgments}

We thank Tasho Kaletha for his encouragement to write up this paper. We are also grateful to Tasho, Mark Reeder, and Beth Romano for helpful discussions. This document benefited from comments and corrections by Tasho Kaletha, Paul Levy, Beth Romano, and the anonymous referee. During the preparation of this paper, TF was supported by an NSF Graduate Fellowship.. 

\section{Notation}\label{sec: notation}

We collect here some notation which will be used frequently throughout the paper. 

\begin{itemize}
\item We fix a local field $F$, of residue characteristic $p>2$. We write $W_F$ for the Weil group of $F$, $\Gamma_F = \Gal(\ol{F}/F)$, $I_F \triangleleft \Gamma_F$ for the inertia subgroup, and $P_F \triangleleft I_F$ for the wild inertia subgroup. 

\item Let $E/F$ be a ramified quadratic extension. We will sometimes denote the Galois conjugation of $E$ over $F$ by $e \mapsto \ol{e}$. Let $\varpi_F$ be a uniformizer for $F$. For later convenience we choose $\varpi_E$ so that its conjugate over $F$ is precisely $-\varpi_E$. 

\item For $K$ a local field or finite field, we will often denote by $K_d$ the unique unramified extension of $K$ of degree $d$. 

\item We denote by $k_F$ the residue field of $F$, and similarly for other local fields. Fix a faithful additive character $\chi \co k_F \rightarrow \CC^{\times}$. 

\item For $T$ a torus in a reductive group $G$ over $F$, we denote by $R(T,G)$ the set of roots for $S$ in $G$, and $\Omega(T,G)$ the Weyl group of $G$ relative to $T$. 
\end{itemize}

Throughout the paper we will conflate notation for an algebraic group over $p$-adic field with that of its points. 

\section{Epipelagic representations}\label{section epipelagic representations} 

We want to define certain epipelagic representations for $\SU_n$ or $\mrm{U}_n$ over $F$. These representations are built out of induction from certain maximal compact subgroups of characters which define ``stable functionals in the Moy-Prasad filtration''. We recall the essentials of the definition and notation from \cite{RY14} \S 2.

\subsection{The Moy-Prasad filtration}\label{moy prasad filtration}
Let $G$ be a reductive group over $F$. To a point $x$ in the Bruhat-Tits building $\Cal{B}_G(F)$, Moy and Prasad \cite{MP94} attached a filtration $\{G_{x, r}: r \in \R\}$ of $G(F)$. Similarly, at the level of Lie algebras we have a Moy-Prasad filtration $\{\mf{g}_{x,r} \co r \in \R\}$ such that $\mf{g}_{x,r+1} = \varpi_F \mf{g}_{x,r}$ for all $r$. We do not recall the definition here; see \cite[\S 4]{RY14} for a reference. However we note that for $x \in \cal{B}_G(F)$, $G_{x,0}$ is the parahoric group attached to $x$ by Bruhat-Tits theory; it is contained with finite index inside the stabilizer $G_x$ of $x$.

We will take $x$ to be a point of $\Cal{B}_G(F)$ which becomes a hyperspecial vertex in $\Cal{B}_G(E)$, but is \emph{not} already a hyperspecial vertex in $\Cal{B}_G(F)$. Then $G_{x,0}= \Cal{G}(\Cal{O}_F)$ for the smooth $\Cal{O}_F$-model $\Cal{G}$ for $G$ coming from Bruhat-Tits theory. We denote $G_{x,r+ } := \bigcup_{s >r} G_{x,s}$. For $r > 0$, there is a canonical isomorphism $G_{x,r}/G_{x,r+} \cong \mf{g}_{x,r} / \mf{g}_{x,r+}$.

We will now specialize this discussion to $G=\mrm{U}_n$, the special unitary group associated with the extension $E/F$ defined by the standard hermitian product on the vector space $E^{\oplus n}$: 
\begin{equation}\label{hermitian form}
\langle (x_1, \ldots, x_n), (y_1,\ldots, y_n) \rangle = x_1 \ol{y}_n + x_2 \ol{y}_{n-1} +  \ldots + x_n \ol{y}_1. 
\end{equation}
Then the jumps in the Moy-Prasad filtration at $x$ occur at $\frac{1}{2} \Z$. We will also be interested in the group $\SU_n$, for which the analogous statements follow easily from the $\mrm{U}_n$ case. 

\begin{remark}\label{rem: MP filtration} In terms of the integral model $\Cal{G}$, we can think of $G_{x,r/2}$ as the congruence subgroup given by the kernel of reduction mod $\varpi_E^r$: 
\[
G_{x,r/2} = \ker \left(\Cal{G}(\Cal{O}_F ) \rightarrow \Cal{G}(\Cal{O}_F )  \mod{\varpi_E^r} \right)
\]
where the notation ``$\Cal{G}(\Cal{O}_F ) \mod{  \varpi_E^r}$'' means ``consider $\Cal{G}(\Cal{O}_F)$ as a subgroup of  $\cal{G} (\Cal{O}_E)$, and reduce mod $\varpi_E^r$''. A similar remark applies to the filtration on the Lie algebra. 
\end{remark}

\begin{remark}
The group $ G_{x,0}/G_{x,0+}$ acts by conjugation on each $G_{x,r}/G_{x,r+}$ and $\mf{g}_{x,r}/\mf{g}_{x,r+}$. We denote by  $\wt{\msf{G}}_x$ the algebraic group over $k_F$ underlying $G_{x,0}/G_{x,0+}$.
\end{remark}

We use the trace form to identify $\mf{g}$ with its dual, which descends to an identification $\mf{g}_{x,r} / \mf{g}_{x,r+} \cong (\mf{g}_{x,-r}/\mf{g}_{x,-r+})^{\vee}$.
In particular, we obtain an identification $\mf{g}_{x,1/2} / \mf{g}_{x,1} \cong (\mf{g}_{x,-1/2}/\mf{g}_{x,0})^{\vee}$. On the other hand, multiplication by $\varpi_F$	 also defines an isomorphism $\mf{g}_{x,-1/2}/\mf{g}_{x,0} \xrightarrow{\sim} 
\mf{g}_{x,1/2} / \mf{g}_{x,1}$. Thus, the trace form induces an isomorphism 
\begin{equation}\label{killing self dual}
\mf{g}_{x,1/2} / \mf{g}_{x,1}\cong (\mf{g}_{x,1/2} / \mf{g}_{x,1})^{\vee}
\end{equation}
given by the bilinear form 
\begin{equation}\label{functional duality convention}
X,Y \mapsto \Tr( \varpi_F^{-1} XY).
\end{equation}
This is evidently equivariant for the conjugation action of $G_{x,0}/G_{x,1/2} \cong \SO_n(k_F)$, where the isomorphism is to be proven in Lemma \ref{quotient orthogonal group}.

\subsection{Construction of epipelagic representations}\label{section construction epip reps}
 
 Assume now that $G$ is a tamely ramified quasi-split reductive group over $F$. For a point $x \in \Cal{B}_G(F)$, we denote by $r(x)$ the largest positive number for which $G_{x, r(x)} = G_{x,0+}$. In our case of interest, $r(x)=1/2$. 

\begin{defn}[{\cite[\S 2.5]{RY14}}]
An irreducible representation $\pi$ of $G(F)$ is \emph{epipelagic} if $\pi$ has depth $r(x)$ and a non-zero vector invariant under $G_{x, r(x)+}$. 
\end{defn}

We review the construction of epipelagic representations from \cite[\S 2]{RY14}. At this point we must note that \cite[\S 2]{RY14} is formulated under the hypothesis that $G$ is also \emph{semisimple}, which is the case for $\SU_n$ but not $\mrm{U}_n$. However, the proofs of the statements below are valid for reductive groups without any change to the proofs. (\cite{Kal15} is formulated in the generality that we work with here.) It would be possible to carry our the entire computation of this paper just for $\SU_n$, which is what we originally tried to do, but it is actually very useful at several points in the calculation to use that the theory extends to $\mrm{U}_n$.

 Let $\wt{\msf{G}}_x$ be the algebraic group over $k_F$ underlying $G_x/G_{x,r(x)}$ and $\msf{G}_x$ be the algebraic group over $k_F$ underlying $G_{x,0}/G_{x,r(x)}$. The paper \cite{RY14} is phrased using $\msf{G}_x$, but for our purposes it is more convenient to work with $\wt{\msf{G}}_x$. Let $\msf{V}_x$ the algebraic representation of $\wt{\msf{G}}_x$ over $k_F$ underlying $G_{x,1/2}/G_{x,1}$. 

To build an epipelagic representation, we need to start with a functional $\lambda$ on $\msf{V}_x$ which is \emph{stable} for the $\wt{\msf{G}}_x$ in the sense of geometric invariant theory (GIT), meaning that it has finite stabilizer and closed orbit. Then we inflate the composition $\chi \circ \lambda \co \msf{V}_x(\mf{f}) \rightarrow \CC^{\times}$ to a character $\chi_{\lambda}$ of $G_{x, r(x)}$, and consider the compact induction
\[
\pi_x(\lambda) := \ind_{G_{x,r(x)}}^{G(F)} \chi_{\lambda}.
\]

\begin{prop}[{\cite[Proposition 2.4]{RY14}}]\label{epipelagic representation}
The representation $\pi_x(\lambda)$ is a finite direct sum of irreducible epipelagic representations.
\end{prop}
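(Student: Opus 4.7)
The plan is to analyze the intertwining algebra $\mathcal{H} := \End_{G(F)}(\pi_x(\lambda))$ using a Mackey/Frobenius reciprocity decomposition, and then use each of the two defining properties of stability (closed orbit and finite stabilizer) in a distinct role. Since $\chi_\lambda$ is a character of the compact-open subgroup $K := G_{x,r(x)}$ and $\pi_x(\lambda) = \ind_K^{G(F)} \chi_\lambda$ is compactly induced, smoothness and admissibility of $\pi_x(\lambda)$ will reduce the question to showing that $\mathcal{H}$ is finite-dimensional and semisimple: a finite-dimensional semisimple Hecke algebra forces a finite direct sum of irreducibles by standard arguments.

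First I would express $\mathcal{H}$ as the space of $K$-biequivariant, $\chi_\lambda$-twisted functions on $G(F)$ supported on the set
\[
I(\chi_\lambda) = \{g \in G(F) : \chi_\lambda|_{K \cap gKg^{-1}} = {}^g\chi_\lambda|_{K \cap gKg^{-1}}\}.
\]
The main task is to show $I(\chi_\lambda) \subseteq G_x \cdot K$, where $G_x$ is the stabilizer of $x$ in $G(F)$, and that the image of $I(\chi_\lambda)$ in $G_x/K$ is finite. The first inclusion is where the \emph{closed orbit} portion of the stability hypothesis enters: an element $g \notin G_x \cdot K$ moves $x$ to a different point $gx$ of $\Cal{B}_G(F)$, and by Bruhat-Tits theory one can find a one-parameter subgroup (or, more intrinsically, look at the asymptotic behaviour along the geodesic from $x$ to $gx$) under which the associated functional on $\msf{V}_x$ would degenerate to $0$. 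Closedness of the $\wt{\msf{G}}_x$-orbit of $\lambda$ in $\msf{V}_x^\vee$, via the Hilbert--Mumford criterion in the form adapted to Bruhat--Tits buildings, rules this out. So any $g \in I(\chi_\lambda)$ normalises $x$ up to $K$.

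Once we are inside $G_x$, conjugation descends to the action of $G_x/K$ on the space of depth-$r(x)$ characters, which under the identification \eqref{killing self dual} corresponds to the action of $\wt{\msf{G}}_x(k_F)$ on $\msf{V}_x^\vee(k_F)$. Thus the image of $I(\chi_\lambda)$ in $G_x/K$ is precisely the stabiliser of $\lambda$ inside $G_x/K$, which by the \emph{finite stabilizer} half of the stability hypothesis is a finite group (it surjects onto a subgroup of the finite stabilizer of $\lambda$ in $\wt{\msf{G}}_x(k_F)$, with kernel the finite group $G_x/G_{x,0}$, since $G_{x,0}/K$ acts through $\wt{\msf{G}}_x(k_F)$). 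Therefore $\mathcal{H}$ is isomorphic to the twisted group algebra of this finite stabiliser by a $2$-cocycle, which is a finite-dimensional semisimple $\CC$-algebra. This gives the decomposition $\pi_x(\lambda) = \bigoplus_i \pi_i$ into a finite direct sum of irreducible smooth representations of $G(F)$.

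Finally I would verify the epipelagic property for each $\pi_i$. By Frobenius reciprocity, each $\pi_i^{K} \neq 0$, so in particular $\pi_i^{G_{x,r(x)+}} \neq 0$, and the depth of $\pi_i$ is at most $r(x)$. That the depth equals $r(x)$ reduces to showing that no $\pi_i$ has $G_{x,0+}$-fixed vectors (equivalently, that $\chi_\lambda$ is non-trivial, which is immediate because $\lambda \neq 0$, combined with the fact that there are no $K'$-fixed vectors for any parahoric $K'$ with $K \subsetneq K'$ — this follows from the stability of $\lambda$, which in particular precludes $\lambda$ from being annihilated by any parabolic subalgebra via Kempf--Rousseau). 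The main obstacle in the whole argument is the first step: translating the closed-orbit condition on $\lambda$ into the bound $I(\chi_\lambda) \subseteq G_x K$, since this requires passing between the geometry of the building and the GIT picture on $\msf{V}_x$. Everything else is formal.
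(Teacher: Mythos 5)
The paper does not supply its own proof of this statement; it cites \cite{RY14}, Proposition~2.4, verbatim. Your sketch correctly recovers the broad Reeder--Yu strategy: describe $\mathcal{H} = \End_{G(F)}(\pi_x(\lambda))$ via Mackey theory, use the closed-orbit half of stability (through Hilbert--Mumford transported to the building) to force $I(\chi_\lambda) \subseteq G_x K$, conclude $\mathcal{H}$ is finite-dimensional semisimple, and then check the epipelagic conditions on each constituent. That said, two local points deserve correction. First, the finiteness of the image of $I(\chi_\lambda)$ in $G_x/K$ does \emph{not} require the finite-stabilizer hypothesis: since $K = G_{x,r(x)} = G_{x,0+}$ is of finite index in $G_{x,0}$, which in turn is of finite index in $G_x$, the quotient $G_x/K$ is already finite as soon as one knows $I(\chi_\lambda) \subseteq G_x K$. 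The finite stabilizer enters elsewhere (e.g.\ in identifying $\mathcal{H}$ with a twisted group algebra of the finite group $C_\lambda$, or in controlling which characters $\psi$ can occur). Second, the depth verification at the end is incomplete as written: showing $\pi_i^{G_{x,0+}} = 0$ and ruling out fixed vectors for parahorics containing $K$ does not by itself yield $\mathrm{depth}(\pi_i) = r(x)$, because the infimum defining depth runs over \emph{all} points $y$ of the building, not only $x$ and the facets containing it. The correct step is to note that stability of $\lambda$ implies $\lambda$ is nondegenerate in the Moy--Prasad sense, so that $(G_{x,r(x)}, \chi_\lambda)$ is an unrefined minimal $K$-type; the Moy--Prasad theory of minimal $K$-types then forces the depth to equal $r(x)$, which is the argument carried out in \cite{RY14}. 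These are fixable inaccuracies rather than a wrong approach, but as written the proof of the depth claim has a genuine gap.
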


We can be a little more precise about the summands appearing in $\pi_x(\lambda)$. Let $H_{x,\lambda} \subset G_x$ be the stabilizer of the character $\chi_{\lambda}$ on $G_{x,r(x)}$. 
As explained in \cite[\S 2.1]{RY14}, Mackey Theory implies that $\ind_{G_{x,1/2}}^{H_{x,\lambda}} \chi_{\lambda}$ is a direct sum of representations $\chi_{\lambda, \psi}$, where $\psi$ indexes the representations of $H_{x,\lambda}/G_{x,1/2}$. We let 
\[
\rho_{\lambda, \psi} := \ind_{H_{x,\lambda}}^{G(F)} \chi_{\lambda, \psi}.
\]

\begin{remark}
Note that the notion of a stable vector for $\wt{\msf{G}}_x$ coincides with that for $\msf{G}_x$, since the latter is a finite-index subgroup of the former. (The condition of finite stabilizer is obviously unchanged by passing to finite index subgroups, and orbits break up into finite disjoint unions, so the condition of an orbit being closed is also unchanged.) 
\end{remark}

\begin{prop}[\cite{RY14}, Proposition 2.4]\label{irreducible epipelagic}
The representation $\rho_{\lambda, \psi} $ is epipelagic.
\end{prop}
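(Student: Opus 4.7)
The plan is to deduce Proposition \ref{irreducible epipelagic} from the previous proposition by combining a Clifford-theoretic decomposition with induction in stages, and then upgrading to genuine irreducibility via a Mackey intertwining argument that uses the stability of $\lambda$ in an essential way.

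First I would set up the Clifford decomposition on $H_{x,\lambda}$. By construction $G_{x,1/2}$ is a normal subgroup of $H_{x,\lambda}$, the quotient $H_{x,\lambda}/G_{x,1/2}$ is finite, and $H_{x,\lambda}$ stabilizes the character $\chi_\lambda$. Standard Clifford theory then produces an isotypic decomposition
\[
\ind_{G_{x,1/2}}^{H_{x,\lambda}} \chi_\lambda = \bigoplus_{\psi} \chi_{\lambda,\psi}
\]
in which each $\chi_{\lambda,\psi}$ is irreducible over $H_{x,\lambda}$, restricts to an isotypic multiple of $\chi_\lambda$ on $G_{x,1/2}$, and is indexed by irreducible representations $\psi$ of (a possibly projective cover of) $H_{x,\lambda}/G_{x,1/2}$. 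Applying induction in stages then yields
\[
\pi_x(\lambda) \;=\; \ind_{G_{x,1/2}}^{G(F)} \chi_\lambda \;=\; \bigoplus_\psi \ind_{H_{x,\lambda}}^{G(F)} \chi_{\lambda,\psi} \;=\; \bigoplus_\psi \rho_{\lambda,\psi}.
\]
Since Proposition \ref{epipelagic representation} already guarantees that $\pi_x(\lambda)$ is a finite direct sum of irreducible epipelagic representations, each $\rho_{\lambda,\psi}$ is at least a finite-length subrepresentation all of whose irreducible constituents are epipelagic; in particular the depth and the existence of a $G_{x,1/2+}$-fixed vector in $\rho_{\lambda,\psi}$ follow automatically, so the only remaining content is the \emph{irreducibility} of each summand $\rho_{\lambda,\psi}$.

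For irreducibility I would invoke the standard Mackey criterion: since $H_{x,\lambda}$ is open and compact modulo the center of $G(F)$, and $\chi_{\lambda,\psi}$ is irreducible over $H_{x,\lambda}$, the induced representation $\rho_{\lambda,\psi}$ is irreducible if and only if
\[
\Hom_{H_{x,\lambda} \cap g H_{x,\lambda} g^{-1}}\bigl(\chi_{\lambda,\psi},\; {}^g\chi_{\lambda,\psi}\bigr) = 0 \quad \text{for every } g \in G(F) \setminus H_{x,\lambda}.
\]
Restricting a hypothetical non-zero intertwiner to $G_{x,1/2} \cap g G_{x,1/2} g^{-1}$ forces $\chi_\lambda$ and ${}^g\chi_\lambda$ to agree there, which (through the identification $G_{x,1/2}/G_{x,1} \cong \msf{V}_x(k_F)$ and the duality of \eqref{killing self dual}) translates into a compatibility between the stable functional $\lambda$ and a translate $g \cdot \lambda$ on an appropriate Moy-Prasad quotient attached to a point of $\cal{B}_G(F)$ on the geodesic from $x$ to $g \cdot x$.

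The main obstacle, and where stability of $\lambda$ plays a decisive role, is this last building-theoretic step. The argument runs as follows: if $g \cdot x \neq x$, convexity of Moy-Prasad filtrations along geodesics (as developed in \cite{MP94}) forces $\chi_\lambda$ to descend to a non-zero functional on the Moy-Prasad quotient at an intermediate point, which exhibits $\lambda$ as a degeneration into a lower-dimensional orbit — contradicting the closed-orbit half of stability. Hence $g \in G_x$, and then the finite-stabilizer half of stability plus the very definition of $H_{x,\lambda}$ forces $g \in H_{x,\lambda}$, contradicting our choice of $g$. Once this dichotomy is established, the vanishing of the intertwining space for $g \notin H_{x,\lambda}$ is automatic, and we conclude that each $\rho_{\lambda,\psi}$ is irreducible and, being a constituent of $\pi_x(\lambda)$, is epipelagic.
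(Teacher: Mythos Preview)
The paper does not give its own proof of this proposition: it is stated with a bare citation to \cite{RY14}, Proposition~2.4, and no argument is supplied. So there is nothing in the paper to compare your proposal against beyond the observation that the authors are content to defer entirely to Reeder--Yu.

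Your sketch is a reasonable reconstruction of the Reeder--Yu argument itself. The Clifford decomposition on $H_{x,\lambda}$, induction in stages, and the reduction of irreducibility to a Mackey intertwining statement are all correct and standard. The substantive step---showing that any $g \in G(F)$ intertwining $\chi_\lambda$ with ${}^g\chi_\lambda$ on $G_{x,1/2} \cap gG_{x,1/2}g^{-1}$ must already lie in $H_{x,\lambda}$---is indeed where stability enters, and your description of the mechanism (degeneration of $\lambda$ along the geodesic from $x$ to $g\cdot x$ contradicting the closed-orbit condition, then finite stabilizer forcing $g \in H_{x,\lambda}$ once $g \in G_x$) matches the shape of the argument in \cite{RY14}.

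One small presentational point: appealing to Proposition~\ref{epipelagic representation} to get the depth and $G_{x,r(x)+}$-fixed vector is mildly circular here, since in this paper both Propositions~\ref{epipelagic representation} and~\ref{irreducible epipelagic} are citations of the \emph{same} result in \cite{RY14}. In practice this is harmless---once irreducibility is established, the depth and fixed-vector conditions are immediate from the fact that $\rho_{\lambda,\psi}$ is compactly induced from a character of $H_{x,\lambda} \supset G_{x,r(x)}$ that is trivial on $G_{x,r(x)+}$---but you may as well say that directly rather than route through the other proposition.
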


We note that the epipelagic representations obtained from $\lambda$ depend only on the conjugacy class of $\lambda$. The following lemma is undoubtedly well-known, but we  did not find its statement in the literature.

\begin{lemma}\label{lem: isom iff rat conj}
If $\lambda$ and $\lambda' $ are stable functionals (in the GIT sense) on $\msf{V}_x$ which are conjugate under $\wt{\msf{G}}_x$, then $\pi_x(\lambda) \cong \pi_x(\lambda')$. Conversely, if there exist $\psi, \psi'$ such that $\rho_{\lambda,\psi} \cong \rho_{\lambda', \psi'}$, then $\lambda$ is conjugate under $\wt{\msf{G}}_x$ to $\lambda'$. 
\end{lemma}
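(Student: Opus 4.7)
The plan is to prove the forward direction by a direct conjugation argument, and the reverse direction by Frobenius reciprocity combined with a rigidity argument coming from Moy-Prasad theory. For the forward direction, I would lift $\bar g \in \wt{\msf{G}}_x(k_F)$ satisfying $\bar g \cdot \lambda = \lambda'$ to an element $g \in G_x$ via the natural surjection $G_x \twoheadrightarrow G_x/G_{x,r(x)} = \wt{\msf{G}}_x(k_F)$. Conjugation by $g$ preserves $G_{x,1/2}$, sends $\chi_\lambda$ to $\chi_{\lambda'}$, and takes $H_{x,\lambda}$ bijectively to $H_{x,\lambda'}$; it therefore carries each extension $\chi_{\lambda,\psi}$ to some $\chi_{\lambda',\psi'}$. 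Since compact induction is invariant under conjugation by elements of the ambient group, this yields $\rho_{\lambda,\psi} \cong \rho_{\lambda',\psi'}$; as $\psi$ varies, $\psi'$ ranges over all characters of $H_{x,\lambda'}/G_{x,1/2}$, giving the desired equality of sets.

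For the reverse direction, assume $\rho_{\lambda,\psi} \cong \rho_{\lambda',\psi'}$. Frobenius reciprocity for compact induction gives
\[
\Hom_{G(F)}(\rho_{\lambda,\psi}, \rho_{\lambda',\psi'}) = \Hom_{H_{x,\lambda}}\bigl(\chi_{\lambda,\psi}, \rho_{\lambda',\psi'}|_{H_{x,\lambda}}\bigr) \ne 0,
\]
and decomposing the right-hand side via Mackey's formula produces an element $g \in G(F)$ such that $\chi_{\lambda,\psi}$ and its $g$-conjugate ${}^g\chi_{\lambda',\psi'}$ coincide on $H_{x,\lambda} \cap g H_{x,\lambda'} g^{-1}$. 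Restricting further to pro-$p$ parts, $\chi_\lambda$ and ${}^g\chi_{\lambda'}$ then agree on $G_{x,1/2} \cap g G_{x,1/2} g^{-1}$.

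The hard part will be to conclude that this $g$ may be chosen inside $G_x$, at which point its reduction $\bar g \in \wt{\msf{G}}_x(k_F)$ satisfies $\bar g \cdot \lambda' = \lambda$ and we are done. This is where the stability hypothesis enters essentially: stable functionals give rise to characters of rigid depth, and in particular the set of \emph{optimal vertices} of the epipelagic representation $\rho_{\lambda,\psi}$ (those $y \in \Cal{B}_G(F)$ where the minimal depth $r(x)$ is attained on $G_{y, r(x)+}$-fixed vectors) forms a single $G(F)$-orbit, already containing $x$. Consequently any $G(F)$-intertwiner between two realizations based at the same vertex $x$ may be adjusted to fix $x$, yielding $g \in G_x$ and hence the required $\bar g$. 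I anticipate that making this last reduction fully precise will be the most delicate step of the proof, since it requires carefully combining the Moy-Prasad structure with the defining properties (finite stabilizer and closed orbit) of a stable functional.
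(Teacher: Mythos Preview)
Your forward direction is correct and is essentially the paper's argument: both amount to observing that a lift $g\in G_x$ of the conjugating element carries $(H_{x,\lambda},\chi_{\lambda,\psi})$ to $(H_{x,\lambda'},\chi_{\lambda',\psi'})$, hence identifies the compactly induced representations.

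For the converse, the paper does not reprove anything: it simply invokes \cite{Kal15}~Fact~3.8. Your Mackey/Frobenius setup is exactly the right starting point, and the statement you are aiming for---that the intertwining element $g$ may be taken in $G_x$---is precisely the content behind that citation (and behind \cite{RY14}~Proposition~2.4). However, your proposed justification via ``optimal vertices forming a single $G(F)$-orbit'' is not quite how the argument runs, and as stated it is circular: knowing that the optimal points form one orbit is essentially equivalent to the intertwining statement you want. The actual mechanism is more direct and uses stability through the Hilbert--Mumford criterion. One shows (as in \cite{RY14}~Lemma~2.3) that if $g\notin G_x$, then the segment $[x,gx]$ in the building produces a one-parameter subgroup of $\msf{G}_x$ destabilizing $\lambda$, contradicting stability; hence any $g$ contributing a nonzero Mackey term already lies in $G_x$. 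Once $g\in G_x$, its image $\bar g\in\wt{\msf{G}}_x(k_F)$ conjugates $\lambda'$ to $\lambda$, as you say. So your outline is salvageable, but you should replace the optimal-vertex heuristic with the Hilbert--Mumford/destabilizing-cocharacter argument from \cite{RY14}, or else cite \cite{Kal15}~Fact~3.8 as the paper does.
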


\begin{proof}
Let $H_{\lambda}$ (resp. $H_{\lambda'}$) be the stabilizer of $\lambda$ (resp. $\lambda'$) in $G_x$, and $\chi_{\lambda}$ (resp. $\chi_{\lambda'}$) the character of $H_{\lambda}$ (resp. $H_{\lambda'}$) used to induce $\rho_{\lambda, \psi}$ (resp. $\rho_{\lambda', \psi'}$). By Mackey Theory, we have $\rho_{\lambda,\psi} \cong \rho_{\lambda', \psi'}$ if and only if 
\[
\bigoplus_{s \in H_{\lambda'} \backslash G(F) / H_{\lambda}} \Hom_{H_{\lambda}} (\chi_{\lambda}, \Ind_{s^{-1} H_{\lambda'} s \cap  H_{\lambda}}^{H_{\lambda}} \chi_{\lambda'}^s ) \neq 0.
\]
Hence if the stable functionals are conjugate then there exists $s \in \wt{\msf{G}}_x$ such that $s^{-1} H_{\lambda'} s = H_{\lambda}$ and $\chi_{\lambda'}^s = \chi_{\lambda}$. 

The converse follows from \cite[Fact 3.8]{Kal15}. 
\end{proof}

\begin{remark}
We see from the proof of Lemma \ref{lem: isom iff rat conj} that it makes sense to identify $C_{\lambda}$ and $C_{\lambda'} $ for rationally conjugate $\lambda$ and $\lambda'$, and thus to compare $\psi \in C_{\lambda}$ and $\psi' \in C_{\lambda'} := H_{\lambda}/G_{x,1/2}$.
Notice, indeed, that if $s, \widehat s \in \wt{\msf{G}}_x$ are such that $\chi_{\lambda'}^s = \chi_{\lambda} = \chi_{\lambda'}^{\widehat s}$, then $\widehat s^{-1} s$ fixes $\chi_{\lambda}$ and thus belongs to $H_{\lambda}$.
By the discussion following Lemma \ref{symmetric square rep} below, $C_{\lambda} = H_{\lambda} / G_{x, 1/2}$ is abelian, and thus computing in $H_{\lambda} / G_{x, 1/2} \subset G_x / G_{x, 1/2}$ gives that \[ \forall h_{\lambda} \in H_{\lambda} / G_{x, 1/2}, \quad \widehat s^{-1} s h_{\lambda} \left( \widehat s^{-1} s \right)^{-1} = h_{\lambda} \Rightarrow s h_{\lambda} s^{-1} = \widehat s h_{\lambda} \widehat s^{-1} \in H_{\lambda'} / G_{x, 1/2} = C_{\lambda'}. \]
\end{remark}

We now specialize this discussion to our case of interest to construct certain epipelagic representations. The first task is to find some stable functionals, so we need to identify the representation in question.

\begin{lemma}\label{quotient orthogonal group}
For $G = \mrm{U}_n$ and the choice of $x$ as above, we have $\wt{\msf{G}}_x \cong \mrm{O}_{n}$, the split orthogonal group, and $\msf{G}_x \cong \mrm{SO}_{n}$.
\end{lemma}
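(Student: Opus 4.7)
The plan is to realize $x$ by an explicit lattice and compute the reduction directly. Take the lattice $L = \Cal{O}_E^{\oplus n} \subset E^{\oplus n}$, and let $\Cal{G}^{\mrm{st}}$ denote the smooth $\Cal{O}_F$-group scheme whose $\Cal{O}_F$-points are the elements of $\mrm{U}_n(F)$ stabilizing $L$. Then $G_x = \Cal{G}^{\mrm{st}}(\Cal{O}_F)$, and the parahoric $G_{x,0}$ is the subgroup of $\Cal{G}^{\mrm{st}}(\Cal{O}_F)$ landing in the neutral component of the special fiber. The key observation is that because $E/F$ is totally ramified, $k_E = k_F$ and Galois conjugation on $E$ reduces to the identity on the residue field. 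Hence reducing the hermitian form \eqref{hermitian form} modulo $\varpi_E$ yields the split symmetric bilinear form on $L/\varpi_E L \cong k_F^{\oplus n}$ with matrix $J$ (the anti-diagonal identity), and reduction mod $\varpi_E$ defines a homomorphism
\[
\Cal{G}^{\mrm{st}}(\Cal{O}_F) \longrightarrow \mrm{O}(J)(k_F) \cong \mrm{O}_n(k_F),
\]
whose kernel is precisely $G_{x,1/2}$ by Remark \ref{rem: MP filtration}.

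To prove surjectivity of this map I would split into components. Smoothness of $\Cal{G}^{\mrm{st}}$ together with the smoothness and connectedness of $\mrm{SO}_n$ allow any element of $\mrm{SO}_n(k_F)$ to be lifted via Hensel's lemma, so the image contains the identity component. To hit the other component I would exhibit an explicit lift: the rational permutation matrix $g$ that interchanges $e_1$ and $e_n$ and fixes $e_2, \ldots, e_{n-1}$ visibly preserves $L$ and a short check against \eqref{hermitian form} shows it preserves the hermitian form, while its reduction is an orthogonal reflection of determinant $-1$. Together with the kernel calculation this gives $\wt{\msf{G}}_x = G_x/G_{x,1/2} \cong \mrm{O}_n$.

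Finally, since $\msf{G}_x = G_{x,0}/G_{x,1/2}$ is the reductive quotient of the neutral component of the special fiber of $\Cal{G}^{\mrm{st}}$, it must be the identity component of $\mrm{O}_n$, namely $\mrm{SO}_n$. The main obstacle will be justifying cleanly that the parahoric $G_{x,0}$ really is the preimage of $\mrm{SO}_n(k_F)$ under the above reduction; I would establish this either by appealing to the standard Bruhat–Tits classification of parahoric group schemes at vertices of a ramified quasi-split unitary group, or equivalently by computing the Kottwitz homomorphism for $\mrm{U}_n$ over $F$ and verifying directly that $G_x/G_{x,0}$ has order $2$, matched against $\pi_0(\mrm{O}_n) = \Z/2$ via the explicit permutation lift above.
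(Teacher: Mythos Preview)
Your proposal is correct and follows essentially the same approach as the paper's own proof: both reduce the hermitian form modulo $\varpi_E$, use that $E/F$ is ramified so conjugation becomes trivial on the residue field, and thereby identify $G_x/G_{x,1/2}$ with the orthogonal group of the split form $J$, with $G_{x,0}/G_{x,1/2}$ being the identity component $\mrm{SO}_n$. The paper's proof is considerably terser---it simply asserts the identification via Remark~\ref{rem: MP filtration} without spelling out surjectivity or the parahoric--versus--stabilizer distinction---whereas you supply those details (Hensel lifting for $\mrm{SO}_n$, an explicit reflection to hit the non-identity component, and the Kottwitz-homomorphism argument for $[G_x:G_{x,0}]=2$); this extra care is welcome but not a genuinely different route.
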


\begin{proof}
Using the description in Remark \ref{rem: MP filtration}, we may identify $G_{x}/G_{x,1/2}$ with the group of automorphisms of $k_E = k_F$ preserving the split quadratic form 
\begin{equation}\label{eqn: orthogonal form}
\langle (\ol{x}_1, \ldots, \ol{x}_n), (\ol{y}_1,\ldots, \ol{y}_n) \rangle = \ol{x}_1 \ol{y}_n + \ol{x}_2 \ol{y}_{n-1} +  \ldots +\ol{x}_n \ol{y}_1. 
\end{equation}
It only remains to note that $G_{x,0}/G_{x,1/2} $ is the connected component.
\end{proof}

\begin{lemma} \label{symmetric square rep}
For the choice of $x$  as above, we have $\msf{V}_x \cong  \Sym^2(\mrm{Std})$ as representations of $\mrm{O}_n$. 
\end{lemma}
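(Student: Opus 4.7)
The strategy is to unwind the definition of $\msf{V}_x = \mf{g}_{x,1/2}/\mf{g}_{x,1}$ in explicit matrix terms, and then match it with the standard matrix model for $\Sym^2(\mrm{Std})$.

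First, using Remark \ref{rem: MP filtration} (and the Cayley-type identification $G_{x,1/2}/G_{x,1} \cong \mf{g}_{x,1/2}/\mf{g}_{x,1}$), I would realize $\mf{g}_{x,1/2}$ as the set of $X \in \mf{u}_n \cap \varpi_E \Mat_n(\Cal{O}_E)$, so that $\mf{g}_{x,1/2}/\mf{g}_{x,1} = (\mf{u}_n \cap \varpi_E \Mat_n(\Cal{O}_E))/(\mf{u}_n \cap \varpi_E^2 \Mat_n(\Cal{O}_E))$. Writing $X = \varpi_E Y$ with $Y \in \Mat_n(\Cal{O}_E)$, the condition $X^* = -X$ (where $*$ denotes the adjoint with respect to the hermitian form \eqref{hermitian form}, given by $X^* = J\bar{X}^T J$ with $J$ the antidiagonal matrix of ones) translates, using the key choice $\bar{\varpi}_E = -\varpi_E$, into the condition $J \bar Y^T J = Y$.

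Next, reducing mod $\varpi_E$ and using that $k_E = k_F$ since $E/F$ is ramified (so the Galois conjugation on $\Cal{O}_E/\varpi_E$ is trivial), I get
\[
\msf{V}_x \;\cong\; \{Y \in \Mat_n(k_F) : Y = J Y^T J\}.
\]
Now I would observe that this is precisely the space of self-adjoint endomorphisms of $\mrm{Std} = k_F^n$ equipped with the symmetric bilinear form \eqref{eqn: orthogonal form} (whose Gram matrix is $J$): indeed, self-adjointness of $Y$ reads $Y^T J = J Y$, which, since $J^2 = I$, is equivalent to $Y = J Y^T J$.

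The final step is to identify the space of self-adjoint operators with $\Sym^2(\mrm{Std})$ as representations of $\mrm{SO}_n$. This is a standard piece of invariant theory: the symmetric bilinear form induces an $\mrm{SO}_n$-equivariant isomorphism $\mrm{Std} \xrightarrow{\sim} \mrm{Std}^\vee$, which in turn gives $\End(\mrm{Std}) \cong \mrm{Std}\otimes \mrm{Std}$; the subspace of self-adjoint operators corresponds exactly to the symmetric tensors $\Sym^2(\mrm{Std})$. Finally I would verify that the $\msf{G}_x(k_F) = \mrm{SO}_n(k_F)$-action on $\msf{V}_x$ given by conjugation (as noted before Remark \ref{rem: MP filtration}) matches the natural action on self-adjoint operators, which is automatic from the formula $Y \mapsto gYg^{-1}$ for $g \in \mrm{SO}_n$.

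The only genuine obstacle is bookkeeping: one must carefully track the effect of the choice $\bar{\varpi}_E = -\varpi_E$ on the hermitian condition, since a less fortunate normalization would produce skew-self-adjoint (hence $\wedge^2(\mrm{Std})$) instead of self-adjoint operators. Everything else is either unwinding definitions or invoking the standard equivariant identification $\Sym^2(\mrm{Std}) \cong \{\text{self-adjoint operators}\}$.
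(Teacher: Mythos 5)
Your proof is correct and follows essentially the same route as the paper: you invoke Remark~\ref{rem: MP filtration} to write $\mf{g}_{x,1/2}/\mf{g}_{x,1}$ in matrix terms, use the normalization $\ol{\varpi}_E = -\varpi_E$ to convert the anti-hermitian Lie-algebra condition into self-adjointness with respect to the residual symmetric form, and then identify self-adjoint operators with $\Sym^2(\mrm{Std})$. The only difference is presentational: the paper states this compactly while you unroll the matrix algebra explicitly, which makes the sign-flip mechanism (and the warning about accidentally landing on $\wedge^2$) clearer.
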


\begin{proof}
As a representation of $\mrm{O}_n$, we view $\Sym^2(\mrm{Std})$ as the space of $n \times n$ self-adjoint matrices (with respect to the form \eqref{eqn: orthogonal form}) over $k_E = k_F$, with the conjugation action of $\mrm{O}_n$. Using Remark \ref{rem: MP filtration}, we can view $\msf{V}_x $ as the space of $n \times n$ matrices over $k_E$ preserving the form \eqref{hermitian form}. Since the conjugation of $E$ over $F$ induces multiplication by $-1$ on 
\[
\varpi_E \Cal{O}_E / \varpi_E^2 \Cal{O}_E \cong \Cal{O}_E/\varpi_E = k_E 
\]
(cf. \S \ref{sec: notation}) the condition of preserving the form \eqref{hermitian form} translates to the self-adjointess condition $-A^{\dagger} + A = 0$.
\end{proof}

Since \eqref{killing self dual} furnishes an identification $\check{\msf{V}}_x \cong \msf{V}_x$ a stable functional on $\msf{V}_x$ is the same as a stable vector in $\msf{V}_x$. Then $\lambda \in \msf{V}_x$ is stable if and only if it is regular semisimple when viewed as a self-adjoint matrix (cf. \cite[\S 6]{BG14} for this statement, although this was undoubtedly known long ago). Abusing notation, we also let $\lambda$ denote the functional on $V_{x,1/2}$ corresponding to $\lambda$ under the trace form, and $\chi_\lambda$ the corresponding character of $G_{x,1/2}$.

The epipelagic representations associated to $\chi_{\lambda}$ are summands of $\ind_{G_{x,1/2}}^{G(F)} \chi_{\lambda}$. To parametrize them explicitly, we identify the stabilizer of $\chi_{\lambda}$ in $G_x$. Viewing $\lambda \in \Sym^2 \msf{V}_x$ as a self-adjoint matrix, the regularity of $\lambda$ implies that any $n \times n$ matrix commuting with $\lambda$ is a polynomial in $\lambda$. If $p_{\lambda}(x)$ denotes the characteristic polynomial of $\lambda$, the space of matrices which can be expressed as a polynomial in $\lambda$ is $k_F[x]/p_{\lambda}(x)$. Since any such matrix is self-adjoint, it is furthermore orthogonal if and only if it squares to $1$. Hence the stabilizer of $\lambda$ within $\mrm{O}_n$ is the group scheme
\[
D_{\lambda} := \Res_{(k_F[x]/p_{\lambda}(x))/k_F} \mu_2.
\]
We let $C_{\lambda} = D_{\lambda}(k_F)$: this is the stabilizer of $\lambda$ in $\wt{\msf{G}}_x(k_F)$. This discussion shows that $H_{x, \lambda}$ fits into an extension
\[
0 \rightarrow G_{x,1/2} \rightarrow H_{x, \lambda} \rightarrow  C_{\lambda} \rightarrow 0.
\]
This sequence admits a splitting since $G_{x,1/2}$ is pro-$p$ while $|C_{\lambda}|$ is a power of $2$ (Schur-Zassenhaus Theorem), hence we can write 
\[
H_{x, \lambda} = G_{x,1/2} \rtimes  C_{\lambda}.
\]
Since $C_{\lambda}$ acts trivially on $\lambda$, we may extend $\chi_{\lambda} $ to a character of the semidirect product $H_{x, \lambda}$. The possible extensions are parametrized precisely by the characters of $C_{\lambda}$: for any character $\psi$ of $C_{\lambda}$, we have a character $\chi_{\lambda} \cdot \psi$ of $H_{x, \lambda}$ defined by 
\begin{equation} \label{eqn: extension of chi}
(\chi_{\lambda} \cdot \psi)(g,c) = \chi_{\lambda}(g) \psi(c).
\end{equation}

Now we can describe the components of $\ind_{G_{x,1/2}}^{G(F)} \chi_{\lambda}$, according to Proposition \ref{irreducible epipelagic}. 

\begin{lemma}
We have 
\[
\ind_{G_{x,1/2}}^{ H_{x,\lambda}} \chi_{\lambda} \cong \bigoplus_{\psi  \in\wh{C}_{\lambda}} \chi_{\lambda} \cdot \psi.
\] 
\end{lemma}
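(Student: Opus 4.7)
The plan is to apply Frobenius reciprocity together with a dimension count; this is a standard piece of Clifford theory for the semidirect-product situation, exploiting that the characters $\chi_{\lambda}\cdot \psi$ all restrict to the same character $\chi_{\lambda}$ on $G_{x,1/2}$.

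First I would compute the dimension of the left-hand side. Since $\chi_{\lambda}$ is one-dimensional and $[H_{x,\lambda}:G_{x,1/2}]=|C_{\lambda}|$, we have
\[
\dim\, \ind_{G_{x,1/2}}^{H_{x,\lambda}}\chi_{\lambda} \;=\; |C_{\lambda}|.
\]

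Next, for each $\psi\in\wh{C}_{\lambda}$, the extension $\chi_{\lambda}\cdot \psi$ restricts to $\chi_{\lambda}$ on $G_{x,1/2}$ by its very definition $(\chi_{\lambda}\cdot \psi)(g,c)=\chi_{\lambda}(g)\psi(c)$. Frobenius reciprocity then gives
\[
\Hom_{H_{x,\lambda}}\!\bigl(\chi_{\lambda}\cdot \psi,\; \ind_{G_{x,1/2}}^{H_{x,\lambda}}\chi_{\lambda}\bigr)
\;\cong\; \Hom_{G_{x,1/2}}\!\bigl(\chi_{\lambda},\chi_{\lambda}\bigr)\;\cong\;\CC,
\]
so each $\chi_{\lambda}\cdot\psi$ occurs in the induction with multiplicity exactly one.

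It remains to see that the characters $\{\chi_{\lambda}\cdot\psi\}_{\psi\in\wh{C}_{\lambda}}$ are pairwise non-isomorphic: if $\chi_{\lambda}\cdot\psi \cong \chi_{\lambda}\cdot\psi'$, then $\psi/\psi'$ is trivial on $\{e\}\rtimes C_{\lambda}\subset H_{x,\lambda}$, forcing $\psi=\psi'$. There are $|\wh{C}_{\lambda}|=|C_{\lambda}|$ such characters (using that $C_{\lambda}$ is abelian, being the $k_F$-points of $\Res_{(k_F[x]/p_{\lambda})/k_F}\mu_2$), and summing their dimensions recovers the total dimension $|C_{\lambda}|$ computed in Step~1. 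Hence the isotypic decomposition already accounts for the entire induced representation, proving the stated isomorphism.

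There is no serious obstacle here — the statement is essentially forced by Frobenius reciprocity and the fact that $H_{x,\lambda}/G_{x,1/2}=C_{\lambda}$ is a finite abelian group acting trivially on the character $\chi_{\lambda}$. The only point requiring any care is the observation that the Schur-Zassenhaus splitting chosen just before the lemma makes the formula $(\chi_{\lambda}\cdot\psi)(g,c)=\chi_{\lambda}(g)\psi(c)$ well-defined as a character of $H_{x,\lambda}$, which has already been established in the discussion preceding the lemma.
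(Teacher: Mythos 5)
Your proof is correct and follows essentially the same route as the paper's: Frobenius reciprocity shows each $\chi_{\lambda}\cdot\psi$ embeds into the induction, and a dimension count together with the pairwise distinctness of these characters forces the decomposition. You spell out the dimension computation and the distinctness argument a bit more explicitly than the paper does, but there is no substantive difference.
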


\begin{proof}
By Frobenius reciprocity, for each character $\psi$ of $C_{\lambda}$ we have 
\[
\Hom_{H_{x,\lambda}}(\chi_{\lambda} \cdot \psi, \ind_{G_{x,1/2}}^{ H_{x,\lambda}} \chi_{\lambda}) = \Hom_{G_{x,1/2}}(\chi_{\lambda}, \chi_{\lambda}) 
\]
so every character of the form $\chi_{\lambda} \cdot \psi$ can be embedded into $\ind_{G_{x,1/2}}^{H_{x,\lambda_T}} \chi_{\lambda}$. Since these characters are all distinct, dimension-counting shows that they must fill up the entire induced representation, so we deduce the result. 
\end{proof}

\begin{cor}
The (irreducible) epipelagic representations corresponding to $\lambda$ are the representations 
 $\ind_{H_{x,\lambda}}^{G(F)} \chi_{\lambda} \cdot \psi$ as $\psi$ ranges over characters of $C_{\lambda}$.
 \end{cor}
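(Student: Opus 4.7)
The plan is to deduce the corollary formally from the preceding lemma, Proposition \ref{epipelagic representation}, and Proposition \ref{irreducible epipelagic}, via induction in stages. First I would apply induction in stages to rewrite
\[
\pi_x(\lambda) \;=\; \ind_{G_{x,1/2}}^{G(F)} \chi_{\lambda} \;\cong\; \ind_{H_{x,\lambda}}^{G(F)} \Bigl( \ind_{G_{x,1/2}}^{H_{x,\lambda}} \chi_{\lambda} \Bigr).
\]
The previous lemma decomposes the inner induction as $\bigoplus_{\psi \in \wh{C}_{\lambda}} \chi_{\lambda} \cdot \psi$. Since compact induction commutes with finite direct sums, I obtain
\[
\pi_x(\lambda) \;\cong\; \bigoplus_{\psi \in \wh{C}_{\lambda}} \ind_{H_{x,\lambda}}^{G(F)} (\chi_{\lambda} \cdot \psi).
\]

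Next, I would match this decomposition with the construction of the $\rho_{\lambda,\psi}$: by definition, each summand $\ind_{H_{x,\lambda}}^{G(F)} (\chi_{\lambda} \cdot \psi)$ is exactly $\rho_{\lambda,\psi}$, and by Proposition \ref{irreducible epipelagic} each such summand is an irreducible epipelagic representation. Conversely, Proposition \ref{epipelagic representation} says that every irreducible epipelagic summand associated to $\lambda$ appears in $\pi_x(\lambda)$, so the list $\{\rho_{\lambda,\psi}\}_{\psi \in \wh{C}_{\lambda}}$ exhausts all irreducible epipelagic representations arising from $\lambda$.

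There is essentially no hard step here; the argument is purely formal once the preceding lemma is in hand. The only point requiring a moment of care is checking that the rewriting via induction in stages is legitimate at the level of compactly induced representations (which is standard, since $G_{x,1/2}$ is open and of finite index inside $H_{x,\lambda}$, which is in turn open in $G(F)$). I would not try to prove pairwise non-isomorphism of the $\rho_{\lambda,\psi}$ here, since the corollary only asserts that these representations constitute the epipelagic representations associated to $\lambda$, not that they are all distinct.
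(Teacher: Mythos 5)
Your proof is correct and is essentially the intended (unwritten) argument: the paper omits the proof precisely because it is the formal deduction you give, combining transitivity of induction with the preceding lemma and Propositions \ref{epipelagic representation} and \ref{irreducible epipelagic}. One small caution in your wording: Proposition \ref{epipelagic representation} does not literally say that every epipelagic representation associated to $\lambda$ appears in $\pi_x(\lambda)$; rather, the phrase ``epipelagic representations corresponding to $\lambda$'' is being \emph{defined} as the irreducible summands of $\pi_x(\lambda)$, so once you have the decomposition $\pi_x(\lambda)\cong\bigoplus_\psi\rho_{\lambda,\psi}$ with each $\rho_{\lambda,\psi}$ irreducible (which follows since ``epipelagic'' is defined to include irreducibility), the corollary is immediate and no separate ``conversely'' step is needed.
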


Let $\rho_{\lambda, \psi} = \ind_{H_{x,\lambda}}^{G(F)} \chi_{\lambda} \cdot \psi$. Now we note that the same discussion applies for $\SU_n$, using the same $\lambda$ to produce an epipelagic representation. The following Lemma says that the restriction of $\rho_{\lambda, \psi}$ to $\SU_n(F)$ is already irreducible, so that it necessarily coincides with an epipelagic representation of $\SU_n(F)$ built from $\lambda$. 

\begin{lemma}\label{SU_n restriction}
Each representation $\rho_{x, \lambda}|_{\SU_n(F)}$ is already irreducible, and we have 
\[
\rho_{x, \lambda}|_{\SU_n(F)} \approx \rho_{x, \lambda'}|_{\SU_n(F)}
\]
if and only if $\lambda$ and $\lambda'$ agree on $\SU_n(F)\cap C_{\lambda}$. 
\end{lemma}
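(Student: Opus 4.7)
The plan is to apply Mackey's formula for restriction of a compactly induced representation. First I analyze the double coset space $\SU_n(F) \backslash \mrm{U}_n(F) / H_{x,\lambda}$, which via the determinant map $\det \co \mrm{U}_n(F) \to E^1$ (with kernel $\SU_n(F)$) corresponds to $E^1 / \det(H_{x,\lambda})$. Using the semidirect product $H_{x,\lambda} = G_{x,1/2} \rtimes C_\lambda$, the image $\det(G_{x,1/2})$ lies in (and surjects onto) $E^1 \cap (1 + \varpi_E \Cal{O}_E)$ because elements of $G_{x,1/2}$ reduce to the identity modulo $\varpi_E$, while $\det(C_\lambda)$ contains $-1$ whenever $p_\lambda$ has a factor of odd degree, which holds automatically when $n$ is odd via the element $-I \in C_\lambda$ of determinant $(-1)^n$. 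Combined with the decomposition $E^1 = \{\pm 1\} \cdot (E^1 \cap (1 + \varpi_E \Cal{O}_E))$, this gives $\det(H_{x,\lambda}) = E^1$, and hence $\mrm{U}_n(F) = \SU_n(F) \cdot H_{x,\lambda}$. Mackey's formula then yields
\[
\rho_{\lambda,\psi}|_{\SU_n(F)} \;=\; \ind_{H'_{x,\lambda}}^{\SU_n(F)} \bigl( (\chi_\lambda \cdot \psi)|_{H'_{x,\lambda}} \bigr), \qquad H'_{x,\lambda} := H_{x,\lambda} \cap \SU_n(F).
\]

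To prove irreducibility of this compact induction, I appeal to the standard intertwining criterion: it suffices to show that for each $g \in \SU_n(F) \setminus H'_{x,\lambda}$ there exists $h \in H'_{x,\lambda} \cap g H'_{x,\lambda} g^{-1}$ with $\chi(h) \neq \chi(g^{-1}hg)$, where $\chi = (\chi_\lambda \cdot \psi)|_{H'_{x,\lambda}}$. The analogous criterion for $\mrm{U}_n(F)$ follows from the irreducibility of $\rho_{\lambda,\psi}$ established in Proposition \ref{irreducible epipelagic}, giving a witness $h_0 \in H_{x,\lambda} \cap g H_{x,\lambda} g^{-1}$. The main technical step is to upgrade $h_0$ to lie in $\SU_n(F)$, which I accomplish by multiplying by a central element $z = cI \in Z(\mrm{U}_n(F)) \subset H_{x,\lambda}$: since $z$ is central it lies in $g H_{x,\lambda} g^{-1}$ and $\chi^g(z) = \chi(z)$, so $\chi(h_0 z)/\chi^g(h_0 z) = \chi(h_0)/\chi^g(h_0) \neq 1$. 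Choosing $c \in E^1$ with $c^n = \det(h_0)^{-1}$ gives $\det(h_0 z) = 1$; such a $c$ exists because when $n$ is odd the map $z \mapsto z^n$ is surjective on $E^1$, using that $E^1 \cap (1 + \varpi_E \Cal{O}_E)$ is pro-$p$ (with $p$ odd) and $\{\pm 1\}^n = \{\pm 1\}$.

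For the isomorphism criterion, Frobenius reciprocity implies that two compact inductions $\ind_{H'_{x,\lambda}}^{\SU_n(F)} \chi_i$ from a common open subgroup agree if and only if the inducing characters are conjugate under $\SU_n(F)$. In our case both restricted characters equal $\chi_\lambda$ on the pro-$p$ part $G_{x,1/2} \cap \SU_n(F)$, and conjugation by any $\SU_n(F)$-element normalizing $H'_{x,\lambda}$ preserves $\chi_\lambda$ because it preserves $\lambda$, so the condition collapses to equality $\psi|_{C_\lambda \cap \SU_n(F)} = \psi'|_{C_\lambda \cap \SU_n(F)}$ as claimed. The main obstacle in the whole argument is the central-modification step, which rests crucially on the surjectivity of $z \mapsto z^n$ on $E^1$; the remainder is routine Mackey-theoretic bookkeeping.
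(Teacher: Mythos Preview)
Your overall Mackey strategy matches the paper's, and your computation that $\det(H_{x,\lambda}) = E^1$ is essentially equivalent to the paper's argument, which instead uses the torus $S = \prod_i \mrm{U}_1(E_i/F_i) \subset H_{x,\lambda}$ and surjectivity of the norm maps $N_{E_i/E} \co \mrm{U}_1(E_i/F_i) \to \mrm{U}_1(E/F)$. The real divergence is in how you establish irreducibility after Mackey, and there your central-modification step has a gap. You claim that $z \mapsto z^n$ is surjective on $E^1$ when $n$ is odd because the pro-$p$ part $(E^1)_{0+} = E^1 \cap (1+\varpi_E\Cal{O}_E)$ ``is pro-$p$ (with $p$ odd)''. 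But being pro-$p$ does not make the $n$-th power map surjective: via the logarithm one has $(E^1)_{0+} \cong \varpi_E\Cal{O}_F$ additively, and multiplication by $n$ fails to be surjective precisely when $p \mid n$, which is not excluded (e.g.\ $n=p$). In that case no central $z$ satisfies $\det(h_0 z)=1$ for a general witness $h_0$, and the argument collapses. Your isomorphism-criterion paragraph is also incomplete: the Mackey criterion is not that the inducing characters be $\SU_n(F)$-conjugate, and you have not justified why an intertwining double-coset representative must normalize $H'_{x,\lambda}$ or fix $\lambda$.

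The paper avoids all of this by observing that once the single double coset gives $\rho_{\lambda,\psi}|_{\SU_n(F)} = \ind_{H'_{x,\lambda}}^{\SU_n(F)}(\chi_\lambda\cdot\psi)|_{H'_{x,\lambda}}$, the right-hand side is \emph{itself} an epipelagic representation of $\SU_n$ in the sense of \cite{RY14}: the same $\lambda$ is a stable functional at $x$ for $\SU_n$, the stabilizer of $\chi_\lambda$ in $(\SU_n)_x$ is exactly $H'_{x,\lambda}$, and $(\chi_\lambda\cdot\psi)|_{H'_{x,\lambda}}$ is one of the allowed extensions. Proposition~\ref{irreducible epipelagic} applied to $\SU_n$ then gives irreducibility directly, and \cite{RY14} Proposition~2.4(2) gives the isomorphism criterion. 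No intertwining-witness or central-modification argument is needed.
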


\begin{proof}
The second claim follows immediately from the first and \cite[Proposition 2.4(2)]{RY14}. To prove the first claim, by Mackey's formula and Proposition \ref{irreducible epipelagic} it suffices to see that 
\[
\# \SU_n(F) \backslash \mrm{U}_n(F) / H_{x,\lambda}  = 1. 
\]
or in other words that $H_{\lambda}(F)$ surjects onto $\SU_n(F) \backslash \mrm{U}_n(F)  = \mrm{U}_1(E/F)$. Since $\lambda$ was chosen to be regular semisimple, any lift  of it to $G_{x, 1/2}$ is a polynomial with distinct roots, so its centralizer (which is contained in $H_{x,\lambda}$) is a torus of the form $\prod \mrm{U}_1(E_i/F_i)$ with $E_i,F_i$ unramified extensions of $E,F$ respectively. The proof is then concluded by recalling that the norm map for an unramified extension $E_i/E$ is surjective onto $\mrm{U}_1(E/F)$ when restricted to $\mrm{U}_1(E_i/F_i)$.  
\end{proof}

\section{Langlands correspondence for epipelagic representations}\label{section Langlands parameters} 

We review the construction of the local Langlands correspondence for epipelagic representations in \cite{Kal15}. Let $G$ a tamely ramified quasi-split reductive group over $F$. 

\begin{defn}[{\cite[p. 37]{Kal15}}] \label{epipelagic parameter}
An \emph{epipelagic parameter} for $G$ over a local field $F$ is a Langlands parameter 
\[
\varphi \co W_F \rightarrow {}^L G
\]
satisfying the following conditions:
\begin{enumerate}
\item $\wh{T} = C(\varphi(P_F),\wh{G})$ is a maximal torus of $\wh{G}$.
\item The image of $\varphi(I_F)$ in $\Omega(\wh{T}, \wh{G}) \rtimes  I_F$ is generated by a regular elliptic element. 
\item If  $w \in I_F^{1/m+}$, where $m$ is the order of the regular elliptic element, then $\varphi(w)=(1,w)$.
\end{enumerate}

\end{defn}

The Langlands correspondence predicts that to a Langlands parameter $\varphi \co W_F \rightarrow {}^L G $ there should correspond an L-packet $\Pi_{\varphi}$ of representations of $G(F)$. Kaletha constructed this correspondence for epipelagic parameters in \cite{Kal15}, and we summarize the description of the $L$-packets, following \cite[\S 5]{Kal15}. We should clarify here that when we say ``$L$-packet'' we mean only the constituents of the L-packets considered in \cite{Kal15} which are representations of our chosen $\mrm{U}_n$. In other words we are ignoring representations of inner forms, and considering only those representations of $\mrm{U}_n$ which lie in same $L$-packet in the sense  of \cite{Kal15}.

\subsection{Step one: factorization through an admissible embedding} 

Let $\wh{S}$ be the Galois-module whose underlying abelian group is the complex torus $\wh{T}$ and whose Galois action is furnished by the composite
\[
\varphi \co W_F \rightarrow N(\wh{T}, \wh{G}) \rtimes W_F \rightarrow \Omega(\wh{T}, \wh{G}) \rtimes W_F \rightarrow \Aut(\wh{T}).
\]
We will construct a particular $\wh{G}$-conjugacy class of embeddings ${}^L j \co {}^L S \rightarrow {}^L  G$ which are tamely ramified in the sense that ${}^L j (1,w) = (1,w)$ for all $w \in P_F$. In this conjugacy class there is an embedding such that ${}^L j(\wh{S}) = \wh{T}$, and such that the following two composite homomorphisms are equal:
\[
\begin{tikzcd}
W_F \ar[rr, "\varphi"]  & &  N(\wh{T}, \wh{G}) \rtimes W_F \ar[r] & \Omega(\wh{T}, \wh{G}) \rtimes W_F \\
W_F \ar[r, "\iota_2"]  & \wh{S} \rtimes W_F \ar[r, "{}^L j"] & N(\wh{T}, \wh{G}) \rtimes W_F \ar[r] & \Omega(\wh{T}, \wh{G}) \rtimes W_F
\end{tikzcd} 
\]
Since ${}^L j$ contains the image of $\varphi$, choosing such an embedding gives a factorization of $\varphi$ through a homomorphism $\varphi_{S, {}^L j} \co W_F \rightarrow {}^L S$ such that ${}^L j \co {}^L S \hookrightarrow {}^LG $ is an admissible homomorphism: 
\[
\begin{tikzcd}
W_F \ar[rr, "\varphi"] \ar[dr, "\varphi_{S, {}^L j}"'] &  & {}^L G \\
& {}^L S \ar[ur, hook, "{}^L j"'] 
\end{tikzcd}
\]

It is worth noting that there are many possibilities for the conjugacy class of admissible embedding ${}^L j \co {}^L S \hookrightarrow  {}^L  G$. Moreover, for epipelagic representations the choice of ${}^L j$ is really significant, in contrast to previous incarnations of this contruction (see discussion on \cite[p.3]{Kal15}). The correct conjugacy class is specified by a subtle construction of $\chi$-data from the particular parameter $\varphi$, as described in \cite[\S 5.2]{Kal15}. We will not go into the details here, leaving them for when we actually need to compute, in \S \ref{section admissible embedding}.

\subsection{Step two: the toral invariant}
By the local Langlands correspondence for tori, from the Langlands parameter $\varphi_{S, {}^L j} \co W_F \rightarrow {}^L S$ constructed in Step one, we obtain a character 
\[
\chi_{S,{}^L j} \co S(F) \rightarrow \CC^{\times}
\]
attached to $\varphi_{S, {}^L j}$. Then \cite[\S  3.3]{Kal15} describes a construction starting from a pair $(S(F), \chi)$ of a tamely ramified maximal torus and a character of $S(F)$, and producing an epipelagic representation of $G(F)$. We will elaborate on this in the next step. However, this construction is \emph{not} applied to $( S(F),\chi_{S,{}^L j})$: we first need to modify the character $\chi_{S,{}^L j}$ by a character 
\[
\epsilon_\varphi \co S(F) \rightarrow \CC^{\times}
\]
 obtained from the \emph{toral invariant} of \cite[\S 4]{Kal15}. Thus the second step is the computation of the toral invariant and the character $\epsilon_\varphi$. The toral invariant is a collection of characters of unit groups of local fields labelled by the roots of $G$. Again, we postpone the details until we actually need to compute it, in \S \ref{section toral invariant}.

\subsection{Step three: Local Langlands for tori}\label{step3}

For each admissible (cf. \cite[p.222]{LS87} for the definition) embedding $j \co S \hookrightarrow  G$, $S$ is an elliptic tamely-ramified  maximal torus, hence determines a point $x$ in the Bruhat-Tits building for $G$ \cite[Remark 3]{Pra01}. The pair $(j \co S(F) \hookrightarrow G(F),\chi_{S,{}^L j} \cdot \epsilon_j)$ satisfies certain conditions \cite[Conditions 3.3]{Kal15} allowing one to perform the construction of \cite[\S 3.3]{Kal15}. The construction goes as follows. Suppose we have a pair $j \co S \hookrightarrow G$ and $\chi \co S(F) \rightarrow \CC^{\times}$ satisfying \cite[Conditions 3.3]{Kal15}. The inclusion $j \co S \hookrightarrow G$ induces a decomposition 
\[
\mf{g} = \mf{s} \oplus \mf{n}
\]
where $\mf{n}$ is the sum of all isotypic subspaces on which $S$ acts nontrivially.
Hence we obtain 
\begin{equation}\label{3 torus decomposition}
\mf{g}(F)_{x,r} = \mf{s}(F)_{r} \oplus  \mf{n}(F)_{x,r}
\end{equation}
for all $x$ and $r$. The character $\chi$ factors through $S(F)_{2/e}$ where $e$ is the ramification degree of the field extension splitting $S$ (which is $2$ in our case of interest), and hence descends to a character of 
\[
S(F)_{1/e}/S(F)_{2/e} \cong \mf{s}(F)_{1/e}/\mf{s}(F)_{2/e}.
\]
By the decomposition \eqref{3 torus decomposition} we extend it to a character on 
\[
G(F)_{x, 1/e}/G(F)_{x,2/e} \cong \mf{g}(F)_{x,1/e}/\mf{g}(F)_{x,2/e}.
\]
Since the resulting character is stabilized by $S(F)$, it extends to
\[
\wh{\chi} \co S(F) G(F)_{x,1/e} \rightarrow \CC^{\times}.
\] 
Then $\ind_{G(F)_{x, 1/e} S(F)}^{G(F)} \wh{\chi}$ is an epipelagic representation for $G(F)$ (i.e. $G(F)_{x, 1/e} S(F)  = H_{x,\lambda}$ for an appropriate stable functional $\lambda$). The L-packet of $\varphi$ consists of the (epipelagic) representations
\[
\ind_{G(F)_{x, r(x)} S(F)}^{G(F)} \wh{\chi_{S, {}^L j}}  \epsilon_{\varphi}
\]
where $\epsilon_{\varphi}$ is the character obtained form the toral invariant, and $j$ ranges over admissible embeddings $j \co S \hookrightarrow G$.

\section{Calculation of Langlands parameters and $L$-packets}\label{section calculation}

\subsection{Overview}

By reversing Kaletha's construction explained in \S \ref{section Langlands parameters}, we will calculate the Langlands parameters of the epipelagic representations of $\mrm{U}_n$ and $\SU_n$ constructed in \S \ref{epipelagic representation}, and identify the L-packets. By \S \ref{section construction epip reps}, particularly Lemma \ref{SU_n restriction}, all the Langlands parameters for the relevant epipelagic representations of $\SU_n$ are obtained from those for $\mrm{U}_n$, so we are reduced to computing the latter. 

We will begin by identifying the (tamely ramified) anisotropic torus $S$ corresponding to the given point $x \in \Cal{B}_G(F)$. Then we will extract from the stable functional a character on $S(F)$, as discussed in \S \ref{step3}. At this point we must calculate the toral invariant and the corresponding character $\epsilon_f$ of $S$ (here the subscript $f$ depends on $S$, and stands for something that has not yet been explained), and modify the character by $\epsilon_f$. Then we will apply the local Langlands correspondence to obtain a Langlands parameter 
\[
W_F\rightarrow {}^L S.
\]
Finally, we will calculate the admissible embedding ${}^L j \co {}^L S \hookrightarrow {}^L G$ and compose the preceding Langlands parameter with it; the resulting composition 
\[
W_F\rightarrow {}^L S \xrightarrow{{}^L j} {}^L G
\]
is then the Langlands parameter we seek. 

In what follows, we retain the notation of \S \ref{epipelagic representation}. In particular, $G = \mrm{U}_n$ and $x$ is a point of $\Cal{B}_G(F)$ which becomes hyperspecial in $\Cal{B}_G(E)$. We have a regular semi-simple element $\lambda \in \msf{V}_x \cong \Sym^2(\mrm{Std})$, meaning that its characteristic polynomial viewed as a self-adjoint $n \times n$ matrix over $k_E = k_F$ has distinct roots in $\ol{k_E}$. The centralizer of $\lambda$ in $\wt{\msf{G}}_x$ is the group scheme 
\[
D_{\lambda} := \Res_{(k_F[x]/p_{\lambda}(x))/k_F} \mu_2,
\]
while the centralizer of $\lambda$ in $\wt{\msf{G}}_x(k_F)$ is denoted $C_{\lambda}$. The epipelagic representations $\rho_{\lambda, \psi}$ are parametrized by the choice of $T$ and a character $\psi$ of $C_{\lambda}$. 

\subsection{The anisotropic torus}


We now identify the (tamely ramified) maximal torus corresponding to the point $x$, in the sense of \cite[Remark 3]{Pra01}. Choose a lift $\wt{\lambda} \in \mf{g}_{x,1/2}$ of $\lambda$. Any such choice has characteristic polynomial with distinct roots, since the roots are even distinct after reduction, and hence is automatically regular semisimple. We may thus define a maximal torus $S \subset G$ such that $S(F) = Z_{G(F)}(\wt{\lambda})$.

\begin{lemma}\label{lemma torus}
The torus $S$ is tamely ramified and anisotropic, and corresponds to the point $x$.
\end{lemma}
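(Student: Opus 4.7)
The statement decomposes into three assertions about $S = Z_G(\wt\lambda)$: that $S$ is a maximal torus, that it is tamely ramified and anisotropic, and that its associated point in $\Cal{B}_G(F)$ (in the sense of \cite{Pra01}) is exactly $x$. The plan is to handle tame ramification and regular semisimplicity via Hensel's lemma, anisotropy by invoking the classification of maximal tori in $\mrm{U}_n$, and the fixed-point assertion by base-changing to $E$ and using that $S(F)$ is compact.

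For the first two assertions, write $\wt\lambda = \varpi_E M$ with $M \in \mf{gl}_n(\mathcal{O}_E)$ self-adjoint (with respect to the form \eqref{hermitian form}) and reducing to $\lambda$; this is possible by the description of $\mf{g}_{x,1/2}$ in Remark \ref{rem: MP filtration} and the discussion following it. Since $\lambda$ is regular semisimple and $p>2$, Hensel's lemma applied to $p_M(T)$ produces $n$ distinct roots in an unramified extension of $E$. The eigenvalues of $\wt\lambda$ are $\varpi_E$ times these, so they are distinct (hence $\wt\lambda$ is regular semisimple and $S$ is a maximal torus) and lie in an unramified extension of $E$. Thus the splitting field of $S$ is unramified over $E$ and tamely ramified over $F$.

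For anisotropy, the Hermitian form pairs the $\mu$-eigenspace of $\wt\lambda$ with the $(-\bar\mu)$-eigenspace, a consequence of $\wt\lambda^\dagger = -\wt\lambda$. Combining this with the eigenvalue structure above gives a decomposition
\[
S \cong \prod_i \Res_{F_i/F} \mrm{U}_1(E_i/F_i)
\]
with each $F_i/F$ unramified and each $E_i = F_i \cdot E$ ramified quadratic over $F_i$ — this is the structural claim already invoked in the proof of Lemma~\ref{SU_n restriction}. Since each $\mrm{U}_1(E_i/F_i)$ is anisotropic, so is $S$.

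For the final assertion it suffices to show $S(F) \subset G(F)_{x,0}$, since anisotropy forces the fixed point of $S(F)$ on $\Cal{B}_G(F)$ to be unique. Extending scalars to $E$ turns $G_E$ into $\GL_{n,E}$ with $x$ a hyperspecial vertex, and $S_E$ into the centralizer of $M$ in $\GL_{n,E}$, a maximal torus split by an unramified extension of $E$. Because $M$ has distinct residues modulo $\varpi_E$, Hensel gives $\mathcal{O}_E[M] = \prod_i \mathcal{O}_{E_i}$, so its unit group is the (unique) maximal bounded subgroup of $S(E)$ and is contained in $\GL_n(\mathcal{O}_E) = G(E)_{x,0}$. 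Since $S(F)$ is compact by anisotropy, $S(F)$ lies in this maximal bounded subgroup of $S(E)$, hence in $G(F)_{x,0}$. The main obstacle is the structural decomposition of $S$ in the anisotropy step; once this is granted, the remaining ingredients are standard facts about unramified tori in $\GL_n$ and about fixed points of anisotropic tori on Bruhat-Tits buildings.
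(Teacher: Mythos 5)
The first two assertions (tame ramification and anisotropy) are handled correctly and are essentially a reformulation of the paper's argument: both proofs apply Hensel's lemma to the characteristic polynomial of a lift $\wt\lambda$ (the paper phrases this via $E_{\wt\lambda}=E[x]/\wt p_\lambda$, you via $\wt\lambda=\varpi_E M$), and both reduce anisotropy to the decomposition $S\cong\prod_i\ker(\Nm_{E_i/F_i})$, which the paper records as \eqref{torus S}. Your eigenspace-pairing observation is a slightly more concrete route to that decomposition.

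The gap is in the last step. You claim that it suffices to show $S(F)\subset G(F)_{x,0}$, ``since anisotropy forces the fixed point of $S(F)$ on $\Cal{B}_G(F)$ to be unique.'' That assertion needs justification and is not a formal consequence of anisotropy. The set of points fixed by the compact group $S(F)$ is a nonempty closed convex subset of $\Cal{B}_G(F)$, and its size depends on the specific elements of $S(F)$, not merely on $S$ being anisotropic. In fact, here the depth-zero part of $S(F)$ is $C_\lambda=\prod_i\mu_2(k_{F_i})$, none of whose non-central elements is regular in $G$, and the remaining elements of $S(F)$ are topologically unipotent — precisely the kind of elements whose individual fixed-point sets are balls of positive radius. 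So the inclusion $S(F)\subset G(F)_{x,0}$ only tells you that $x$ lies in the fixed-point set, not that the fixed-point set is $\{x\}$, nor that $x$ is the point attached to $S$ by the construction of \cite{Pra01}.

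What the paper actually does for this step avoids the issue entirely: it diagonalizes $\wt\lambda$ over the splitting field $E'_{\wt\lambda}$ by a conjugator in $G(\Cal{O}_{E'_{\wt\lambda}})$ (Hensel again), so that $x$ lies in the apartment $\Cal{A}_S(E'_{\wt\lambda})$, which is manifestly Galois-stable; since $S$ is anisotropic over $F$, the Galois-fixed locus of this apartment is a single point — and that single point is what Prasad's construction outputs. Because $x$ is a Galois-fixed point of the apartment, it must equal the Prasad point. To repair your argument along your own lines you would need to upgrade ``$S(F)$ fixes $x$'' to ``$x$ lies in the apartment of $S$ over a splitting field,'' e.g.\ by showing that $S$ extends to a smooth torus over $\Cal{O}_{E'_{\wt\lambda}}$ inside $\GL_{n,\Cal{O}_{E'_{\wt\lambda}}}$ and that the fixed-point set of this integral torus is the apartment; as written, that link is missing.
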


\begin{proof}

Let $\wt{p}_{\lambda}(x)$ be the characteristic polynomial of $\wt{\lambda}$, viewed as an $n \times n$ matrix over $E$. Set $E_{\wt{\lambda}} =  E[x]/\wt{p}_{\lambda}(x)$. We have an embedding $E_{\wt{\lambda}}^{\times} \hookrightarrow \GL_n(E)$, and we may identify $\mrm{U}_n \cap E_{\wt{\lambda}}^{\times} = S(F)$. It is clear that $S$ splits over $E_{\wt{\lambda}}$, which is an unramified extension of $E$, hence tamely ramified over $F$. 

The conjugate transpose defines an involution on $E_{\wt{\lambda}}$, whose fixed field is a quadratic subfield $F_{\wt{\lambda}} \subset E_{\wt{\lambda}}$. The condition of being unitary corresponds to having norm $1$ under $\Nm_{E_{\wt{\lambda}}/ F_{\wt{\lambda}}}$. Therefore we see that
\[
S = \ker \left( \Nm_{E_{\wt{\lambda}}/ F_{\wt{\lambda}}} \co \Res_{E_{\wt{\lambda}}/F}  E_{\wt{\lambda}}^{\times}   \rightarrow \Res_{F_{\wt{\lambda}}/F} F_{\wt{\lambda	}}^{\times} \right)
\]
is anisotropic. 

Finally, we show that $S$ corresponds to $x$ in the building of $G$ over $F$. Let $E'_{\wt{\lambda}}$ be the splitting field of $\wt{p}_{\lambda}$, so $E'_{\wt{\lambda}} \supset E_{\wt{\lambda}}$. Viewing $\wt{\lambda}$ as a matrix, we can diagonalize it over $E'_{\wt{\lambda}}$, since it is regular semisimple. Moreover, we can pick the conjugating element to be in $G(\Cal{O}_{E'_{\wt{\lambda}}})$ by Hensel's Lemma (because $\wt{\lambda}$ is a lift of a regular semisimple matrix over the residue field). So $S$ is conjugate to the standard diagonal subgroup of $G(E'_{\wt{\lambda}}) \cong \GL_n(E'_{\wt{\lambda}})$ under $\GL_n(\Cal{O}_{E'_{\wt{\lambda}}})$. Therefore in the building of $G$  over $E'_{\wt{\lambda}}$, $S$ corresponds to the point represented by the maximal compact subgroup $G(\Cal{O}_{E'_{\wt{\lambda}}})$. Since $G(\Cal{O}_{E'_{\wt{\lambda}}})$ is stable under $\Gal(E'_{\wt{\lambda}}/F)$, the corresponding point in the building of $G$ over $F$ is represented by the maximal compact subgroup $G(F) \cap G(\Cal{O}_{E'_{\wt{\lambda}}})$ of $G(F)$, which by definition is the point $x$. 
\end{proof}

\begin{remark}
The torus $S$ depends on the choice of lift $\wt{\lambda}$ of $\lambda$. However, the eventual structural results about $L$-packets will be independent of this choice. 
\end{remark}

For later use, we record some more precise information about the torus $S$ that comes out of the proof of Lemma \ref{lemma torus}. We factor the characteristic polynomial as 
\[
p_{\lambda}(x) =  \prod_{i=1}^m p_i(x),
\]
where $p_i(x)$ is irreducible over $k_F$, of degree $d_i$. Let $E_i$ (resp. $F_i$) be the unramified extension of $E$ (resp $F$) of degree $d_i$. Then 
\begin{equation}\label{torus S}
S =\prod_{i=1}^m \ker \left( \Nm \co E_i^{\times} \rightarrow F_i^{\times} \right)
\end{equation}

\begin{remark}
The right hand side of \eqref{torus S} depends only on the $d_i$, which depend only on $\lambda$. This shows that $S$ is independent of the choice of lift $\wt{\lambda}$. 
\end{remark}

We fix notation for the character group. Let $S_i = \ker \left( \Nm \co E_i^{\times} \rightarrow F_i^{\times} \right)$. Then we may write 
\[
X^*(S_i) = \coker \left( \Z[F_i/F] \rightarrow \Z[E_i/F]  \right) 
\]
where the maps are  the ``diagonal'' embeddings, being dual to the norm. We may pick a basis for the cocharacter group such that 
\[
X^*(S_i)  = \frac{\Z [ \lambda^{(i)}_1, \ldots, \lambda^{(i)}_{d_i}, \ol{\lambda}^{(i)}_1, \ldots, \ol{\lambda}^{(i)}_{d_i} ]}{\langle \lambda^{(i)}_r +  \ol{\lambda}^{(i)}_r \mid r = 1,\ldots, d_i \rangle}.
\]
If $\sigma_i \in \Gal(E_i/F)$ denotes a lift of Frobenius and $\tau_i \in \Gal(E_i/F)$ denotes the involution with fixed field $F_i$, we can choose the basis such that the Galois action given by $\sigma_i \lambda^{(i)}_r = \lambda^{(i)}_{r+1}$ and $\tau_i \lambda^{(i)}_r = \ol{\lambda}_r^{(i)}$, and the roots of $S$ are
\[
\alpha^{(i,j)}_{r,s} := \lambda^{(i)}_r  - \lambda^{(j)}_s 
\]
where if $i=j$ then $r \neq s$. Therefore, the character group $X^*(S)$ can be described as 
\begin{equation}\label{torus character group}
X^*(S) = \bigoplus_{i=1}^m   \coker \left( \Z[F_i/F] \rightarrow \Z[E_i/F]  \right)
\end{equation}

\subsection{The toral invariant} \label{section toral invariant}
Let $G$ be a reductive group defined over a local field $F$, and $S \subset G$ a maximal torus defined over $F$. Let $R(S,G)$ be the set of roots of $G$ with respect to $S$. We attach a \emph{toral invariant} to the pair $(S,G)$ following \cite[\S 4]{Kal15}. The toral invariant is a function $f \co R(S,G) \rightarrow \{\pm 1\}$, and enters into the local Langlands correspondence via an attached character $\epsilon_f \co S(F) \rightarrow \CC^{\times}$ that we will define.

\subsubsection{Definition of the toral invariant} 
We first recall the definition of the toral invariant from \cite[\S 4]{Kal15}. The set of roots $R(S,G)$ carries an action of $\Gamma := \Gal(\ol{F}/F)$. 

\begin{defn}
An orbit of the $\Gamma$-action on $R(S,G)$ is \emph{symmetric} if it is preserved by multiplication by $-1$. Otherwise the orbit is called \emph{asymmetric}. 

If $I \subset \Gamma$ denotes the inertia group, then every $\Gamma$-orbit decomposes into a disjoint union of $I$-orbits, which have the property that they are either all preserved by multiplication by $-1$ or none are, in which case we call them \emph{inertially symmetric} or \emph{inertially asymmetric} respectively. 

A root $\alpha \in R(S,G)$ is called \emph{(inertially) symmetric or asymmetric} if its orbit is. We define
\[
\Gamma_{\alpha} := \Stab_{\Gamma}(\alpha), \quad \text{and} \quad 
\Gamma_{\pm \alpha} = \Stab_{\Gamma}(\{\alpha, -\alpha\}).
\]
Obviously $[\Gamma_{\pm \alpha}  \co \Gamma_{\alpha} ] = 1$ if $\alpha$ is asymmetric, and $[\Gamma_{\pm \alpha}  \co \Gamma_{\alpha} ]  = 2$ if $\alpha$ is symmetric. Let $F_{\alpha} \supseteq F_{\pm \alpha}$ be the corresponding fixed fields.
\end{defn}

We may now define the toral invariant, following \cite[\S 4.1]{Kal15}. Let $S \subset G$ be a maximal torus and $R(S,G)$ the roots of $G$ with respect to $S$. The \emph{toral invariant} is a function
\[
	f : = f_{(G,S)} \co R(S,G) \rightarrow \{ \pm  1\}
\]
defined as follows. If $\alpha \in R(S,G)$ is asymmetric, then $f(\alpha)=1$. Suppose $\alpha \in R(S,G)$ is a symmetric root. We have a corresponding one-dimensional root subspace $\mf{g}_{\alpha} \subset \mf{g}$ defined over $F_{\alpha}$. Let $H_{\alpha} = \operatorname d \alpha^{\vee} (1) \in \mathfrak s (F_{\alpha})$ be the coroot corresponding to $\alpha$, and choose $X_{\alpha} \in \mf{g}_{\alpha}(F_{\alpha})$. Let $\tau_{\alpha} \in \Gamma_{\pm \alpha} \setminus \Gamma_{\alpha}$. Then $\tau_{\alpha} X_{\alpha}$ is a non-zero element of $\mf{g}_{-\alpha}(F_{\alpha})$, and we set 
\begin{equation}
f(X_{\alpha}) := \frac{[X_{\alpha}, \tau_{\alpha} X_{\alpha}]}{H_{\alpha}} \in F_{\alpha}^{\times}.
\end{equation}
It is easily checked that $f(X_{\alpha})$ lies in $F_{\pm \alpha}^{\times}$, and is well-defined up to norms from $F_{\alpha}$, so if we set $\kappa_{\alpha} \co F_{\pm \alpha}^{\times} \rightarrow \{ \pm 1\}$ to be the quadratic character associated to $F_{\alpha}/F_{\pm \alpha}$ (which kills norms from $F_{\alpha}$) then 
\begin{equation}\label{eqn toral invariant}
f(\alpha) = \kappa_{\alpha} \left( \frac{[X_{\alpha}, \tau X_{\alpha}]}{H_{\alpha}} \right) \in \{ \pm 1\}
\end{equation}
is independent of the choice of $X_{\alpha}$. 

\begin{remark}\label{kappa is legendre}To flesh this out, we note that if $\alpha$ is symmetric and inertially symmetric, i.e. $F_{\alpha}/F_{\pm \alpha}$ is totally ramified quadratic (which applies for all roots  in our situation), then $\kappa_{\alpha}$ can be identified with the Legendre symbol on $F_{\pm \alpha}^{\times}/N_{F_{\alpha}/F_{\pm \alpha}} (F_{\alpha}^{\times} ) \cong k_{F_\alpha}^{\times}/k_{F_\alpha}^{\times 2}$. As explained in \cite[Chapter V, Section 3, Corollaries 5 and 7]{Serre79}, the latter isomorphism is the composition of the canonical isomorphisms \[ F_{\pm \alpha}^{\times}/N_{F_{\alpha}/F_{\pm \alpha}} (F_{\alpha}^{\times} ) \longleftarrow \cal{O}_{F_{\pm \alpha}}^{\times}/N_{F_{\alpha}/F_{\pm \alpha}} (\cal{O}_{F_{\alpha}}^{\times} ) \longrightarrow k_{F_\alpha}^{\times}/k_{F_\alpha}^{\times 2}. \]
\end{remark}

From the toral invariant $f_{(G,S)}$, we can construct a character $\epsilon_f \co S(F) \rightarrow \CC^{\times}$ as explained in \cite[\S 4.6]{Kal15}. It is determined by the formula \eqref{kaletha toral invariant} below, so we will omit the definition from first principles.

\subsubsection{Computing the toral invariant} 

We begin by recalling a useful result (\cite[Lemma 4.12]{Kal15}) for computing the toral invariant. To state it, we introduce some notation. We say that the root values of $\gamma \in S(F)$ are \emph{topologically semi-simple} (resp. \emph{unipotent}) if for all $\alpha \in R(S,G)$ the element $\alpha(\gamma) \in F_{\alpha}^{\times}$ is topologically semi-simple (resp. unipotent) (see \cite{AS08} for the terminology). 

\begin{lemma}
If the action of $I$ on $X^*(S)$ is tame and generated by a regular elliptic element, then for every $\gamma \in S(F)$ whose root values are topologically semi-simple we have 
\[
\epsilon_f(\gamma) = \prod_{\substack{\alpha \in R(S,G)_{\mrm{sym}}/\Gamma \\ \alpha(\gamma) \neq 1} } f_{(G,S)}(\alpha).
\]
For every $\gamma$ whose root values are topologically unipotent, we have $\epsilon_f(\gamma)=1$. 
\end{lemma}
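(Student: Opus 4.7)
The plan is to unwind the definition of $\epsilon_f$ from \cite{Kal15} \S 4.6, which presents it as a product over $\Gamma$-orbit representatives $[\alpha] \in R(S,G)/\Gamma$ of pulled-back characters $\epsilon_\alpha \circ \alpha \colon S(F) \to \CC^\times$. Since $\epsilon_\alpha \equiv 1$ for asymmetric $\alpha$ by construction, the product immediately collapses to one indexed by the symmetric orbits, matching the index set on the right-hand side of the asserted formula. It then suffices to evaluate each factor $\epsilon_\alpha(\alpha(\gamma))$ and verify that it equals $f(\alpha)$ or $1$ under the stated hypotheses.

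The evaluation depends on whether the symmetric orbit is inertially symmetric (so $F_\alpha/F_{\pm\alpha}$ is totally ramified quadratic) or inertially asymmetric (so $F_\alpha/F_{\pm\alpha}$ is unramified quadratic). In both cases $\epsilon_\alpha$ is an order-two character of $F_\alpha^\times$ determined by $f(\alpha)$. If $\alpha(\gamma)$ is topologically unipotent, then $\alpha(\gamma)$ lies in the pro-$p$ part of $F_\alpha^\times$; since $p>2$ and $\epsilon_\alpha$ has order two, $\epsilon_\alpha(\alpha(\gamma)) = 1$, giving the second assertion. If $\alpha(\gamma)$ is topologically semisimple and nontrivial, one uses the explicit formulas for $\epsilon_\alpha$, together with the identification of $\kappa_\alpha$ as a Legendre symbol (Remark \ref{kappa is legendre}), to compute $\epsilon_\alpha(\alpha(\gamma)) = f(\alpha)$; and when $\alpha(\gamma)=1$ the factor is trivially $1$. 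Multiplying over symmetric orbits then yields the first assertion.

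The hypothesis that $I$ acts tamely and through a regular elliptic element is what keeps this case analysis clean: it pins down the residue-field extension $k_{F_\alpha}/k_{F_{\pm\alpha}}$ and forces $\alpha(\gamma)$ for topologically semisimple $\gamma$ to sit in a predictable subgroup of $F_\alpha^\times$ on which $\epsilon_\alpha$ takes the value $f(\alpha)$ exactly away from the identity. The main obstacle is treating the inertially asymmetric case uniformly with the inertially symmetric one, since in the former $\epsilon_\alpha$ is defined through its behavior on the residue field rather than on valuations; this is precisely the content of \cite{Kal15} Lemma 4.12, whose proof our argument would follow verbatim, so in the actual text we would simply cite it.
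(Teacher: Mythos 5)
The paper does not prove this lemma at all: the text introduces it with ``We begin by recalling a useful result (\cite{Kal15}, Lemma 4.12)'' and simply cites Kaletha. Your proposal likewise ends by observing that the argument is exactly Kaletha's Lemma 4.12 and that one would cite it; on that level the two treatments agree.

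That said, the sketch you give of what the cited proof contains has a step that does not work as stated. You assert that in the symmetric cases one computes $\epsilon_\alpha(\alpha(\gamma)) = f(\alpha)$ directly from the explicit formulas. But in the inertially symmetric case the character $\epsilon_\alpha$ on $F_\alpha^\times$ is of the form $x \mapsto f_{(G,S)}(\alpha)^{\mathrm{val}_{F_\alpha}(x)}$, which is trivial on $\Cal{O}_{F_\alpha}^\times$; and for $\gamma \in S(F)$ topologically semisimple the root value $\alpha(\gamma)$ is a prime-to-$p$ root of unity, hence a unit, so this factor would always be $1$, not $f(\alpha)$. In other words, $\epsilon_f(\gamma)$ is not literally $\prod_\alpha \epsilon_\alpha(\alpha(\gamma))$: the Langlands--Shelstad \S 2.5 mechanism that produces $\epsilon_f$ from the $\{\epsilon_\alpha\}$ is more involved, and it is precisely that structure that makes the topologically semisimple case give $f(\alpha)$ per contributing orbit. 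Since you ultimately defer to Kaletha's Lemma 4.12 rather than relying on this computation, the gap does not affect the correctness of the final claim, but the factor-by-factor evaluation should not be presented as if it follows by plugging $\alpha(\gamma)$ into $\epsilon_\alpha$.
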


The assumption is satisfied for all epipelagic parameters. This implies that $\epsilon_f$ factors through $S/S_{1/2} \cong C_{\lambda} $, and is given by
\begin{equation}\label{kaletha toral invariant}
\epsilon_f(\gamma) = \prod_{\substack{\alpha  \in R(S,G)/\Gamma \\ \alpha(\gamma) \neq 1}} f(\alpha).
\end{equation}
(Note that in our case, every root is inertially symmetric, hence a fortiori symmetric.) We have $C_{\lambda} \cong \prod_{i=1}^m \mu_2(F_i)$. Let 
\begin{equation}\label{c_i}
c_i   = (0, \ldots, 0, \underbrace{1}_i, 0, \ldots, 0) \in  \prod_{i=1}^m \mu_2(F_i) \cong C_{\lambda}
\end{equation}
be the ``indicator'' of the $i$th component. The roots which are non-trivial on $c_i$ are the $\pm \alpha^{(i,j)}_{r,s}$ where $j \neq i$. The splitting field of $\alpha^{(i,j)}_{r,s}$ is the unramified extension of $E$ of degree $d_{ij} := [d_i,d_j]$ (the least common multiple of $d_i$ and $d_j$), which we denote by $E_{d_{ij}}$.\\

To compute the toral invariant we introduce some new notation. The factorization $p_{\lambda}(x) = \prod p_i(x)$ induces a splitting of our Hermitian space $(V,\langle \cdot, \cdot \rangle)$ into eigenspaces for $\wt{\lambda}$, regarded as a unitary operator on $(V,\langle \cdot, \cdot \rangle)$, which we write as
\[
(V , \langle \cdot, \cdot \rangle) \cong \bigoplus_{i=1}^m  (E_i, \langle \cdot, \cdot \rangle_i) .
\]
We also denote by $V_i$ the underlying $E$-inner product space structure of $(E_i, \langle \cdot, \cdot \rangle_i)$.

Now comes a simple but important point. Since $\wt{\lambda}$ is unitary, the form $\langle \cdot, \cdot \rangle_i$ has the property that $\langle e x, y \rangle = \langle x, \ol{e} y \rangle$ for any $e \in E_i$, where $e \mapsto \ol{e}$ is the conjugation in $\Gal(E_i/F_i)$. By the non-degeneracy of the trace pairing, any hermitian form with this property can be written as
\begin{equation}\label{hermitian trace pairing}
\langle x, y \rangle_i = \Tr_{E_i/E} (\eta_i x \ol{y})
\end{equation}
for some $\eta_i \in F_i^{\times}$ (the hermitian property forces $\eta_i$ to be fixed by $\Gal(E_i/F_i)$). 

Since $\lambda$ was regular semisimple over $k_F$, there exists an $\Cal{O}_E$-lattice $\Lambda$ for $V$ and a compatible splitting $\Lambda = \bigoplus_{i=1}^m \Lambda_i$. Thus we have a similar story over the residue field $k_F$, which will be useful for the computation. We abuse notation by also using $\eta_i$ to denote the image of  $\eta_i$ under the isomorphism $F_i^{\times}/N_{E_i/F_i} E_i^{\times} \cong k_{F_i}^{\times}/k_{F_i}^{\times 2}$.

 Choose a basis $\{v^{(i)}_r\}_{r=1, \ldots, d_i}$ for $V_i$. Over $E_{d_{ij}}$ we pick generators for the associated root groups: for $i \neq j$, let $X^{(i,j)}_{r,s}\in \mf{g}_{\alpha^{(i,j)}_{r,s}}$ be the element of $\mf{g}(E_{d_{ij}})$ sending $v^{(i)}_r  \mapsto v^{(j)}_s$ and sending all the other basis vectors to $0$: thus $X^{(i,j)}_{r,s}$ is the ``elementary matrix'' with a single non-zero entry of $1$ in the entry corresponding to the pair $v^{(i)}_r, v^{(j)}_s$. (The roots with $i = j$ will not contribute to the present calculation.)
 
 Corresponding to the root $\alpha^{(i,j)}_{r,s}$ we have the coroot $H^{(i,j)}_{r,s} \in \mf{g}(E_{d_{ij}})$ which can be identified with the matrix sending 
 \begin{align*}
 v^{(i)}_r &  \mapsto  v^{(i)}_r\\
  v^{(j)}_s & \mapsto  - v^{(j)}_s.
 \end{align*}
We must then calculate $\tau_{\alpha^{(i,j)}_{r,s}}(X_{r,s}^{(i,j)})$. Since $\mf{g}$ consists of anti-Hermitian matrices, $\tau_{\alpha^{(i,j)}_{r,s}}(X_{r,s}^{(i,j)})$ may be identified with the negative of the adjoint of $X_{r,s}^{(i,j)}$. Since the hermitian form is given by \eqref{hermitian trace pairing}, we can choose the basis $\{v_r^{(i)}\}_{r=1, \ldots, d_i}$ so that the hermitian form is represented by the matrix
\[
\begin{pmatrix} \eta_i  & &  & \\ &  \sigma(\eta_i)&  \\ &   & \sigma^2(\eta_i) \\ & & & \ddots   \end{pmatrix}. 
\]
Therefore $\tau_{\alpha^{(i,j)}_{r,s}}(X_{r,s}^{(i,j)})$ takes $v^{(j)}_s \mapsto - \frac{\sigma^{s-1}( \eta_j)}{\sigma^{r-1}(\eta_i) }v^{(i)}_r$ and sends the other basis vectors to $0$. Thus 
\[
[X_{r,s}^{(i,j)}, \tau_{\alpha^{(i,j)}_{r,s}} X_{r,s}^{(i,j)}] = - \frac{\sigma^{s-1}( \eta_j)}{\sigma^{r-1}(\eta_i) }  H_{r,s}^{(i,j)}.
\]
Hence, by the definition of the toral invariant \eqref{eqn toral invariant} we have 
\[
f_{(G,S)}(\alpha^{(i,j)}_{r,s}) =  \kappa_{\alpha^{(i,j)}_{r,s}} \left( - \frac{\sigma^{s-1}( \eta_j)}{\sigma^{r-1}(\eta_i) } \right).	
\]
Since all the roots are inertially symmetric, this is the same as the Legendre symbol (using Remark \ref{kappa is legendre})
\[
f_{(G,S)}(\alpha^{(i,j)}_{r,s}) = \leg{-  \sigma^{s-1}( \eta_j) / \sigma^{r-1}(\eta_i) }{q^{d_{ij}}}
\]
where ${d_{ij}} = [d_i, d_j]$ is the degree of the residue field of $F_{\alpha}/F$, the symbol $\leg{\cdot}{q^{d_{ij}}}$ is the quadratic character of $\F_{q^{d_{ij}}	} $, and where we are invoking the earlier abuse of notation to view $ \sigma^{s-1}( \eta_j) / \sigma^{r-1}(\eta_i)$ in the residue field modulo squares. Since $\sigma$ is a lift of Frobenius, we have 
\begin{equation}\label{toral invariant}
 f_{(G,S)}(\alpha^{(i,j)}_{r,s})  = \leg{ - \sigma^{s-1}( \eta_j) / \sigma^{r-1}(\eta_i) }{q^{{d_{ij}}}} = \leg{- \eta_j^{q^{s-1}} / \eta_i^{q^{r-1}} }{q^{{d_{ij}}}} = \leg{-\eta_j/\eta_i}{q^{{d_{ij}}}}.
\end{equation}

Using these calculations and \eqref{kaletha toral invariant} we may finally describe the character $\epsilon_f \co S(F) \rightarrow \CC^{\times}$.  

\begin{cor}\label{cor: toral invariant 1}
Let $c_i$ be as in \eqref{c_i}. Then we have 
\[
\epsilon_f(c_i) =   \leg{-1}{q}^{d_i(n-d_i)}  \leg{\eta_i}{q^{d_i}}^{n} \prod_{j}  \leg{\eta_j}{q^{d_j}}^{d_i} .
\]
\end{cor}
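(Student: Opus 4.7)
The plan is to compute $\epsilon_f(c_i)$ directly by running through formula \eqref{kaletha toral invariant}, which expresses $\epsilon_f(c_i)$ as a product of $f(\alpha)$ over $\Gamma$-orbits of roots $\alpha$ non-trivial on $c_i$. Since $c_i$ acts as $-1$ on $V_i$ and $+1$ on every other $V_j$, the relevant roots $\alpha^{(i',j')}_{r,s}$ are precisely those with exactly one of $i', j'$ equal to $i$.

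The next step is to count $\Gamma$-orbits among these roots. A Frobenius lift $\sigma$ cyclically shifts $r$ modulo $d_i$ and $s$ modulo $d_j$, so the $\sigma$-orbit of $\alpha^{(i,j)}_{r,s}$ has size $\mrm{lcm}(d_i,d_j)$, while $\tau$ exchanges $\alpha^{(i,j)}_{r,s}$ with $-\alpha^{(i,j)}_{r,s} = \alpha^{(j,i)}_{s,r}$. Thus for each $j \neq i$, the $2 d_i d_j$ roots involving $V_i$ and $V_j$ assemble into exactly $\gcd(d_i,d_j)$ $\Gamma$-orbits of size $2\,\mrm{lcm}(d_i,d_j)$. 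Substituting the value formula \eqref{toral invariant} yields
\[
\epsilon_f(c_i) \;=\; \prod_{j \neq i} \leg{-\eta_j/\eta_i}{q^{\mrm{lcm}(d_i,d_j)}}^{\gcd(d_i,d_j)}.
\]

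Finally, I would rewrite this into the form of the statement using the elementary identity $\leg{\eta}{q^d} = \leg{\eta}{q^e}^{d/e}$ valid for $\eta \in \F_{q^e}^{\times}$ with $e \mid d$. Setting $d = \mrm{lcm}(d_i,d_j)$ and expanding by multiplicativity, the identities $d\cdot\gcd(d_i,d_j)=d_i d_j$, $(d/d_i)\gcd(d_i,d_j)=d_j$, and $(d/d_j)\gcd(d_i,d_j)=d_i$ collapse each factor to $\leg{-1}{q}^{d_i d_j}\leg{\eta_i}{q^{d_i}}^{d_j}\leg{\eta_j}{q^{d_j}}^{d_i}$. Summing exponents of $\leg{-1}{q}$ and $\leg{\eta_i}{q^{d_i}}$ over $j \neq i$ via $\sum_{j \neq i} d_j = n - d_i$, and adjusting cosmetically using $\leg{\eta_i}{q^{d_i}}^{2d_i} = 1$ in order to pass from the range $j \neq i$ to arbitrary $j$ in the final product, gives the claimed expression. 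There is no conceptual obstacle; the work is purely combinatorial bookkeeping, and the only real pitfall is properly accounting for the $\tau$-identification of $(i,j)$-type and $(j,i)$-type roots so as to avoid double-counting $\Gamma$-orbits.
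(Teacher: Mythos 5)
Your proposal is correct and follows essentially the same route as the paper's own proof: apply the formula \eqref{kaletha toral invariant}, identify the relevant roots as those $\alpha^{(i',j')}_{r,s}$ with exactly one of $i',j'$ equal to $i$, count $(d_i,d_j)$ Galois orbits for each $j \neq i$, substitute the value \eqref{toral invariant}, and simplify via the Legendre-symbol norm identity (Lemma \ref{Legendre norm}). If anything, you are slightly more careful than the paper about the full $\Gamma$-orbit size being $2\,\mrm{lcm}(d_i,d_j)$ rather than $[d_i,d_j]$, which is a minor but welcome precision.
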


\begin{proof}
According to \eqref{kaletha toral invariant} we have 
\[
\epsilon_f(c_i) = \prod f_{(G,S)}(\alpha^{(i',j')}_{r,s}) 
\]
where the product runs over Galois orbits of roots $\alpha^{(i',j')}_{r,s}$ such that $\alpha^{(i',j')}_{r,s}(c_i) \neq 1$. It is easy to see that this occurs if and only if \emph{exactly one} of $i',j'$ equals $i$. The Galois orbit of $\alpha^{(i',j')}_{r,s}$ has size $d_{i'j'} = [d_{i'},d_{j'}]$, so for each fixed $i',j'$ there are $(d_{i'},d_{j'})$ Galois orbits. Therefore by the computation \eqref{toral invariant} this product equals 
\begin{equation}\label{toral invariant eq 1}
\prod f_{(G,S)}(\alpha^{(i',j')}_{r,s})  = \prod_{j \neq i} \leg{-\eta_j/\eta_i}{q^{[d_i,d_j]}}^{(d_i,d_j)}.
\end{equation}
To simplify this expression, we use the identity $\left( \frac{ x^d }{q} \right) = \left( \frac{x}{q^d} \right)$ from Lemma \ref{Legendre norm} repeatedly in \eqref{toral invariant eq 1} to rewrite it as
\begin{align*}
\prod f_{(G,S)}(\alpha^{(i',j')}_{r,s})  &  = \prod_{j\neq i} \leg{-1}{q^{d_id_j}} \prod_{j \neq i}  \leg{\eta_i}{q^{d_id_j}} \prod_{j\neq i}  \leg{\eta_j}{q^{d_id_j}} =  \prod_{j\neq i} \leg{-1}{q}^{d_id_j} \prod_{j \neq i}  \leg{\eta_i}{q^{d_i}}^{d_j} \prod_{j\neq i}  \leg{\eta_j}{q^{d_j}}^{d_i} \\
&= \leg{-1}{q}^{d_i(n-d_i)}  \leg{\eta_i}{q^{d_i}}^{n-d_i} \prod_{j\neq i}  \leg{\eta_j}{q^{d_j}}^{d_i} =  \leg{-1}{q}^{d_i(n-d_i)}  \leg{\eta_i}{q^{d_i}}^n \prod_{j}  \leg{\eta_j}{q^{d_j}}^{d_i}.
\end{align*}

\end{proof}

In order to elucidate the dependence of the toral invariant on the stable orbit, we now manipulate Corollary \ref{cor: toral invariant 1} into another form. 

The Hermitian form on $\Lambda$ descends to a symmetric bilinear form on $\ol{V} := \Lambda/\varpi_E \Lambda$, and we have a compatible splitting $\ol{V} \cong \bigoplus_i \ol{V}_i$. Let $D_i$ be the the discriminant of $\ol{V}_i$, as defined in Appendix \ref{app: disc}. Then by Lemma \ref{disc trace pairing} we have $\leg{D_j}{q} = (-1)^{d_j-1} \cdot \leg{\eta_j}{q^{d_j}}$, hence
\[
 \prod_{j}  \leg{\eta_j}{q^{d_j}}^{d_i}  = \left( \prod_j (-1)^{d_j-1} \leg{D_j}{q} \right)^{d_i}.
 \]
Let $D$ be the discriminant of $\ol{V}$; notice that this is independent of $\lambda$. Since  $\prod_j  \leg{D_j}{q} = \leg{D}{q}$ we can rewrite Corollary \ref{cor: toral invariant 1} as follows. 

\begin{cor}\label{cor: toral invariant 2}  Let $c_i$ be as in \eqref{c_i}. 
Let $D$ be the discriminant of $\ol{V}$. Then we have 
\[
\epsilon_f(c_i) =   \leg{-1}{q}^{d_i(n-d_i)}  \cdot  \left( \prod_j (-1)^{d_j-1}  \right)^{d_i} \cdot \leg{\eta_i}{q^{d_i}}^{n} \cdot \leg{D}{q}^{d_i}
\]
\end{cor}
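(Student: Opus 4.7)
The proof is essentially a straightforward algebraic rewriting of Corollary \ref{cor: toral invariant 1}, with the key conceptual input being that the parameter $\eta_j$ appearing in the trace-form description \eqref{hermitian trace pairing} of the Hermitian form on each eigenspace can be traded, up to an explicit sign, for the discriminant of the associated quadratic form over $k_F$. The plan is therefore to start from Corollary \ref{cor: toral invariant 1} and substitute into the factor $\prod_j \leg{\eta_j}{q^{d_j}}^{d_i}$.

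First I would recall the setup: since $\lambda$ is regular semisimple over $k_F$, the operator $\wt{\lambda}$ preserves an $\Cal{O}_E$-lattice $\Lambda \subset V$ and splits it compatibly with the factorization of the characteristic polynomial, yielding $\Lambda = \bigoplus_i \Lambda_i$ and, on reduction, $\ol{V} = \bigoplus_i \ol{V}_i$ as an orthogonal decomposition of quadratic spaces over $k_F$. The pairing on each $\ol{V}_i$ is, by \eqref{hermitian trace pairing}, of trace-form type with parameter $\eta_i$.

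The main ingredient is then Lemma \ref{disc trace pairing} from Appendix \ref{app: disc}, which computes the discriminant of such a trace form and gives
\[
\leg{D_j}{q} = (-1)^{d_j-1}\leg{\eta_j}{q^{d_j}}.
\]
Solving for $\leg{\eta_j}{q^{d_j}}$ and raising to the $d_i$-th power yields
\[
\prod_j \leg{\eta_j}{q^{d_j}}^{d_i} = \left(\prod_j (-1)^{d_j-1}\right)^{d_i}\left(\prod_j \leg{D_j}{q}\right)^{d_i}.
\]
Multiplicativity of the discriminant under orthogonal direct sums (which amounts to the determinant of a block-diagonal Gram matrix being the product of the block determinants) gives $\prod_j \leg{D_j}{q} = \leg{D}{q}$, independent of $\lambda$. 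Substituting back into Corollary \ref{cor: toral invariant 1} and leaving the factors $\leg{-1}{q}^{d_i(n-d_i)}$ and $\leg{\eta_i}{q^{d_i}}^{n}$ untouched produces exactly the displayed formula.

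There is no serious obstacle: the only nontrivial step is the invocation of Lemma \ref{disc trace pairing}, and even that is essentially a classical discriminant computation. The value of the reformulation lies entirely in the interpretation — isolating the factor $\leg{\eta_i}{q^{d_i}}^n$, which depends only on the rational orbit of $\lambda$ (i.e.\ on $\eta_i$ up to norms), and collecting everything else into quantities depending only on $D$ and on the multiset of degrees $\{d_j\}$ (which are stable-conjugacy invariants). This separation is what will later let one read off the constancy of $\epsilon_f$ on stable orbits in the even case.
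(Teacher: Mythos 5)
Your proposal is correct and follows the paper's own argument essentially verbatim: both start from Corollary \ref{cor: toral invariant 1}, invoke Lemma \ref{disc trace pairing} to replace $\leg{\eta_j}{q^{d_j}}$ by $(-1)^{d_j-1}\leg{D_j}{q}$, and conclude via multiplicativity of the discriminant under orthogonal direct sums. The closing remarks on stable-orbit dependence likewise mirror the remark the paper appends after the corollary.
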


\begin{remark}
Since the $d_i$ and $D$ are independent of the particular rational orbit within the stable orbit of $\lambda$, the only term in the expression that depends on the rational orbit of $\lambda$ is $\leg{\eta_i}{q^{d_i}}^{n} $, which disappears when $n$ is even. 
\end{remark}

\subsection{Explication of local Langlands for tori}\label{LLC tori}

Following \S \ref{section Langlands parameters}, we now have the ``right'' pair $(S, \chi)$ to input into the local Langlands correspondence for tori, obtaining a Langlands parameter $W_F \rightarrow {}^L S$.
at which point we will need to pick the correct admissible embedding ${}^L j \co {}^L S \hookrightarrow {}^L G$. In order to do so, we will have explicate the data that comes out of the local Langlands correspondence. 

First let us flesh out the character $\chi$. It is a product $\chi_{S, {}^Lj} = \chi_{\lambda} \cdot \psi \cdot \epsilon_f^{-1}$ where $\epsilon_f$ is the character coming from the toral invariant, which we just computed in Corollary \ref{cor: toral invariant 2}. Note that both $\psi$ and $\epsilon_f$ factor through $S/S_{1/2} \cong C_{\lambda}$. On the other hand, $\chi_{\lambda}$ is the character of $S_{1/2}$ obtained as the composition 
\[
S_{1/2} \rightarrow S_{1/2}/S_1 = \mf{s}_{1/2}/\mf{s}_1 \hookrightarrow \msf{V}_x \xrightarrow{\chi \circ \lambda} \CC^{\times}.
\]

Recall the description \eqref{torus S}, $S = \prod_{i=1}^m \mrm{U}_1(E_i/F_i)$. We make some preliminary observations concerning the local Langlands correspondence for groups of the form $U_1(E_i/F_i)$. There is a surjection 
\begin{align*}
h \co E_i^{\times}& \twoheadrightarrow \mrm{U}_1(E_i/F_i) \\
x &\mapsto x \ol{x}^{-1} .
\end{align*}
By duality (contravariance of formation of $L$-groups for tori), $h$ induces an injection of dual groups $\wh{h} \co \wh{\mrm{U}}_1(E_i/F_i)  \hookrightarrow  \wh{E}_i^{\times} \cong \Ind_{W_{E_i}}^{W_F} \CC^{\times}$. The LLC for $\mrm{U}_1(E_i/F_i)$ can be embedded in the LLC for $E_i^{\times}$ via the diagram
\[
\begin{tikzcd}
\Hom(\mrm{U}_1(E_i/F_i), \CC^{\times}) \ar[r, hook, "h^*"]  \ar[dd, dotted] &   \Hom( E_i^{\times},\CC^{\times})  \ar[d, leftrightarrow, "\text{LLC}"] \\
 &  H^1(W_{E_i}, \CC^{\times}) \ar[d, equals, "\text{Shapiro's Lemma}"] \\
H^1(W_F, \wh{\mrm{U}}_1(E_i/F_i) ) \ar[r, hook, "\wh{h}"] &   H^1(W_F,  \Ind_{W_{E_i}}^{W_F} \CC^{\times})   \\
\end{tikzcd}
\]
The observation is that the dotted arrow is the Local Langlands Correspondence for $\mrm{U}_1(E_i/F_i)$. In fact this is an instance a general functoriality property for the LLC for tori, as is clear from the explicit construction of this correspondence in \cite{Lang97}.


We will use this diagram to explicate certain information which will be necessary for computing the admissible embedding. From $(S, \chi)$ we obtain homomorphisms $\varphi_i \co W_{E_i} \rightarrow \CC^{\times}$, and we will need to know what these maps do to wild inertia and certain lifts of Frobenius. According to our convention \eqref{functional duality convention}, the character $\chi_{\lambda}$ is trivial on $S_1$, and can be described on $S_{1/2} = S(F) \cap (1+\varpi_E \Cal{O}_{E_{\wt{\lambda}}}) $ as 
\[
\chi_{\lambda}(y) = \chi( \ol{ \Tr_{E_{\wt{\lambda}}/E} (\lambda \log y  ) }  )
\]
where we use $\varpi_E$ to normalize the logarithm $\log \co S(F) \cap (1+\varpi_E \Cal{O}_{E_{\wt{\lambda}}}) \xrightarrow{\sim} \mf{s}_{1/2}$ and the overline indicates reduction modulo $\varpi_E$. If $\delta_i$ is a root of $p_i(x)$ in  $k_{F_{d_i}}$, then the component corresponding to $\mrm{U}_1(E_i/F_i)$ can be written as 
 \begin{equation}
\chi_{\lambda}|_{\mrm{U}_1(E_i/F_i)}(y) =   \chi(\Tr_{k_{F_i}/k_F}({ \delta_i(\varpi_E^{-1} \log y) }))
 \end{equation}
 
The map $E_i^{\times} \rightarrow U_1(E_i/F_i)$ induces multiplication by $2$ on $U^1(E_i)/U^2(E_i)   \xrightarrow{\sim}  k_{E_i}$, where the identification is via $1+\varpi_E x  \mapsto x$. Use the uniformizer $\varpi_E$ to identify $P_{E_i} / P_{E_i}^{(2)} \cong k_{E_i}$. Since $\epsilon_f$ and $\psi$ both factor through $S/S_{1/2}$, the map $\varphi_i$ restricted to wild inertia is given by 
 \begin{align}\label{character on wild inertia}
 P_{E_i} \rightarrow P_{E_i} / P_{E_i}^{(2)} \cong  k_{E_i} &\rightarrow \CC^{\times} \\
  x &\mapsto \chi( 2 \Tr_{k_{F_i}/k_F}(\delta_i x) ).
  \end{align}

\subsection{The admissible embedding}\label{section admissible embedding}
The final step is to describe the correct admissible embeddings 
\[
{}^L j \co {}^L  S \hookrightarrow {}^L G.
\]
In \cite[\S 2.6]{LS87} it is described how to attach to \emph{$\chi$-data} a $\wh{G}$-conjugacy class of embeddings as above, which shall be reviewed shortly. Thus our problem can be rephrased as one of determining the correct $\chi$-data, which is explained in \cite[\S 5.2]{Kal15}. One of the interesting and novel aspects of the Langlands correspondence for epipelagic representations is that the $\chi$-data depends subtly on the parameter, whereas in earlier work \cite{K13} it had been independent of the admissible embedding.

\subsubsection{Background and notation}\label{chi data background}
For the sake of exposition, we explain some background on $\chi$-data and admissible embeddings. This will also give us a chance to fix some notation which we shall need anyway. All this material can be found in \cite[\S 2]{LS87}, but it may be easier on the reader to have the relevant facts collected here, presented in a manner streamlined for our current needs.

 \begin{defn}\label{defn chi datum}
 A \emph{$\chi$-datum} is a set $\{\chi_{\alpha} \co F_{\alpha}^{\times} \rightarrow \CC^{\times} \mid \alpha \in R(S,G)\}$ satisfying:
 \begin{enumerate}
 \item $\chi_{-\lambda} = \chi_{\lambda}^{-1}$,
 \item $\chi_{\sigma\alpha} = \chi_{\alpha} \circ  \mrm{conj}_{\sigma^{-1}}$ for $\sigma \in G_F$.
 \item If $[F_{\alpha}:F_{\pm\alpha}]=2$, then $\chi_{\alpha}$ extends  the quadratic character attached to $F_{\alpha}/F_{\pm\alpha}$ by local class field theory.
 \end{enumerate} 
 \end{defn}

Fix a root datum $(\wh{B},\wh{T},\{X_{\alpha}^{\vee}\})$ for $\wh{G}$. Recall that an admissible embedding is an embedding $\xi \co {}^L S \rightarrow {}^L G$ such that 
\begin{enumerate}
\item $\xi$ maps $\wh{S} $ isomorphically to $\wh{T}$,
\item $\xi(w) \in \wh{G} \times w$. 
\end{enumerate}
Thus composing a Langlands parameter $W_F \rightarrow {}^L S$ with an admissible embedding $\xi \co {}^L S \hookrightarrow {}^LG$ produces a Langlands parameter into ${}^L G$. A $\chi$-datum for $S$ can be used to parametrize the $\wh{G}$-conjugacy classes of admissible embeddings $^{L}j \co {}^L S \rightarrow {}^LG$, as we now explain. 

Since any admissible embedding $\xi \co {}^L S \hookrightarrow {}^L G$ is already specified on $\wh{S} \rtimes 1 \subset {}^L S = \wh{S} \rtimes W_F$, it is determined by its restriction to $1 \rtimes W_F \subset {}^L S$. Since the image of $1 \rtimes W_F$ must normalize $\xi(\wh{S}) = \wh{T}$, we have for any $w \in W_F$, that $\xi(w) \in {}^L G$ is of the form 
\[
\xi(w) =\xi_0(w) \times w \in \wh{G} \rtimes W_F
\]
where $\xi_0(w) \in N(\wh{T}, \wh{G})$, and conjugation by $\xi(w)$ acts on $\wh{T}$ in the same way as the restriction of the Galois action via $W_F \rightarrow \Gamma_F$. The latter condition specifies the image of $\xi_0(w)$ in the Weyl group of $\wh{G}$ with respect to $\wh{T}$, which we denote $\Omega(\wh{T}, \wh{G})$. Let us denote this image of $\xi_0(w)$ in $\Omega(\wh{T}, \wh{G})$ by $\omega(w)$.

For a simple root $\alpha \in R(S,G)$, let $n(\alpha) = \exp(X_{\alpha}) \exp(-X_{-\alpha})\exp(X_{\alpha})$ denote the associated reflection in $\wh{G}$, or equivalently the image of 
$\begin{pmatrix} 0  & 1  \\ -1  & 0 \end{pmatrix}$ under the map $\SL_2 \rightarrow \wh{G}$ associated with regarding $\alpha$ as a coroot of $\wh{G}$. For $s_{\alpha}$ the simple reflection in $\Omega(\wh{T}, \wh{G})$ we denote $n(s_{\alpha}) = n(\alpha)$, and more generally for any $\omega \in \Omega(\wh{T}, \wh{G})$, we choose a reduced expression $\omega = \omega_{\alpha_1} \omega_{\alpha_2}  \ldots \omega_{\alpha_r} $ for $\omega$ as a product of simple reflections, and set 
\[
n(\omega)  =n(\alpha_1)n(\alpha_2) \ldots n(\alpha_r)\in N(\wh{T}, \wh{G}).
\] 
(This is independent of the reduced expression, according to \cite[p. 228]{LS87}.) This provides a \emph{set-theoretic section} $\Omega(\wh{T}, \wh{G}) \rightarrow N(\wh{T}, \wh{G})$, and can be viewed as a candidate admissible embedding ${}^L S  \hookrightarrow {}^L G$, sending $w  \mapsto n(\omega(w)) \rtimes w$. The problem is that this is not (necessarily) a \emph{homomorphism}. To make it into a homomorphism, we need to modify the elements $n(\omega(w))$ by elements of $\wh{T}$. This amounts to splitting a certain cocycle, and the $\chi$-data provide such a splitting. 

 For $\theta = \alpha \rtimes w \in \Omega(\wh{T}, \wh{G}) \rtimes W_F$, set $n(\theta) := n(\alpha) \rtimes w$. For $\theta_1, \theta_2 \in  \Omega(\wh{T}, \wh{G}) \rtimes W_F$, we have 
 \[
 n(\theta_1) n(\theta_2) = t(\theta_1, \theta_2) n(\theta_1 \theta_2) 
 \]
where $t(\theta_1, \theta_2) \in \wh{T}$ because the actions of $ n(\theta_1) n(\theta_2)$ and $ n(\theta_1 \theta_2)$ on $\wh{T}$ are equal. Then $t(\theta_1, \theta_2)$ defines a 2-cocycle on $\Omega(\wh{T},\wh{G})$ valued in $\wh{T}$. The point is that its inflation to $W_F$ is \emph{split}. A $\chi$-data furnishes a choice of splitting $r(w)$, so that $\xi(w) = r(w) n(\omega(w)) \times w$
defines an admissible homomorphism $\xi \co {}^L T \rightarrow {}^L G$. 

In order to explain this, we unfortunately have to introduce yet more terminology. Recall that a \emph{gauge} is a function $p \co R(S, G) \rightarrow \{\pm 1\}$ such that $p(-\lambda) = - p(\lambda)$. A choice of positive system of roots induces a gauge, namely the one assigning $+1$ to the positive roots, but not all gauges arise from such a choice. We can think of a gauge as a generalization of a choice of positive system. 
 
We now summarize some material from \cite{LS87} which is useful for having a general picture of what is going on, but whose rather technical details play no role here. In \cite[Lemma 2.1]{LS87} a formulate for $t(\theta_1,\theta_2)$ is obtained, and serves as motivation to define a more general 2-cocycle $t_p(\theta_1, \theta_2)$ depending on a gauge $p$, which when $p$ is specialized to the gauge associated to the positive root system associated to the based root datum of $\wh{G}$, recovers  $t(\theta_1, \theta_2)$. In \cite[Lemma 2.1.C]{LS87} it is shown that the cohomology class of the 2-cocycle $t_p(\theta_1, \theta_2)$ is independent of the choice of gauge $p$, the point being that one can use a more convenient gauge to calculate a splitting. 

Next we describe a particular splitting $r_p$ for $t_p(\theta_1, \theta_2)$, for a convenient choice of gauge $p$ \cite[\S 2.5]{LS87}. The first step is to make certain choices for coset representatives. Given a $\chi$-datum $\{ \chi_{\alpha} \co F_{\alpha}^{\times} \rightarrow \CC^{\times} \co \alpha \in R(S,G)\}$, we use local class field theory to view the characters $\chi_{\alpha}$ as characters on $W_{\alpha} := W_{F_{\alpha}}$.  Let $\epsilon \in  \Aut(R(S,G))$ be the automorphism acting by $-1$ on the roots. We initially consider the case where $\Sigma := \langle \Gamma,\epsilon \rangle$ acts transitively on the roots. Fix $\alpha \in R(S,G)$ and choose representatives $\sigma_1,\ldots,\sigma_n$ for $\Gamma_{\pm \alpha} \backslash \Gamma$. The roots are then of the form $\pm \sigma_1^{-1} \alpha, \ldots, \pm \sigma_n^{-1} \alpha$. We define a gauge $p$ by $p(\alpha')=1$ if $\alpha' = \sigma_i^{-1} \alpha$ for some $i$ (i.e. appears with a positive sign). Choose $w_i \in W_F$ mapping to $\sigma_i \in \Gamma$. Then define $u_i(w) \in W_{\pm \alpha}$ by 
\begin{equation}\label{u_i formula}
w_iw  = u_i(w) w_j.
\end{equation}
Choose $v_0 \in W_{\alpha}$ and $v_1 \in  W_{\pm \alpha }  - W_{\alpha} $ if $[F_\alpha : F_{\pm \alpha}]=2$ (otherwise we just need $v_0$). For $u\in W_{\pm \alpha }$  we define $v_0(u) \in W_\alpha$ by 
\begin{equation}\label{v_0 formula}
v_0 u = v_0(u) v_{i'}
\end{equation}
where $i'=0$ or $1$ as appropriate. We will write down a function $r_p(w)$ whose coboundary is $t_p$. Still in the case where $\Sigma$ acts transitively, we define
\begin{equation}\label{r_p formula}
r_p(w) = \prod_{i=1}^n \chi_{\alpha}(v_0(u_i(w))) \otimes \sigma_i^{-1} \alpha \in \CC^{\times} \otimes X^*(T) .
\end{equation}
This is a 1-cocycle on $W_F$ valued in $\CC^{\times} \otimes X^*(T) = \wh{T}(\CC)$. 

In the general case where $\Sigma$ need not act transitively, we define a factor $r_p^{(\Cal{O})}$ for each $\Sigma$-orbit $\Cal{O}$ as above, and then set $r_p := \prod_{\Cal{O}} r_p^{(\Cal{O})}$
with each $ r_p^{(\Cal{O})}$ defined as in the transitive case.

\begin{lemma}[{\cite[Lemma 2.5.A]{LS87}}] The coboundary of $r_p$ is $t_p$. 
\end{lemma}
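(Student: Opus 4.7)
The plan is to follow the strategy of \cite{LS87} and compute the coboundary of $r_p$ directly, matching it term by term against the explicit cocycle $t_p$. A first reduction comes from observing that both sides are multiplicative across $\Sigma$-orbits: since $r_p = \prod_{\Cal{O}} r_p^{(\Cal{O})}$ and a parallel decomposition holds for $t_p$, it is enough to treat the case in which $\Sigma$ acts transitively on $R(S,G)$, with a single fixed representative $\alpha$ and coset representatives $w_1,\dots,w_n$ for $W_{\pm\alpha}\backslash W_F$.

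The computational engine is the pair of cocycle relations for the auxiliary functions. From $w_i w = u_i(w) w_{j(i,w)}$ (formula \eqref{u_i formula}), associativity forces
\[
u_i(w_1 w_2) \;=\; u_i(w_1)\cdot u_{j(i,w_1)}(w_2),
\]
and analogously from $v_0 u = v_0(u) v_{\epsilon(u)}$ (formula \eqref{v_0 formula}) one obtains $v_0(u u') = v_0(u) \cdot v_{\epsilon(u)}(u')$, where $\epsilon(u)\in\{0,1\}$ records whether $u$ lies in $W_\alpha$ or in its non-trivial coset. Plugging these into the formula \eqref{r_p formula} for $r_p$ and expanding
\[
(d r_p)(w_1,w_2) \;=\; r_p(w_1)\cdot w_1\bigl(r_p(w_2)\bigr)\cdot r_p(w_1 w_2)^{-1},
\]
the Galois twist on $\CC^\times \otimes X^*(T)$ converts the factor $\sigma_i^{-1}\alpha$ in $w_1(r_p(w_2))$ into $\sigma_{j(i,w_1)}^{-1}\alpha$, precisely aligning with the index shift in the cocycle relation. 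After this relabelling, the $\chi_\alpha$-contributions from $r_p(w_1 w_2)^{-1}$ split: they cancel the contribution of $r_p(w_1)$ exactly, and they cancel the contribution of $w_1(r_p(w_2))$ up to an error term, namely those indices $i$ for which $u_i(w_1)\in W_{\pm\alpha}\setminus W_\alpha$, where $v_0(u_{j(i,w_1)}(w_2))$ is replaced by $v_1(u_{j(i,w_1)}(w_2))$.

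The crucial identification of this residual error with $t_p(w_1,w_2)$ then rests on condition (3) of Definition \ref{defn chi datum}: since $\chi_\alpha$ extends the quadratic character of $F_\alpha/F_{\pm \alpha}$, the ratio $\chi_\alpha(v_1(u))/\chi_\alpha(v_0(u))$ depends only on the $W_\alpha$-coset of $u$ and is a definite sign $\pm 1$. Tensoring these signs against the roots $\sigma_i^{-1}\alpha$ produces a $\wh{T}$-valued $2$-cocycle on $W_F$, which we compare to the explicit formula for $t_p$ extracted from \cite{LS87} Lemma 2.1. The comparison is facilitated by the particular choice of gauge $p$ made just before \eqref{r_p formula} (namely $p(\sigma_i^{-1}\alpha)=+1$), which synchronizes the sign convention built into $t_p$ with the one produced by the $\chi_\alpha$-ratios.

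The main obstacle, and essentially the entire content of the lemma, is the bookkeeping in the last step: matching the residual signs to the gauge-induced cocycle $t_p$ requires a careful case analysis over pairs $(i, w_1)$ according to whether $u_i(w_1)\in W_\alpha$, whether $[F_\alpha:F_{\pm\alpha}]=2$, and how $\sigma_i^{-1}\alpha$ interacts with the action of $\omega(w_1)$. Once one organizes these cases by $\Sigma$-orbit and uses the chosen gauge to normalize signs, every term on each side is accounted for, establishing $d r_p = t_p$.
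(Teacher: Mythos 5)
The paper does not prove this lemma; it cites \cite{LS87} Lemma 2.5.A verbatim and immediately moves on to use it, so there is no in-paper argument to compare against. What you have written is an attempt to reconstruct the Langlands--Shelstad proof from first principles, and as a structural sketch it captures the right ingredients: the reduction to a transitive $\Sigma$-orbit, the two auxiliary cocycle identities for $u_i$ and $v_0$, the cancellation pattern in the expansion of $dr_p$, and the role of condition (3) of Definition \ref{defn chi datum} together with the choice of gauge in producing the residual sign.

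A few places in the write-up are imprecise in a way that matters if this were to be turned into a full verification. First, the assertion that the Galois twist in $w_1\bigl(r_p(w_2)\bigr)$ simply relabels $\sigma_i^{-1}\alpha$ as $\sigma_{j(i,w_1)}^{-1}\alpha$ is not quite right: one has $w_1\bigl(\sigma_{j(i,w_1)}^{-1}\alpha\bigr) = \pm\,\sigma_i^{-1}\alpha$, with sign $+$ precisely when $u_i(w_1)\in W_\alpha$; since $c\otimes(-\beta) = c^{-1}\otimes\beta$ in $\CC^\times\otimes X^*(T)$, this sign shows up as an exponent on the $\chi_\alpha$-factor, not merely an index shift, and it is exactly this exponent that must be reconciled against the $v_0$-versus-$v_1$ discrepancy you later invoke. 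Second, you appeal to an ``explicit formula for $t_p$ extracted from \cite{LS87} Lemma 2.1'' without recording it, so the final identification $dr_p = t_p$ is asserted rather than checked. You do acknowledge in your last paragraph that the sign bookkeeping is the substantive content, which is fair, but a genuine proof would need to display the formula for $t_p$ and run the case analysis you describe; as written, the matching step is a promissory note rather than an argument.
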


To summarize, the corresponding admissible embedding is $\xi  \co {}^L S  \hookrightarrow {}^L G$ sending $w  \mapsto  r_p(w)n(\omega(w)) \rtimes w$, where $r_p$ is as above.

\subsubsection{The $\chi$-data of epipelagic parameters}

We now describe Kaletha's prescription for extracting the $\chi$-data associated to an epipelagic parameter  $\varphi \co W_F \rightarrow {}^L G$ \cite[\S 5.2]{Kal15}. Given $\varphi$, we need to prescribe the characters $\chi_{\alpha} \co F_{\alpha}^\times \rightarrow \CC^{\times}$ satisfying the conditions in Definition \ref{defn chi datum}, the most important of which is that $\chi_{\alpha}$ be trivial on $N_{F_{\alpha}/F_{\pm \alpha}}(F_{\alpha}^{\times})$. Obviously $\chi_{\alpha}$ can only be non-trivial if $\alpha$ is symmetric, i.e. $F_{\alpha} \neq  F_{\pm \alpha}$, so we restrict our attention to symmetric $\alpha$.

  If $\alpha$ is symmetric but inertially asymmetric, then there exists a unique unramified character satisfying the desired conditions of $\chi$-datum, which is what one takes for $\chi_{\alpha}$. However, in our case of interest all roots are inertially symmetric, so this will never apply. 
  
  If $\alpha$ is inertially symmetric, then there are exactly two tamely ramified characters that satisfy the conditions of Definition \ref{defn chi datum}, and we need to use the information encoded in $\varphi$ to specify the right one \cite[p.40-41]{Kal15}. It is enough to specify the character on a uniformizer $\varpi  \in F_{\alpha}^{\times}$, since the collection of all uniformizers generate the multiplicative group. Restricting $\varphi$ to the wild inertia subgroup $P_F$, and composing with the root $\alpha$ of $\wh{T}$, we have a homomorphism 
  \[
  P_F \hookrightarrow W_F \xrightarrow{\varphi} \wh{T} \xrightarrow{\alpha} \CC^{\times}.
  \]
Viewing $P_F \cong P_{F_{\alpha}} \subset W_{F_{\alpha}}$, this composite extends to $W_{F_{\alpha}}$, hence induces by local class field theory a character of the 1-unit group $
 \xi_{\alpha} \co U_{F_{\alpha}}^1 \rightarrow \CC^{\times}$. By assumption this homomorphism is trivial on $U_{F_{\alpha}}^2$. Using the choice of uniformizer $\omega$ we obtain a character
 \begin{equation}\label{character depending on uniformizer}
 \xi_{\alpha, \omega} \co k_{F_{\alpha}} \xrightarrow{x \mapsto \omega x + 1 } U_{F_{\alpha}}^1 / U_{F_{\alpha}}^2 \xrightarrow{\xi_{\alpha}} \CC^{\times}.
 \end{equation}
 Then we set 
 \begin{equation}\label{epipelagic chi data} 
 \chi_{\alpha}(\omega) = \lambda_{F_{\alpha}/F_{\pm \alpha}}(\xi_{\alpha, \omega})^{-1}
 \end{equation}
 where $\lambda$ is the Langlands $\lambda$-function of \cite[Theorem 2.1]{Lang76}. In the case at hand, namely that of a tamely ramified quadratic extension, there is a concrete description of $\lambda$ as a normalized Gauss sum: if $q := \# k_{F_{\pm \alpha}}$, then \cite[Lemma 1.5]{BH05}
 \begin{equation}\label{lambda function}
\lambda_{F_{\alpha}/F_{\pm \alpha}}(\xi_{\alpha, \omega}) = q^{-1/2} \sum_{x \in k_{F_{\pm \alpha}}^{\times}} \leg{x}{q} \xi_{\alpha, \omega}(x).
 \end{equation}
 Note that this a (fourth) root of unity.

 \subsubsection{Computation of the admissible embedding} 	\label{section compute admissible embedding}
We now undertake the task of ``computing'' the admissible embedding 
\[
{}^L j \co {}^L S \rightarrow {}^L G.
\]
We begin with some general observations. Each ``anti-coboundary'' of $t_p$ is tautologically a 1-cochain with coboundary $r_p$, so the $\wh{G}$-conjugacy classes of such splittings is a torsor for $H^1(W_F,\wh{S})$. Since by Shapiro's Lemma we have $H^1(W_F, \wh{S}) =  \bigoplus_i H^1(W_{F_i}, \wh{\mrm{U}}_1(E_i/F_i))$, it suffices to specify a system of classes in $H^1(W_{F_i}, \wh{\mrm{U}}_1(E_i/F_i))$ for each $i$. By the Local Langlands Correspondence for tori, the datum of a cohomology class in $H^1(W_{F_i}, \wh{\mrm{U}}_1(E_i/F_i))$ is equivalent to that of a character 
\begin{equation}\label{eqn: phi_i}
\phi_i \co \mrm{U}_1(E_i/F_i) = \{ x \in E_i^{\times} \mid N_{E_i/F_i}(x) = 1\} \rightarrow \CC^{\times}.
\end{equation}
In fact, since by construction \cite[\S 5.2]{Kal15} the $L$-embedding is made with \emph{tamely ramified} $\chi$-data, each such character factors through the prime-to-$p$ quotient of $\mrm{U}_1(E_i/F_i)$, which is just $\{ \pm 1\}$.

Since $\wh{\mrm{U}}_1(E_i/F_i) \cong \CC^{\times}$ as a group, with the Galois action factoring (non-trivially) through $\Gal(E_i/F_i) \cong \Z/2\Z$, the restriction map $ H^1(W_{F_i}, \wh{\mrm{U}}_1(E_i/F_i))  \hookrightarrow H^1(W_{E_i}, \CC^{\times}) $ is injective, so it suffices to describe the image in $H^1(W_{E_i}, \CC^{\times})  $ for each $i$. On the other side of the Local Langlands Correspondence, this corresponds to inflating the character via the map $E_i^{\times} \rightarrow  \mrm{U}_1(E_i/F_i)$ given by $x \mapsto x\ol{x}^{-1}$ (as we discussed in \S \ref{LLC tori}). It suffices to compute the value of the inflated character on $\varpi_{E}$, since $\ol{\varpi_E } = - \varpi_E$ so that $\varpi_E \ol{\varpi_E}^{-1}$ generates the prime-to-$p$ quotient of $\mrm{U}^1(E_i/F_i)(F_i)$.
 
We now undertake one last simplification. Let $\sigma_E = \Art_E(\varpi_E)$, so $\sigma_E$ is a lift of the (geometric) Frobenius on $k_E$. Recall the Verlagerung functoriality of local class field theory: 
\[
\begin{tikzcd}
E_i^{\times} \ar[r, "\Art_{E_i}"] & W_{E_i}^{\ab}  \\
E^{\times} \ar[u, hook] \ar[r, "\Art_E"]&  W_E^{\ab}  \ar[u, "\Ver_{E_i/E}"']
\end{tikzcd}
\]
This implies at $\Art_{E_i}(\varpi_E) = \Ver_{E_i/E}(\sigma_E)$. To compute $\Ver_{E_i/E}(\sigma_E)$ we note that we may take $1, \sigma_E,  \sigma_E^2, \ldots,\sigma_E^{d_i-1}$ as coset representatives for $W_E/W_{E_i} \cong \Z/d_i \Z$. Using these representatives, it is trivial to calculate that $\Ver(\sigma_E)= \sigma_E^{d_i}$. The upshot is that, if we view the inflation of  $\phi_i$ to $E_i^{\times}$ as a cocycle in $H^1(W_{E_i},\CC^{\times})$ via local class field theory, then the embedding ${}^L j$ will be determined by computing $\phi_i$ on $\sigma_E^{d_i}$. \\

Now we finally make the embedding explicit. We need to compute the $\chi$-datum $\{\chi_{\alpha}\co \alpha \in R(S,G)\}$ and then the admissible embedding $r_p(\sigma_E^{d_i})$. The first task is to calculate each factor $r_p$ from \eqref{r_p formula}. For this we have to organize the roots into Galois orbits. Recall that the roots were denoted $\alpha^{(i,j)}_{r,s}$. We divide into cases according to whether or not $i = j$. \\

\noindent \textbf{Case 1: $i \neq j$.} As we already observed in \S \ref{toral invariant}, there are $(d_i, d_j)$ Galois orbits of roots of the form $\alpha_{r,s}^{(i,j)}$, and the size of each orbit is $[d_i,d_j]$, so 
we have $F_{\alpha^{(i,j)}_{r,s}} = E_{[d_i,d_j]}$, the unramified extension of $E$ of degree $[d_i,d_j]$. Since the conjugation of $E/F$ acts the roots as negation, we have $F_{ \pm  \alpha_{r,s}^{(i,j)}} = F_{[d_i, d_j]}$. Thus we have   
\[
W_{\pm \alpha_{r,s}^{(i,j)}} \backslash W \cong \Gal(F_{[d_i, d_j]}/F)\cong \Z/[d_i, d_j]\Z.
\]
Let $\Cal{O}^{(i,j)}_{r,s}$ denote the orbit of $\alpha^{(i,j)}_{r,s}$. 

We must now choose coset representatives. We choose representatives $w_1, \ldots, w_{[d_i, d_j]}$ for $W_{\pm \alpha_{r,s}^{(i,j)}} \backslash W$ to be the powers of the lift of Frobenius, say $w_i = \sigma_E^{i-1}$. We then choose coset representatives $v_0 = \Id$ and $v_1$ arbitrary for $W_{\alpha_{r,s}^{(i,j)}} \backslash W_{\pm \alpha_{r,s}^{(i,j)}}$. 

We now compute using \eqref{u_i formula} and \eqref{v_0 formula}. First applying $\eqref{u_i formula}$ to $\sigma_E^a$ for $a<[d_i,d_j]$, we see that $u_i(\sigma_E^a)$ is determined by $\sigma_E^{i-1} \sigma_E^a = u_i(\sigma_E^a) \sigma_E^{a+i-1\mod{[d_i,d_j]}}$, so that 
\[
u_i(\sigma_E^a) = \begin{cases} 1 & i \leq [d_i,d_j]-a, \\ \sigma_E^{[d_i, d_j]} & \text{otherwise.} \end{cases} 
\]
Since each $u_i(\sigma_E^a)$ already lies in $W_{\alpha}$, we have $v_0(u_i(w)) = u_i(w)$. So for this $w$ we have, according to \eqref{r_p formula}
\[
r_p^{\Cal{O}^{(i,j)}_{r,s}}(\sigma_E^{d_i}) = \prod_{t=1}^{d_i} \chi_{\alpha^{(i,j)}_{r,s}}(\sigma_E^{[d_i,d_j]}) \otimes \alpha^{(i,j)}_{r+t, s+t}.
\]
Note that $r$ is valued in $\Z/d_i \Z$ and $s$ is valued in $\Z/d_j \Z$. So as $t$ runs from $1$ to $d_i$, $r+t$ takes on every value in $\Z/d_i \Z$ exactly once. Write $\pi_i \co \wh{S} \rightarrow \wh{S}_i := \wh{\mrm{U}}_1(E_i/F_i)$ for the projection onto the $i$th component. Recalling that $\alpha^{(i,j)}_{r,s} = \alpha^{(i,j)}_{r,s} = \lambda^{(i)}_r - \lambda^{(j)}_s$, the projection of this cocycle to $\wh{S}_i$ via $\pi_i$ is
\begin{equation}\label{eq horrible1}
\pi_i(r_p^{\Cal{O}^{(i,j)}_{r,s}}(\sigma_E^{d_i}))  = \chi_{\alpha^{(i,j)}_{r,s}}(\sigma_E^{[d_i,d_j]}) \otimes \sum_{r=1}^{d_i} \lambda^{(i)}_r.
\end{equation}
We write $\Delta_{i} := \sum_{r=1}^{d_i} \lambda^{(i)}_r \in X^*(S_i)$, since this is the cocharacter corresponding to the  ``diagonal'' embedding in $\wh{S}_i \cong (\CC^{\times})^{d_i}$. Then we rewrite \eqref{eq horrible1} as 
\[
\pi_i(r_p^{\Cal{O}^{(i,j)}_{r,s}}(\sigma_E^{d_i}))  = \Delta_i(\chi_{\alpha^{(i,j)}_{r,s}}(\sigma_E^{[d_i,d_j]}) ) \in \wh{S}_i(\CC^{\times}).
\]
Now it only remains to compute $\chi_{\alpha^{(i,j)}_{r,s}}(\sigma_E^{[d_i,d_j]})$. For ease of notation, we abbreviate $\alpha : = \alpha^{(i,j)}_{r,s}$ for the rest of this computation. We also set $k_d :=  k_{E_{d_{ij}}}= k_{F_d}$ for the extension of $ k := k_F$ of degree $d$. By \eqref{epipelagic chi data} we should define $\chi_{\alpha}(\omega) = \lambda_{F_{\alpha}/F_{\pm \alpha}} (\xi_{\alpha, \omega})^{-1}$, where $\xi_{\alpha, \omega} \co k_{[d_i,d_j]} \rightarrow \CC^{\times}$ is as in \eqref{character depending on uniformizer}. This will take some painful work to unravel. By the same Verlagerung computation as before, $\sigma_E^{[d_i,d_j]}$ corresponds to $\varpi_E$ under the local Artin map for $E_{[d_i,d_j]}$. To compute $\xi_{\alpha, \varpi_E}$, we refer back to the diagram from \S \ref{LLC tori}. 
\[
\begin{tikzcd}
\Hom(S(F), \CC^{\times}) \ar[dd, dotted] & \prod_i \Hom(\mrm{U}_1(E_i/F_i), \CC^{\times}) \ar[r, hook] \ar[l, equals]  \ar[dd, leftrightarrow, "\text{LLC}"] &  \prod_i \Hom( E_i^{\times},\CC^{\times})  \ar[d, leftrightarrow, "\text{LLC}"] \\
 & & \prod_i H^1(W_{E_i}, \CC^{\times}) \ar[d, equals] \\
H^1(W_F, \wh{S}) & \prod_i H^1(W_F, \wh{\mrm{U}}_1(E_i/F_i) ) \ar[r, hook] \ar[l, equals]&  \prod_i  H^1(W_F,  \Ind_{W_{E_i}}^{W_F} \CC^{\times})   \\
\end{tikzcd}
\]
The character of $S(F)$ gives, tracing through the diagram, an element of $H^1(W_F,  \prod_i \wh{\mrm{U}}_1(E_i/F_i))$. The torus $\prod_i \wh{\mrm{U}}_1(E_i/F_i) $ is embedded as $\wh{S} \xrightarrow{\sim} \wh{T} \subset \GL_n$ in the eventual Langlands correspondence, and according to \eqref{character depending on uniformizer} we need to understand the composition $P_F \hookrightarrow W_F \rightarrow \wh{S} \xrightarrow{\alpha} \CC^{\times}$.

Note that the identification (Shapiro's Lemma) 
\[
H^1(W_F,  \Ind_{W_{E_i}}^{W_F} \CC^{\times})  \xrightarrow{\sim} H^1(W_{E_i}, \CC^{\times}) 
\]
is given by restriction to $W_{E_i}$, and then evaluation of $f \in \Ind_{W_{E_i}}^{W_F} \CC^{\times} = \{ f \co W_F \rightarrow \CC^{\times} \co \ldots\}$ on the identity. In these terms, the restriction of $\alpha$ to $\Ind_{W_{E_i}}^{W_F} \CC^{\times}$ is given by the map $ \Ind_{W_{E_i}}^{W_F} \CC^{\times} \rightarrow \CC^{\times}$ evaluating on $\sigma_E^r$. In other words, we have a commutative diagram 
\[
\begin{tikzcd}
H^1(W_{E_i}, \CC^{\times})  \ar[r, "\Res"] &   H^1(P_{E_i}, \CC^{\times} ) \ar[r, "\mrm{conj}_{\sigma_E^{-r}}^*"] & H^1(P_{E_i}, \CC^{\times} )  \\
H^1(W_F,  \Ind_{W_{E_i}}^{W_F} \CC^{\times}) \ar[u, equals] \ar[r, "\Res"]  & H^1(P_F, \Ind_{W_{E_i}}^{W_F} \CC^{\times}) \ar[r, "\alpha"] & H^1(P_F, \CC^{\times}) \ar[u,"\Res"]
\end{tikzcd}
\]
This shows that the character in $H^1(P_F, \CC^{\times}) = \Hom(P_F, \CC^{\times})$ corresponding to $(S, \chi)$ is such that its restriction to $P_{E_i}$ gives the character in $H^1(P_{E_i}, \CC^{\times} )$ that we determined in \eqref{character on wild inertia}, pre-composed with conjugation by $\sigma_E^{-r}$. By the description in \eqref{character on wild inertia}, we conclude that this restriction is 
 \begin{equation}
x \mapsto   \chi(\Tr_{k_{F_{i}}/k_F}\ol{( \delta_i  x^{q^{-r}}) })^2= \chi(\Tr_{k_{F_{i}}/k_F}\ol{( \delta_i ^{q^{r}} x) })^2.
 \end{equation}

We still have not determined $\xi_{\alpha, \varpi_E}$. It is the character on $k_{F_{\alpha}}  \xrightarrow{\sim} U^1_{F_\alpha}/U^2_{F_{\alpha}}$, identified via $x  \mapsto 1+\varpi_E x$, corresponding to 
\[
\begin{tikzcd}
U^1_{F_\alpha} \ar[r, "\Art_{F_{\alpha}}"] \ar[d, "\Nm_{F_{\alpha}/F}"'] & P_{F_{\alpha}}  \ar[d, "\sim"] \\
U^1_{F} \ar[r] &  P_F \ar[r]&  \CC^{\times}
 \end{tikzcd}
\]
Note that under the identification above, the norm map $U^1_{F_\alpha}/U^2_{F_{\alpha}} \rightarrow U^1_F/U^2_F$ corresponds to $\tr \co k_{F_{\alpha}} \rightarrow k_F$. Furthermore, since we have identified the restriction of the character to $P_{E_i}$ and $E_i \subset F_{\alpha}$, we use this to see that $\xi_{\alpha, \varpi_E}$ is given by 
\[
\xi_{\alpha, \varpi_E}(x) = \frac{\chi(\Tr_{k_{d_i}/k} (\delta_i^{q^{r-1}} \Tr_{k_{[d_i,d_j]}/k_{d_i}} x))^2}{ \chi(\Tr_{k_{d_j}/k}(\delta_j^{q^{s-1}} \Tr_{k_{[d_i,d_j]}/k_{d_j}} x))^2} = \chi(\Tr_{k_{[d_i,d_j]}/k} ([\delta_i^{q^{r-1}}-\delta_j^{q^{s-1}} ]x))^2.
\]

Sadly we are not done yet: we still need to compute $\chi_{\alpha}(\varpi_E) = \lambda_{F_{\alpha}/F_{\pm \alpha}} (\xi_{\alpha, \varpi_E})^{-1}$. By \eqref{lambda function} we have 
\begin{equation}\label{lambda value 1}
\lambda_{F_{\alpha}/F_{\pm \alpha}} (\xi_{\alpha, \omega})^{-1} = \mrm{arg} \left( \sum_{x \in k_{[d_i,d_j]}^{\times}} \leg{x}{q^{[d_i,d_j]}} \chi (\Tr_{k_{[d_i,d_j]}/k} ([\delta_i^{q^{r-1}}-\delta_j^{q^{s-1	}} ] x))^2 \right)^{-1}
\end{equation}
where for $z\in \CC^{\times}$ we write $\arg(z) = \frac{z}{||z||} \in S^1$.

We simplify this terrifying expression slightly using the Hasse-Davenport relation: 

\begin{lemma}[Hasse-Davenport, {\cite[p.158-162]{IR90}}] Let $d \geq 1$ and $\chi$ be an additive character of $\F_q$. Then 
\[
- \sum_{\F_{q^d}} \leg{x}{q^d} \chi(\Tr_{\F_{q^d}/\F_q} x) = \left( - \sum_{x \in \F_q} \leg{x}{q} \chi(x) \right)^d.
\]
\end{lemma}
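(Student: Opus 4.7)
The statement is the classical Hasse--Davenport product relation specialized to the quadratic character. Let $\eta_d$ denote the unique order-two character on $\F_{q^d}^{\times}$, extended by $\eta_d(0)=0$, and set
\[
g_d := \sum_{x \in \F_{q^d}} \eta_d(x)\, \chi(\Tr_{\F_{q^d}/\F_q}(x)),
\]
so the lemma is the equality $-g_d = (-g_1)^d$. A preliminary observation, which I would record first, is that $\eta_d = \eta_1 \circ \Nm_{\F_{q^d}/\F_q}$: since the norm is surjective, $\eta_1 \circ \Nm$ is a nontrivial character of exponent two on $\F_{q^d}^\times$, and it must therefore coincide with $\eta_d$. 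This reduces the claim to a clean identity involving a multiplicative character precomposed with norm and an additive character precomposed with trace, which is the general shape of Hasse--Davenport.

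The cleanest route I would pursue is cohomological. Let $\Cal{F}$ be the rank-one lisse $\ol{\Q}_\ell$-sheaf on $\mathbb{G}_m$ over $\F_q$ obtained as the tensor product of the Kummer sheaf attached to $\eta_1$ and the Artin--Schreier sheaf attached to $\chi$. Since $\Cal{F}$ is a nontrivial character sheaf, its compactly supported cohomology is concentrated in degree one. The Grothendieck--Lefschetz trace formula then identifies the trace of geometric Frobenius on $H^1_c(\mathbb{G}_{m,\ol{\F}_q}, \Cal{F})$ with $-g_1$. Base-changing to $\F_{q^d}$ replaces geometric Frobenius by its $d$-th power, so this trace becomes $(-g_1)^d$, while the trace formula applied over $\F_{q^d}$ identifies the same number with $-g_d$.

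An elementary alternative, closer in spirit to the Ireland--Rosen reference cited in the lemma, is to form the generating series $\sum_{d \geq 1} g_d\, T^d / d$ and reorganize it by the Frobenius orbits on $\ol{\F}_q^\times$: orbits of length $e$ dividing $d$ contribute in a combinatorially tractable way, and one matches the resulting expression with the logarithmic derivative of a suitable Euler product over closed points, extracting the stated identity by comparing coefficients of $T^d/d$. The main obstacle in either approach is the sign bookkeeping: both sides of the stated formula carry negative signs, reflecting the omission of $x=0$ from the Gauss sum and the resulting shift in the trace formula / Euler product, and these must be propagated consistently through the base change or the product expansion.
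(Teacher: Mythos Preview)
Your proposal is correct. The paper does not actually supply a proof of this lemma: it is stated with a bare citation to Ireland--Rosen and then immediately applied. So there is nothing to compare against on the paper's side.

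For the record, both of your routes are valid. The reduction step you flag first---that the quadratic character of $\F_{q^d}$ is the quadratic character of $\F_q$ composed with the norm---is exactly what is needed to recognize the displayed identity as an instance of the general Hasse--Davenport lifting relation for a multiplicative character $\eta$ and additive character $\chi$ on $\F_q$. The $\ell$-adic argument via the trace formula on $H^1_c$ of the Kummer--Artin--Schreier sheaf is the cleanest modern proof and handles the sign automatically (the minus sign is the $(-1)^1$ from degree one cohomology). The elementary generating-series argument you sketch is precisely the one in the cited pages of Ireland--Rosen, where one writes the $L$-function $L(T,\eta\chi)$ as both an Euler product over closed points and a degree-one polynomial $1 + g_1 T$, then reads off $g_d$ from the logarithmic derivative. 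Either would serve here; the paper simply takes the result as known.
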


Applying this to \eqref{lambda value 1}, we finally obtain 
\begin{equation}\label{eqn case 1}
\chi_{\alpha}(\varpi_E) = (-1)^{[d_i,d_j]-1}  \leg{\delta_i^{q^{r-1}}-\delta_j^{q^{s-1}}}{q^{[d_i,d_j]}} \left(  \text{arg}  \sum_{x \in k} \leg{x}{q} \chi(x)^2 \right)^{-[d_i,d_j]}.
\end{equation}
Recall that there are $(d_i,d_j)$ such orbits, corresponding to $r-s = 1, \ldots, (d_i,d_j)$.\\

To summarize: the contribution from the roots of Case 1, namely those $\alpha^{(i,j)}_{r,s}$ with $i \neq j$, to $\pi_i(r_p(\sigma_E^{d_i}))$ is
\begin{equation}\label{eq: case 1 summary}
\Delta_i \otimes    \prod_{r-s = 1 }^{(d_i,d_j)} \prod_{i \neq j}  \left( (-1)^{[d_i,d_j]-1}  \leg{\delta_i^{q^{r-1}}-\delta_j^{q^{s-1}}}{q^{[d_i,d_j]}} \left( \text{arg }  \sum_{x \in k} \leg{x}{q} \chi(x)^2 \right)^{-[d_i,d_j]} \right) .
\end{equation}
We have finally finished Case 1. The exhausted reader may take comfort in the fact that the second case is significantly simpler.\\

\noindent \textbf{Case 2}. We consider roots of the form $\alpha^{(i,i)}_{r,s} = \lambda^{(i)}_r-\lambda^{(i)}_s$.  The Galois action factors through 
\[
\Gal(E_{d_i}/F) = \langle \tau \rangle \rtimes \{ \sigma_E^t \}_{t=1, \ldots, d_i}.
\]
with action given by 
\begin{align*}
\tau(\alpha^{(i,i)}_{r,s}) &= - \alpha^{(i,i)}_{r,s} \\
\sigma_E(\alpha^{(i,i)}_{r,s}) &= \alpha^{(i,i)}_{r+1,s+1}
\end{align*} 
where the subscripts are always considered modulo $d_i$. 

The orbit of $\alpha^{(i,i)}_{r,s}$ under Frobenius never meets $-\alpha^{(i,i)}_{r,s}$ \emph{unless} $r-s \equiv d_i/2 \mod{d_i}$ (implicitly forcing $d_i$ to be even). So this breaks us into two subcases. \\

\noindent \textbf{Case 2(a): $r-s \not\equiv d_i/2 \mod{d_i}$.} Arguing as above, we find that 
\begin{align*}
r_p^{\Cal{O}^{(i,i)}_{r,s}}(\sigma_E^{d_i}) &= \prod_{t=1}^{d_i} \chi_{\alpha^{(i,i)}_{r,s}}(\sigma_E^{d_i}) \otimes \alpha^{(i,i)}_{r-t, s-t} = \chi_{\alpha^{(i,i)}_{r,s}}(\sigma_E^{d_i}) \otimes \sum_{t=1}^{d-i} \left(\lambda^{(i)}_{r+t} - \lambda^{(i)}_{s+t} \right). 
\end{align*}
As $t$ runs from $1$ to $d_i$, both $r-t$ and $s-t$ assume every value mod $\Z/d_i \Z$ exactly once, so that the last sum cancels out to $0$. Therefore, this case contributes trivially to $r_p$. \\

\noindent \textbf{Case 2(b): $i=j$, $r-s \equiv d_i/2 \mod{d_i}$.} In this case we have $\sigma_E^{d_i/2} \alpha^{(i,i)}_{r,s} = -\alpha^{(i,i)}_{r,s}$. Abbreviating $\alpha := \alpha^{(i,i)}_{r,s}$, we find that $F_{\pm \alpha}  = F_{d_i/2}$ while $F_{\alpha} \subset E_{d_i}$ is the fixed field of $\tau \circ \sigma_E^{d_i/2}$, which is the quadratic ramified extension $E_{d_i/2}'$ of $F_{d_i/2}$ distinct from $E_{d_i/2}$. 

We now proceed in the usual manner to compute $r_p$. We begin by picking cosets $\{w_i := \sigma_E^{i-1}\}_{i=1 , \ldots,  d_i/2}$ for $W_{\pm \alpha} \backslash W_F$. Then we take $v_0 = 1$ and arbitrary $v_1$ for representatives of $W_{\alpha} \backslash W_{\pm \alpha}$. We find that $v_0(u_i(\sigma_E^{d_i})) = \sigma_E^{d_i}$. 

We must then determine
\begin{align*}
r_p^{\Cal{O}} (\sigma_E^{d_i}) = \prod_{t=1}^{d_i/2} \chi_{\alpha}(\sigma_E^{d_i}) \otimes \alpha^{(i,i)}_{r+t, s+t} = \chi_{\alpha}(\sigma_E^{d_i}) \otimes \sum_{t=1}^{d_i/2}  \left( \lambda^{(i)}_{r+t} - \lambda^{(i)}_{r-d_i/2+t} \right) = \Delta_i(\chi_{\alpha}(\sigma_E^{d_i}))
\end{align*}
where the third equality uses the fact that $\chi_{\alpha}(\sigma_E^{d_i}) = \pm 1$ is equal to its inverse. It remains to compute $\chi_{\alpha}(\sigma_E^{d_i})$. Using norm functoriality for local class field theory,
\[
\begin{tikzcd}
F_{\alpha}^{\times} \ar[r, "\Art_{F_{\alpha}}"] \ar[d, "\Nm_{F_{\alpha}/F}"'] & W_{F_{\alpha}}^{\ab} \ar[d] \\
F^{\times} \ar[r, "\Art_F"]  & W_F^{\ab} 
\end{tikzcd}
\]
we see that $\chi_{\alpha}(\sigma_E^{d_i}) = \pm 1$ is the value of the character $\chi_{\alpha}$ from the $\chi$-datum on an element of $F_{\alpha}^{\times}$ whose norm down to $F$ coincides with $N_{E/F}(\varpi_E^{d_i}) = \varpi_E^{2d_i}$. A convenient such choice is $-\varpi_E^2$, which even lies in $F$ since we arranged that $\Nm_{E/F} (\varpi_E) = -\varpi_E^2$ (cf. \S \ref{sec: notation}). This $-\varpi_E^2$ is a uniformizer of $F_{d_i/2}$, which is the norm of a uniformizer (namely $\varpi_E$) from the ramified quadratic extension $E_{d_i/2}/F_{d_i/2}$, so it cannot be a norm from $E_{d_i/2}'$ to $F_{d_i/2}$. Since $\chi_{\alpha}$ always extends the quadratic character on $F_{\pm \alpha}^{\times}$ corresponding by local field theory to $F_{\alpha}/F_{\pm \alpha}$, this shows that $\chi_{\alpha}(\sigma_E^{d_i}) = -1$.  

In summary, the contribution of the roots from Case 2 to $r_p(\sigma_E^{d_i})$ is simply $-1$ if $d_i$ is even and $1$ if $d_i$ is odd, which we can write uniformly  as $(-1)^{d_i-1}$. \\

Finally, putting together the computations from the two cases  (cf. \eqref{eqn case 1}) we find that 
\begin{align*}
\pi_i(r_p(\sigma_E^{d_i}) ) &= (-1)^{d_i-1} \prod_{r-s = 1 }^{(d_i,d_j)} \prod_{i \neq j}  \left( (-1)^{[d_i,d_j]-1}  \leg{\delta_i^{q^{r-1}}-\delta_j^{q^{s-1}}}{q^{[d_i,d_j]}} \left( \text{arg }  \sum_{x \in k} \leg{x}{q} \chi(x)^2 \right)^{-[d_i,d_j]} \right)   \\
\end{align*}
which we can simplify slightly to
\begin{equation}\label{r_p computed}
\pi_i(r_p(\sigma_E^{d_i}))= (-1)^{d_i-1}  \left( \text{arg}  \sum_{x \in k} \leg{x}{q} \chi(x)^2 \right)^{-d_i(n-d_i)}  \prod_{i \neq j} (-1)^{d_id_j-(d_i,d_j)}  \prod_{t=1}^{(d_i,d_j)} \leg{\delta_i - \delta_j^{q^{t-1}}}
{q^{[d_i,d_j]}} 
\end{equation}

\subsection{Assembly of Langlands parameters}

 In this section we collect the raw material from the computations to describe the Langlands parameter attached to $\rho_{\lambda, \psi}$. In principle this should allow us to describe the $L$-packets as well. Roughly speaking, what we would like is to view all the ingredients - the toral invariant, the $L$-embedding, and the character $\psi$ that was used to construct the epipelagic representation - inside a common group, in fact the group $C_{\lambda}^{\vee}$ which is Pontrjagin dual to $C_{\lambda}$, and to cut out the L-packets as conditions on their position within $C_{\lambda}^{\vee}$. 

First let's recall the broad picture. An irreducible epipelagic representation is attached to a stable functional $\lambda$ and a character $\psi$ of $C_{\lambda} := \prod_i  \mu_2$. Under the local Langlands correspondence, we attach to $(\lambda, \psi)$ a character of $S$, hence a Langlands parameter $\varphi_{\chi} \co W_F \rightarrow {}^L S$. The parameter is determined on wild inertia by \eqref{character on wild inertia}. Note that the expression in \eqref{character on wild inertia} only depends on the stable orbit of $\lambda$, and not on $\psi$. 

The character of $S$ has the form $\chi = \chi_{\lambda} \cdot \psi \cdot \epsilon_\lambda$, where $\epsilon_\lambda $ and $\psi$ factor through $S/S_{1/2} \cong C_{\lambda}$. Thus 
\[
\varphi_{\chi} = \varphi_{\chi_{\lambda}} \varphi_{\psi} \varphi_{\epsilon_\lambda}.
\]

\begin{lemma}\label{tame cohomology equals character group}
The group is $C_{\lambda}^{\vee}$ is isomorphic to the subgroup of tamely ramified classes in $H^1(W_F, \wh{S})$.
\end{lemma}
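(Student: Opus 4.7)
The plan is to combine three ingredients: Shapiro's lemma, local Langlands for tori, and an identification of the pro-$p$ part of $S(F)$.

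First, from the decomposition $S = \prod_{i=1}^m \mrm{U}_1(E_i/F_i)$ and Shapiro's lemma (precisely as in the diagram of \S\ref{LLC tori}), I would identify
\[
H^1(W_F,\wh{S}) \cong \prod_{i=1}^m H^1(W_F, \wh{\mrm{U}}_1(E_i/F_i)) \cong \prod_{i=1}^m \Hom(\mrm{U}_1(E_i/F_i),\CC^\times) = \Hom(S(F),\CC^\times),
\]
where the middle bijection is local Langlands for tori. Under this identification, a class in $H^1(W_F,\wh{S})$ is tamely ramified (i.e.\ its restriction to $P_F$ is trivial) if and only if the corresponding character of $S(F)$ is trivial on the pro-$p$ part $S(F)^{0+}$: this is the standard compatibility of LLC for tori with the Artin map, which identifies wild inertia with the pro-$p$ part of the maximal compact of each $E_i^\times$, and hence of $\mrm{U}_1(E_i/F_i)$ via the surjection $x\mapsto x/\ol{x}$.

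Second, I would compute $S(F)/S(F)^{0+}$. Since $E_i/F_i$ is a totally ramified tame quadratic extension and the residue characteristic is odd, the compact torus $\mrm{U}_1(E_i/F_i)$ fits in a short exact sequence
\[
1\longrightarrow \mrm{U}_1(E_i/F_i)^{0+} \longrightarrow \mrm{U}_1(E_i/F_i) \longrightarrow \mu_2(k_{F_i}) \longrightarrow 1,
\]
where the kernel is pro-$p$ and the quotient, being the $k_{F_i}$-points of the reductive quotient of the Moy-Prasad filtration, equals $\mu_2(k_{F_i}) = \mu_2(F_i)$ (using $p>2$). Taking the product over $i$ yields $S(F)/S(F)^{0+} \cong \prod_i \mu_2(F_i) = C_\lambda$.

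Combining these two observations, tamely ramified classes in $H^1(W_F,\wh{S})$ correspond bijectively to characters of $C_\lambda$, i.e.\ to elements of $C_\lambda^\vee$. The only subtle point is the first step, the claim that ``tame on the Galois side'' matches ``trivial on the pro-$p$ part on the $S(F)$ side'' under LLC for tori; but this is transparent from the functoriality of the Artin map used in the explicit construction of \cite{Lang97} applied to each factor $E_i^\times$, so no serious calculation is required.
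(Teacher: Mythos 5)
Your argument is correct and is essentially the same as the paper's (which itself just refers back to the opening of \S\ref{section compute admissible embedding}): Shapiro's lemma to decompose $H^1(W_F,\wh S)$ factor by factor, local Langlands for tori to pass to characters of $S(F)$, and then the observation that tamely ramified classes correspond to characters trivial on the pro-$p$ part, with $S(F)/S(F)^{0+}\cong\prod_i\mu_2(F_i)=C_\lambda$. The only cosmetic issue is the intermediate term $H^1(W_F,\wh{\mrm U}_1(E_i/F_i))$, which should either be $H^1(W_{F_i},\wh{\mrm U}_1(E_i/F_i))$ or have $\wh{\mrm U}_1(E_i/F_i)$ read as the dual of the restriction-of-scalars torus $\Ind_{W_{F_i}}^{W_F}$-module, exactly as in the diagram of \S\ref{LLC tori} you cite.
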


\begin{proof}
This follows from class field theory for $\wh{S}$, and was already proved in the beginning of the discussion of \S \ref{section compute admissible embedding}.
\end{proof}

Using the lemma and the embedding
\[
H^1(W_F, \wh{S}) = \bigoplus_i H^1(W_{F_i}, \wh{U}_1(E_i/F_i)) \hookrightarrow \bigoplus_i H^1(W_{E_i}, \CC^{\times})
\]
we can view characters on $S/S_{1/2}$, inflated characters of $S$, as cohomology classes in $\bigoplus_i H^1(W_{E_i}, \CC^{\times})$. Moreover, the condition of triviality on $S_{1/2}$ forces its image cohomology class to be unramified. Indeed, a character of $S/S_{1/2}$ is a character of $\prod_i U_1(E_i/F_i)$ that vanishes on elements which are $1 \mod \varpi_E$ in each component, hence pull back to unramified characters of $E_i^{\times}$ via the map $E_i^{\times} \xrightarrow{x \mapsto x \ol{x}^{-1}} U_1(E_i/F_i)$.

Similarly, the $L$-embedding was determined by a tamely ramified class in $H^1(W_F, \wh{S}) $, under which group the admissible splittings $w \mapsto r_\lambda(w)$ formed a torsor, and this tamely ramified class again restricts to an unramified class in $\bigoplus_i H^1(W_{E_i}, \CC^{\times})$. The Langlands parameter attached to $\rho_{\lambda, \psi}$ is then explicitly given by 
\begin{align*}
W_F &  \rightarrow {}^L \wh{\mrm{U}}_n \\
w & \mapsto \varphi_{\psi}(w) \varphi_{\epsilon_\lambda}(w) r_\lambda(w) n(\omega_\lambda(w)) \times w .
\end{align*}

We want to know when $\rho_{\lambda, \psi}$ and $\rho_{\lambda', \psi'}$ have the same Langlands parameter. Although local Langlands parameters are considered modulo conjugacy, by demanding that wild inertial map in a fixed way into a fixed maximal torus $\wh{T} \subset \wh{G}$, with image having centralizer $\wh{T}$ by definition \ref{epipelagic parameter}, we have rigidified the parameters up to $\wh{T}$-conjugacy. Therefore, $\rho_{\lambda, \psi}$ and $\rho_{\lambda', \psi'}$ have equivalent Langlands parameters if and only if
 \[
\varphi_{\psi}(w) \varphi_{\epsilon_\lambda}(w) r_\lambda(w) n(\omega_\lambda(w)) \quad \text{ is $\wh{T}$-conjugate to } \quad \varphi_{\psi'}(w) \varphi_{\lambda'}(w) r_{\lambda'}(w) n(\omega_{\lambda'}(w)).
\]

To digest this condition, we will translate all of the data above back to $C_{\lambda}^{\vee}$. For $\epsilon_\lambda, \psi, \epsilon_\lambda', \psi'$, it is obvious how to view them as characters on $C_{\lambda} = S/S_{1/2}$, i.e. as elements of $C_{\lambda}^{\vee}$, and we denote their images by $[\epsilon_{\lambda}], [\psi]$ etc. to contrast with their appearance above as elements of $\bigoplus_i H^1(W_{E_i}, \CC^{\times})$.

 We can view the difference between two admissible embeddings, given by $w \mapsto r_{\lambda}(w) r_{\lambda'}(w)^{-1}$, as defining a tamely ramified cohomology class $[r_{\lambda}-r_{\lambda'}] \in \bigoplus_i H^1(W_{F}, \wh{S}_i)$, which by Lemma \ref{tame cohomology equals character group} can be identified with an element of $C_{\lambda}^{\vee}$. (What is being used here is that for stably conjugate $\lambda$, the associated tamely ramified tori are abstractly isomorphic, i.e. we have a canonical isomorphism $X^*(S_i) \cong X^*(S')$ as Galois modules, since these are determined by the partition $d_1+ \ldots + d_m =  n$.) The $\wh{T}$-conjugacy changes the cocycle $w \mapsto  r_\lambda(w) n(\omega_\lambda(w)) \times w$ by a coboundary. Thus, the $\wh{T}$-conjugation ambiguity is entirely encoded by  $[r_{\lambda}-r_{\lambda'}] \in  C_{\lambda}^{\vee}$. This discussion proves: 
 
 \begin{lemma} \label{lemmamostgeneral}
 The representations $\rho_{\lambda, \psi}$ and $\rho_{\lambda', \psi'}$ have the same Langlands parameter if and only if 
 \begin{equation}\label{equal Langlands parameters}
[\epsilon_\lambda] - [\epsilon_{\lambda'} ]  + [r_{\lambda}-r_{\lambda'}] = [\psi'] - [\psi] \in C_{\lambda}^{\vee}.
\end{equation}
\end{lemma}

We now substitute the expressions for the toral invariant and admissible embedding that we have computed. By Corollary \ref{cor: toral invariant 2}, we have
\begin{equation}\label{diff epsilons}
[\epsilon_\lambda-\epsilon_{\lambda'}]  (c_i)=  \leg{\eta_i}{q^{d_i}}^{n} / \leg{\eta_i'}{q^{d_i}}^{n}.
\end{equation}
Note that this is trivial if $n$ is even, and is $\leg{\eta_i}{q^{d_i}} / \leg{\eta_i'}{q^{d_i}}$ if $n$ is odd. 

We view $[r_{\lambda}-r_{\lambda'}]$ as a character on $C_{\lambda}^{\vee}$,  by sending $c_i$ \eqref{c_i} to the ratio of the expressions \eqref{r_p computed} for the two admissible embeddings:
\begin{equation}\label{diff rps}
[r_\lambda-r_{\lambda'}](c_i) = \prod_{i \neq j}   \prod_{t=1}^{(d_i,d_j)} \leg{\delta_i - \delta_j^{q^{t-1}}}{q^{[d_i,d_j]}} /  \leg{\delta_i' - (\delta_j')^{q^{t-1}}}
{q^{[d_i,d_j]}}.
\end{equation}

Plugging these equations into \eqref{equal Langlands parameters}, we obtain something which is ``concrete'' enough but quite a mess, since both $[\epsilon_\lambda]$ and $[r_{\lambda}]$ were described by extremely complicated formulas. We next proceed, in \S \ref{Orbit parametrization}, to give a somewhat cleaner characterization by relating $[\epsilon_\lambda] + [r_\lambda]$ to the position of the orbit of $\lambda$ within its stable orbit.

\subsection{Parametrization of orbits and L-packets	}\label{Orbit parametrization}

As discussed in \S \ref{moy prasad filtration}, the action of $\msf{G}_x$ on $\msf{V}_{x}$ is 
$\SO_n(k)$ acting on $\Sym^2(\mrm{Std})$. The \emph{stable orbit} of $\lambda \in  \msf{V}_{x}$ is defined to be the intersection of $\msf{V}_{x}(k)$ with the orbit of $\lambda$ under $\SO_{n}(\ol{k})$ in $\msf{V}_{x} \otimes \ol{k}$. Since $D_{\lambda} :=\prod_i \Res_{k_i/k} \mu_2 $ is the stabilizer of $\lambda$, the $k$-rational orbits of $\lambda$ within the stable orbit are a torsor for $\ker \left( H^1(k, D_{\lambda} ) \rightarrow H^1(k, \SO_n)\right)$, which is all of $\ker H^1(k, D_{\lambda})$ by Lang's theorem. 

We will explain a way to choose a basepoint for this torsor, which comes from a ``Kostant-Weierstrass section''. Using this, we can identify the position of the rational orbit of $\lambda$ in its stable orbit with an element of  $H^1(k, D_{\lambda} )$. There is a perfect pairing 
\[
\prod_i \Res_{k_i/k} \mu_2  \times \prod_i \Res_{k_i/k} \mu_2  \rightarrow \mu_2
\]
inducing (by Tate duality for finite fields) 
\[
H^1(k, \prod_i \Res_{k_i/k} \mu_2 ) \cong H^0(k, \prod_i \Res_{k_i/k} \mu_2 )^{\vee} \cong C_{\lambda}^{\vee}.
\]
Thus, the choice of a basepoint allows us to parametrize the rational orbit of $\lambda$ within its stable orbit by an element of $C_{\lambda}^{\vee}$. \\

Clearly, a basepoint for each stable orbit can be described by giving a section of 
\[
\msf{V}_x // \SO_n  \rightarrow \msf{V}_x.
\]
According to \cite[Theorem 28]{RLYG} such a section always exists for the representations under consideration, since they arise from the Vinberg-Levy theory of graded Lie algebras by \cite{RY14} Theorem 4.1\footnote{Paul Levy has informed us that in our setting the main idea for the existence of a section is already contained in early work of Kostant and Rallis \cite{KR71}, and that the relevant case of \cite[Theorem 28]{RLYG} is really due to Panyushev.}. We will pick a particular such section, and call it a \emph{Kostant-Weierstrass section.}\footnote{In \cite{RLYG} any such section is simply called a \emph{Kostant section}, but this may cause confusion with the special sections with this name in classical invariant theory.} Consider the algebraic group $\GL_n$ over $k$, and suppose we have an involution of $\GL_n$ with fixed subgroup $\mrm{O}_{n}$. This induces a decomposition $\mf{gl}_{n} = \mf{o}_{n} \oplus \mf{gl}(1)$ where $\mf{gl}(1)$ is the space of self-adjoint matrices. (In our case, $\mf{gl}(1) \cong \msf{V}_x$. The notation here follows that of the Vinberg-Levy theory in \cite{RY14}.) The quotient $\mf{gl}(1)//\SO_n$ is regular, and in fact is the affine space parametrizing characteristic polynomials. In this case we can write down an explicit section $\mf{gl}(1)//\SO_{n} \rightarrow \mf{gl}(1)$, in the form of a subspace $\mf{c} \subset \mf{gl}(1)$ which projects isomorphically down to $\mf{gl}(1)//\SO_{n}$:
\begin{equation}\label{Kostant section}
\mf{c} := \left\{\begin{pmatrix} 
a_1  & \ldots & a_{n-2} &  a_{n-1} & a_{n} \\ 
1   & & & &  a_{n-1} \\
 &1   & & &  a_{n-2}  \\
& & \ddots  & & \vdots \\
&  & & 1 & a_1
\end{pmatrix}  \right\}
\end{equation}
Here all the inner entries are $0$, to make the construction work well in all characteristics $>2$.

\begin{prop}\label{kostant identity} Consider an $n$-dimensional quadratic space $V$ over $k$ of characteristic  $>2$, with the bilinear form
\[
\langle (x_1, \ldots, x_n), (y_1, \ldots, y_n) \rangle = x_1y_n + x_2y_{n-1} + \ldots + x_n y_1.
\]
Let $\lambda$ be a self-adjoint operator on $V$, with characteristic polynomial $p(T)$. Suppose $p(T) =\prod_i p_i(T)$ with $p_i$ irreducible, and let $k_i = k[T]/p_i(T)$. We may write $\langle x, y \rangle = \sum \Tr_{k_i/k}(\eta_i xy )$, for some $\eta_i \in k_i$. Let $\delta_i$ be a root of $p_i(T)$. Then the function 
\begin{equation}\label{eq: rel pos cocycle}
c_i \mapsto  (-1)^{d_i-1} \leg{(-1)^{\lfloor d_i/2\rfloor}}{q}  \leg{\eta_i}{q^{d_i}} \prod_{j \neq i}\prod_{t=1}^{(d_i,d_j)} \leg{\delta_i - \delta_j^{q^{t-1}}}{q^{[d_i,d_j]}} 
\end{equation}
viewed as an element of $C_{\lambda}^{\vee} $ is exactly the position of $\lambda$ relative to the Konstant section \eqref{Kostant section}.
\end{prop}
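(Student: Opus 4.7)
The plan is to reverse-engineer the relative-position cocycle from the explicit Kostant-Weierstrass basepoint, in three conceptual steps. First, I recall the standard Galois-descent parametrization: the rational orbits within the stable orbit of $\lambda$ form a torsor under $H^1(k, D_\lambda) \cong \prod_i H^1(k_i, \mu_2) \cong \prod_i k_i^\times/(k_i^\times)^2$ (via Shapiro and Kummer). The class of $\lambda$ relative to a basepoint $\lambda^{KW}$ is represented by $(\eta_i/\eta_i^{KW})_i$: identifying each eigenspace $V_i$ with the free rank-one $k_i$-module, the restriction of the bilinear form becomes $\Tr_{k_i/k}(\eta_i xy)$, and an $\ol{k}$-linear isometry $V_i^{KW}\otimes\ol{k} \to V_i\otimes\ol{k}$ is scalar multiplication by a square root of $\eta_i^{KW}/\eta_i$, yielding exactly the Kummer cocycle of that ratio. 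By Tate duality for finite fields the torsor is identified with $C_\lambda^\vee$, and the associated character evaluates on the generator $c_i$ to the Legendre symbol $\leg{\eta_i/\eta_i^{KW}}{q^{d_i}}$.

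The key technical step is the computation of $\eta_i^{KW}$. A direct calculation with the matrix $A$ from \eqref{Kostant section} shows that $e_1$ is a cyclic vector and $e_{k+1} = Q_k(A)\,e_1$, where the polynomials $Q_k(T) = T^k - a_1 T^{k-1} - \cdots - a_k$ satisfy the recursion $Q_0 = 1$, $Q_k = T Q_{k-1} - a_k$. Hence the $k[T]$-module isomorphism $V\cong k[T]/p(T)$ sending $e_1\mapsto 1$ identifies $\{e_i\}$ with $\{Q_{i-1}(T)\}$. Under this isomorphism the anti-diagonal form becomes a trace form $(f,g)\mapsto \Tr_{k[T]/p(T)/k}(\eta^{KW} fg)$, and I claim $\eta^{KW} = 1/p'(T)$. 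To verify, note that $\langle 1, Q_k(T)\rangle$ equals $1$ when $k = n-1$ and vanishes otherwise (since the form is anti-diagonal in the $e$-basis), and check the matching identity for $\Tr(Q_k(T)/p'(T))$ via the classical residue formula $\sum_\alpha \alpha^r/p'(\alpha) = 1$ if $r = n-1$ and $0$ for $0 \leq r < n-1$ (obtained by summing residues of $T^r/p(T)$ over $\mathbf{P}^1$, where the only nonzero contribution at infinity occurs when $r = n-1$). The CRT decomposition then yields $\eta_i^{KW} = 1/p'(\delta_i)$, which modulo $(k_i^\times)^2$ equals $p'(\delta_i)$.

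It then remains to simplify $\leg{p'(\delta_i)}{q^{d_i}}$ via the factorization $p'(\delta_i) = p_i'(\delta_i)\prod_{j\neq i} p_j(\delta_i)$. For the factor $p_i'(\delta_i)$, the identity $\leg{\Nm_{k_{d_i}/k}(x)}{q} = \leg{x}{q^{d_i}}$ together with the computation $\Nm_{k_{d_i}/k}(p_i'(\delta_i)) = (-1)^{\floor{d_i/2}}\disc(p_i)$, combined with the fact that $\disc(p_i)$ is a square in $k$ iff $d_i$ is odd (since Frobenius acts on the $d_i$ roots as a single $d_i$-cycle, which is even exactly when $d_i$ is odd), yields $\leg{p_i'(\delta_i)}{q^{d_i}} = (-1)^{d_i - 1}\leg{(-1)^{\floor{d_i/2}}}{q}$. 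For the cross factors $p_j(\delta_i)$, the product $\prod_{t=0}^{d_j-1}(\delta_i - \delta_j^{q^t})$ decomposes into $(d_i, d_j)$ orbits of size $d_j/(d_i, d_j)$ under $\langle\mathrm{Frob}^{d_i}\rangle$, giving $p_j(\delta_i) = \prod_{t=0}^{(d_i,d_j) - 1}\Nm_{k_{[d_i,d_j]}/k_{d_i}}(\delta_i - \delta_j^{q^t})$; applying the same norm-symbol identity then gives exactly the double product in \eqref{eq: rel pos cocycle}.

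The main obstacle is the identification $\eta^{KW} = 1/p'(T)$ in the second step; the cyclic-vector description of $V$ in terms of the polynomials $Q_k$ is crucial, and the verification then reduces to the classical residue identity. Everything downstream is careful bookkeeping with Legendre symbols, norm-index identities, and Galois-orbit decompositions.
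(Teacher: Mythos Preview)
Your argument is correct and reaches the same conclusion as the paper, but the packaging is genuinely different. The paper does not work with the trace form directly; instead it introduces an auxiliary linear functional $\omega_{n-1}\colon k[T]/p(T)\to k$ (picking out the coefficient of $T^{n-1}$) and writes $\langle u,v\rangle=\omega_{n-1}(\alpha\,uv)$ for a unique $\alpha$. With this normalization the Kostant section has $\alpha=1$ on the nose, so no residue identity is needed at that step; the cocycle in \eqref{eq: rel pos cocycle} is then reduced to $c_i\mapsto\leg{\overline{\alpha}}{q^{d_i}}$ by appealing to two appendix lemmas computing the discriminants of the pairings $\Tr(\eta\,xy)$ and $\omega_{d-1}(\alpha\,xy)$ separately.

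Your route and the paper's are linked by Euler's formula $\omega_{n-1}(f)=\Tr\!\bigl(f/p'(T)\bigr)$, which translates the paper's $\alpha=1$ into your $\eta^{KW}=1/p'(T)$. What you gain is that you never introduce the $\omega$-functional at all: you stay inside the trace-form world, invoke the residue identity once to pin down $\eta^{KW}$, and then handle the ``self'' factor $p_i'(\delta_i)$ via the classical fact that $\disc(p_i)$ is a square iff the $d_i$-cycle is even. The paper instead buries the same arithmetic inside a direct Laplace-expansion computation of the discriminant of the $\omega$-pairing. Your treatment of the cross factors $p_j(\delta_i)$ via norm/orbit decomposition is essentially identical to the paper's.

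One point you make explicit that the paper leaves implicit: you actually justify why the relative-position class is the Kummer class of $\eta_i/\eta_i^{KW}$, by writing down the $\ol{k}$-isometry and its Galois cocycle. The paper simply asserts that it suffices to show $\alpha=1$ on the Kostant section, so your first paragraph fills a small expository gap.
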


\begin{proof}
By picking a vector $v \in V$ which is cyclic, i.e. such that $\{T^i v\}$ spans $V$, we may identify $V \cong k[x]/p(x)$. Then any element of $V$ can be represented (uniquely) by a polynomial 
\[
v(x)= v_{n-1} x^{n-1} + \ldots + v_0.
\] 
By Lemma \ref{disc weird pairing}, the pairing $\langle \cdot, \cdot \rangle$ on $V$ can be written as 
\[
\langle u,v \rangle = \omega_{n-1}(\alpha uv)
\]
for a unique $\alpha \in k[x]/p(x)$, where $\omega_{n-1}(u)$ is the coefficient of $x^{n-1}$ in the unique expression for $u$ as a polynomial of degree at most $n-1$.

Let $h_i(x) = p(x)/p_i(x)$. The decomposition $V  \cong \bigoplus  V_i$ can be realized with $V_i = \{ v(x) \colon v \leq d_i-1\}$ via the map $V_i \rightarrow V$ given by $v(x) \mapsto v(x) h_i(x)$. The restriction $\langle \cdot, \cdot \rangle|_{V_i}$ is then given by 
\[
\langle u,v \rangle_{V_i}  = \langle uh_i, vh_i \rangle = \omega_{n-1}(\alpha uvh_i^2).
\]
We aim to rewrite this in terms of the pairing of Lemma \ref{disc weird pairing} for $V_i \cong k[x]/p_i(x)$. If $\ol{\alpha uvh_i}$ is the representative for $\alpha uv h_i$ of deg $\leq d_i-1$ under the ``reduction mod $p_i$'' map $k[x]/p(x) \rightarrow k[x]/p_i(x)$, then (since $h_i$ is monic of degree $n-d_i$) we have
\[
\omega_{n-1}(\alpha uvh_i^2) = \omega_{d_i-1}(\ol{\alpha uv h_i}).
\]
By Lemma \ref{disc weird pairing} if $D_i$ denotes the discriminant of $\langle \cdot, \cdot \rangle|_{V_i}$ then we have $\leg{D_i}{q} = \leg{(-1)^{\lfloor d_i/2 \rfloor}}{q} \leg{\ol{\alpha h_i}}{q^{d_i}}$. Combining this with Lemma \ref{disc trace pairing}, we then have
\begin{equation}\label{eq: discriminants 0}
(-1)^{d_i-1} \leg{\eta_i}{q^{d_i}} =   \leg{D_i}{q} =  \leg{ (-1)^{\lfloor d_i/2 \rfloor}}{q} \leg{\ol{\alpha h_i}}{q^{d_i}}.
\end{equation}

We will use this equation to re-express the right hand side expression above with the right hand side of \eqref{eq: rel pos cocycle}. By Lemma \ref{Legendre norm} and the identity $\prod_{t=1}^{(d_i,d_j)} \Nm_{k_{[d_i,d_j]}/k_{d_i}}(\delta_i - \delta_j^{q^{t-1}})  = \prod_{t=1}^{d_j} (\delta_i - \delta_j^{q^{t-1}})$, we have 
\begin{equation}\label{eq: discriminants 1}
\prod_{t=1}^{(d_i,d_j)} \leg{\delta_i - \delta_j^{q^{t-1}}}{q^{[d_i,d_j]}} = \prod_{t=1}^{(d_i,d_j)}\leg{\Nm_{k_{[d_i,d_j]}/k_{d_i}}(\delta_i - \delta_j^{q^{t-1}})}{q^{d_i}}  =  \prod_{t=1}^{d_j} \leg{\delta_i - \delta_j^{q^{t-1}}}{q^{d_i}}
\end{equation}
On the other hand, from the definition of $h_i$ we compute directly that 
\begin{equation}\label{eq: discriminants 2}
\leg{\ol{\alpha h_i}}{q^{d_i}} = \leg{\ol{\alpha}}{q^{d_i}}  \prod_{j \neq i} \leg{p_j(\delta_i)}{q^{d_i}} = \leg{\ol{\alpha}}{q^{d_i}} \prod_{j \neq i} \prod_{t=1}^{d_j} \leg{\delta_i-\delta_j^{q^{t-1}}}{q^{d_i}}. 
\end{equation}

Substituting \eqref{eq: discriminants 0}, \eqref{eq: discriminants 1}, and \eqref{eq: discriminants 2} into the right hand side of \eqref{eq: rel pos cocycle}, it simplifies to $c_i \mapsto \leg{\ol{\alpha}}{q^{d_i}}$. We want to show that this cocycle represents the cohomology class measuring the relative position of $\lambda$ with respect to the Kostant section. For this it suffices to show that each member of the Konstant section has $\alpha=1$. To prove this, let 
$\lambda$ be a member of the Kostant section, written in terms of a basis $e_1, \ldots, e_n$ as 
\[
\lambda = \begin{pmatrix} 
 a_1  & \ldots & a_{n-2} &  a_{n-1} & a_{n} \\ 
1   & & & &  a_{n-1} \\
 &1   & & &  a_{n-2}  \\
& & \ddots  & & \vdots \\
&  & & 1 & a_1
\end{pmatrix} .
\]
Then it is easy to check that $e_1$ is a cyclic vector for $\lambda$ acting on $V$, so that we have an identification $V \cong k[x]/p(x)$ via $x^i = \lambda^i e_1$. We easily compute that $x^i = e_i + \text{(lower index terms)}$, so that
\[
\langle 1, x^i \rangle  = \delta_{i,n-1}.
\]
 By definition, $\alpha$ is such that 
\[
 \langle 1, x^i \rangle = \omega_{n-1}(\alpha x^i),
\]
so that $\omega_{n-1}(\alpha x^i) =  \delta_{i,n-1}$. This identities are  satisfied by $\alpha=1$, so by non-degeneracy $\alpha=1$ is the unique solution. 
\end{proof}

Let $\lambda_{\mrm{KW}}$ be the Kostant-Weierstrass section of $\lambda$ corresponding to \eqref{Kostant section}. As explained at the beginning of this subsection, we may view $[\lambda-\lambda_{\mrm{KW}}] \in C_{\lambda}^{\vee}$. For another $\lambda'$ in the stable orbit of $\lambda$, we have $[\lambda-\lambda'] = [\lambda- \lambda_{\mrm{KW}}] - [\lambda' - \lambda_{\mrm{KW}}] \in C_{\lambda}^{\vee}$.
 
\emph{We now restrict our attention to $n$ odd.} Combining Proposition \ref{kostant identity} with \eqref{diff epsilons} and \eqref{diff rps}, we see that in the notation of \eqref{equal Langlands parameters} we have
\[
[\epsilon_{\lambda}-\epsilon_{\lambda'}] + [r_{\lambda}-r_{\lambda'}]    = [\lambda - \lambda'] \in C_{\lambda}^{\vee}.
\]
(We have used here that the extra signs $(-1)^{d_i-1} \left( \leg{(-1)^{\lfloor d_i/2 \rfloor} }{q} \right)$ from Proposition \ref{kostant identity} cancel out when taking the ratio of the cocycles corresponding to two stably conjugate functionals.) Feeding this into \eqref{equal Langlands parameters}, we conclude: 

\begin{thm}\label{unitary epi param}
Consider $U_n$ with $n$ odd. The epipelagic representation $\rho_{\lambda, \psi}$ and $\rho_{\lambda', \psi'}$ lie in the same $L$-packet if and only if $[\lambda-\lambda']  = [\psi']- [\psi] \in C_{\lambda}^{\vee}$.
\end{thm}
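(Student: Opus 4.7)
The plan is to combine the abstract characterization \eqref{equal Langlands parameters} with the explicit formulas for $[\epsilon_\lambda]$ and $[r_\lambda]$ computed in the preceding sections, exploiting the simplifications that occur for stably conjugate $\lambda, \lambda'$ when $n$ is odd. The key identity to establish is
\[
[\epsilon_\lambda - \epsilon_{\lambda'}] + [r_\lambda - r_{\lambda'}] = [\lambda - \lambda'] \in C_\lambda^\vee,
\]
after which the theorem follows immediately by substitution into \eqref{equal Langlands parameters} and rearrangement. The hypothesis that the parameters are epipelagic with the same wild-inertial behavior automatically forces $\lambda$ and $\lambda'$ to be stably conjugate, so the right-hand side makes sense in a common group $C_\lambda^\vee \cong C_{\lambda'}^\vee$.

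First I would observe that stably conjugate $\lambda, \lambda'$ share the same characteristic polynomial, hence the same partition $(d_i)$ and the same roots $\delta_i$. Consequently the formula \eqref{diff rps} for $[r_\lambda - r_{\lambda'}]$ collapses to the trivial cocycle, since every factor $\leg{\delta_i - \delta_j^{q^{t-1}}}{q^{[d_i,d_j]}}$ matches its primed counterpart. Next, evaluating $[\epsilon_\lambda - \epsilon_{\lambda'}]$ via \eqref{diff epsilons} and using that $n$ is odd (so that $\leg{x}{q^{d_i}}^n = \leg{x}{q^{d_i}}$), the ratio reduces on each generator $c_i$ to $\leg{\eta_i/\eta_i'}{q^{d_i}}$. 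The remaining factors in Corollary \ref{cor: toral invariant 2} — $\leg{-1}{q}^{d_i(n-d_i)}$, $\bigl(\prod_j (-1)^{d_j - 1}\bigr)^{d_i}$, and $\leg{D}{q}^{d_i}$ — depend only on the partition and on the discriminant $D$ of $\ol{V}$, both invariants of the stable orbit, so they cancel in the ratio.

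Finally I would compute $[\lambda - \lambda']$ using the identity $[\lambda - \lambda'] = [\lambda - \lambda_{\mrm{KW}}] - [\lambda' - \lambda_{\mrm{KW}}]$ together with Proposition \ref{kostant identity}. Applied to both $\lambda$ and $\lambda'$, the extra signs $(-1)^{d_i - 1}\leg{(-1)^{\lfloor d_i/2\rfloor}}{q}$ and the products $\prod_{j \neq i}\prod_{t=1}^{(d_i,d_j)}\leg{\delta_i - \delta_j^{q^{t-1}}}{q^{[d_i,d_j]}}$ depend only on stable-orbit data and hence cancel in the ratio, leaving exactly $\leg{\eta_i/\eta_i'}{q^{d_i}}$ on each $c_i$. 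This matches the expression computed for $[\epsilon_\lambda - \epsilon_{\lambda'}] + [r_\lambda - r_{\lambda'}]$ term by term, establishing the desired identity.

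The main obstacle is the careful bookkeeping needed to verify that after all the cancellations, the three computed cocycles combine exactly to the formula of Proposition \ref{kostant identity}. The substantive insight, already absorbed into that proposition, is that its right-hand side was designed precisely to package the toral invariant and $L$-embedding contributions into a single invariant, namely the relative position of rational orbits; the present theorem simply confirms this packaging identity. The parity of $n$ enters the proof only through $\leg{\eta_i}{q^{d_i}}^n$: for $n$ odd this term is nontrivial and produces the clean statement above, while for $n$ even it disappears, which is consistent with the authors' earlier remark that the even case has a qualitatively different structure.
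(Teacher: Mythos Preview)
Your proposal is correct and follows essentially the same approach as the paper. The paper's proof is terser — it simply says to combine Proposition~\ref{kostant identity} with \eqref{diff epsilons} and \eqref{diff rps}, noting that the extra signs $(-1)^{d_i-1}\leg{(-1)^{\lfloor d_i/2\rfloor}}{q}$ cancel in the ratio — but your more explicit observation that $[r_\lambda - r_{\lambda'}]$ is itself trivial (since the $\delta_i$ depend only on the characteristic polynomial) and that both sides reduce to $\leg{\eta_i/\eta_i'}{q^{d_i}}$ on $c_i$ is exactly the content of that combination.
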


For special unitary epipelagic representations, the epipelagic representations coming from the point $x$ are just restrictions from $\mrm{U}_n$, according to Lemma \ref{SU_n restriction}. Let us point out how the discussion changes for $\mrm{SU}_n$. 
\begin{itemize}
\item The centralizer $C_{\lambda} \cap {\SU_n}$ is cut out in $C_{\lambda}$ by the equation $\det = 1$. Therefore it is a subgroup of index $2$ unless all $d_i$ are even (which of course cannot happen if $\sum d_i = n$ is odd), and in the latter case it is all of $C_{\lambda}$.  
\item The representations $\rho_{\lambda, \psi}$ and $\rho_{\lambda', \psi'}$ of $\mrm{U}_n$ collapse if and only if $\lambda$ is rationally conjugate to $\lambda'$, and $\psi|_{C_{\lambda} \cap {\SU_n}} = \psi'|_{C_{\lambda} \cap {\SU_n}}$ (by Lemma \ref{lem: isom iff rat conj}).
\item The Langlands parameter for $\rho_{\lambda, \psi}|_{\SU_n}$ is then just the quotient of the Langlands parameter for $\rho_{\lambda, \psi}$ by the center of ${}^L \mrm{U}_n$. This exactly collapses two Langlands parameters which differ by the diagonal matrix $\mrm{diag}(-1, \ldots, -1)$.
\end{itemize}
Let $z \in C_{\lambda}^{\vee}$ be the character defined by $z(c_i) = -1$ for all $i$, which corresponds to the aforementioned diagonal matrix.

\begin{cor}\label{special epi param} Consider $\SU_n$ with $n$ odd. The epipelagic representation $\rho_{\lambda, \psi}|_{\SU_n}$ and $\rho_{\lambda, \psi}|_{\SU_n}$ lie in the same $L$-packet if and only if 
\[
[\lambda-\lambda'] =  ([\psi']-[\psi])  \in (C_{\lambda} \cap \SU_n)^{\vee} \quad \text{or} \quad 
[\lambda-\lambda'] =  z + ([\psi']-[\psi])  \in (C_{\lambda} \cap \SU_n)^{\vee} .
\] 
\end{cor}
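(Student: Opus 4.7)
The plan is to derive the corollary from Theorem \ref{unitary epi param} (the $\mrm{U}_n$ case) by applying the three bullet points preceding the statement, which together control (i) when two $\mrm{U}_n$-representations of our form have equivalent restrictions to $\SU_n$ and (ii) how $\SU_n$-parameters are obtained from $\mrm{U}_n$-parameters.

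First, I will invoke the second bullet point (a restatement of Lemma \ref{SU_n restriction}) to reduce the equivalence of $\SU_n$-representations to equivalence of data $(\lambda,\psi)$: for fixed rational orbit of $\lambda$, the restriction $\rho_{\lambda,\psi}|_{\SU_n}$ depends on $\psi$ only through its class $[\psi]$ in $(C_\lambda \cap \SU_n)^\vee$, since modifying $\psi$ by any character of $C_\lambda$ trivial on $C_\lambda \cap \SU_n$ leaves the restriction unchanged. Second, I will use the third bullet point: the $\SU_n$ Langlands parameter is obtained from the $\mrm{U}_n$ Langlands parameter by post-composing with ${}^L\mrm{U}_n \twoheadrightarrow {}^L\SU_n$, whose kernel, intersected with our rigidified torus $\hat T$, identifies exactly those of our $\mrm{U}_n$-parameters differing by the central element $\mrm{diag}(-1,\ldots,-1)$. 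Under the identification of Lemma \ref{tame cohomology equals character group}, this central twist corresponds precisely to the character $z \in C_\lambda^\vee$ with $z(c_i)=-1$ for all $i$.

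Assembling these reductions with Theorem \ref{unitary epi param}, I conclude that $\rho_{\lambda,\psi}|_{\SU_n}$ and $\rho_{\lambda',\psi'}|_{\SU_n}$ lie in a common $\SU_n$ L-packet iff one can choose representatives $\tilde\psi, \tilde\psi'$ of $[\psi],[\psi']$ in $C_\lambda^\vee$ for which
\[
[\lambda-\lambda'] - ([\tilde\psi']-[\tilde\psi]) \in \{0,z\} \subset C_\lambda^\vee.
\]
Reducing modulo $\ker(C_\lambda^\vee \twoheadrightarrow (C_\lambda \cap \SU_n)^\vee)$ absorbs the freedom in $\tilde\psi, \tilde\psi'$ and yields exactly the disjunction in the statement of the corollary.

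The main obstacle is to rigorously justify the third bullet point, namely that among our explicitly constructed $\mrm{U}_n$-parameters the central quotient ${}^L\mrm{U}_n \twoheadrightarrow {}^L\SU_n$ identifies exactly the pairs differing by the twist $z$. Any central cocycle $W_F \to Z(\hat{\mrm{U}}_n) = \CC^\times$ relating two of our parameters must be tamely ramified, since by condition (3) of Definition \ref{epipelagic parameter} the wild inertia image is already fixed; moreover it must land in the finite subgroup $Z(\hat{\mrm{U}}_n) \cap \hat T = \{\pm I\}$, since both parameters are rigidified into the same torus $\hat T$. The identification of $-I \in \hat T$ with the character $z \in C_\lambda^\vee$ then follows by unwinding the diagonal embedding $\Delta_i \co \hat S_i \hookrightarrow \hat T$ used throughout \S \ref{section compute admissible embedding}: the element $-I$ acts by $-1$ on every weight space, hence corresponds under local class field theory and Shapiro's lemma to the character $(c_i \mapsto -1)_i = z$.
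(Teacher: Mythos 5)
Your proposal takes essentially the same approach as the paper: derive the corollary from Theorem~\ref{unitary epi param} via the three bullet points preceding the statement (the identification of $C_\lambda\cap\SU_n$, the collapsing of restrictions controlled by $(C_\lambda\cap\SU_n)^\vee$, and the collapsing of parameters under ${}^L\mrm{U}_n\to{}^L\SU_n$ controlled by the central twist $z$). The structure of the argument and the final assembly match the paper.

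One small imprecision worth flagging: you write that the central cocycle ``must land in the finite subgroup $Z(\hat{\mrm{U}}_n)\cap\hat T=\{\pm I\}$,'' but this intersection is all of $Z(\hat{\mrm{U}}_n)\cong\CC^\times$ since the center already sits inside $\hat T$. The correct justification is at the level of cohomology classes, not values: the difference cocycle is tamely ramified (because the wild-inertia restriction of our parameters is determined by the stable orbit of $\lambda$, not literally by condition~(3) of Definition~\ref{epipelagic parameter}, which only forces triviality on sufficiently deep wild inertia), and the group of tamely ramified classes $H^1_{\mrm{tame}}(W_F, Z(\hat{\mrm{U}}_n))$ is dual to the tame quotient of $\mrm{U}_1(E/F)(F)$, hence cyclic of order two; the unique nontrivial class maps under $Z(\hat{\mrm{U}}_n)\hookrightarrow\hat S=\prod_i\hat{\mrm{U}}_1(E_i/F_i)$ (diagonal in each factor) to the class whose restriction to each $\mrm{U}_1(E_i/F_i)$ is the nontrivial quadratic character, which is precisely $z\in C_\lambda^\vee$. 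Your conclusion is correct; only the pointwise intersection claim is off. The paper itself leaves this point implicit, so this is a place where your added justification is welcome but should be tightened.
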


\appendix

\section{Some results on discriminants}\label{app: disc}
Here we collect some facts about the Legendre symbols of discriminants over finite fields.  These results may be ``well known'' (Lemma \ref{disc trace pairing} especially), but we did not find a reference, and have opted to provide the proofs ourselves.

We recall the setup. Let $k$ be a finite field of size $q$ and $k_d$ its unique extension of degree $d$ for each $d \ge 1$. If $(V, \langle, \rangle )$ is a quadratic space over $k$, we define its discriminant relative to a $k$-basis $\left\{ v_1, \ldots, v_n \right\}$ of $V$ to be $\det \left( \langle v_i, v_j \rangle \right)$. Choosing a different basis changes the discriminant by a square in $k$. Therefore, if we denote by $\left( \frac{\cdot}{q} \right)$ the Legendre symbol on $k$, then $\leg{\disc(V, \langle, \rangle )}{q}$ is well-defined (i.e. independent of a choice of basis).

Now consider $k_d$ as an $k$-vector space. Any non-degenerate symmetric bilinear form on $k_d$ for which multiplication by $\lambda \in k_d$ is self-adjoint can be realized as 
\[
 \langle x, y \rangle_{\alpha} = \Tr_{ k_d / k} (\alpha xy) \qquad \textnormal{ for some } \alpha \in k^{\times}
 \]
 by the non-degeneracy of the trace pairing.

\begin{lemma}\label{Legendre norm}
We have $ \left( \frac{\Nm_{k_d / k}(\alpha)}{q} \right) = \left( \frac{\alpha}{q^d} \right)$
for $\alpha \in k_d$. In particular, for $\alpha \in k$ we have $\left( \frac{ \alpha^d }{q} \right) = \left( \frac{\alpha}{q^d} \right)$.
\end{lemma}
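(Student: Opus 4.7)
The plan is to observe that both sides of the identity are quadratic characters of $k_d^\times$ evaluated at $\alpha$, and then to identify them either by uniqueness or by a direct exponent computation.

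The direct approach is the cleanest. Recall that the Legendre symbol on any finite field is given by a half-power exponentiation: $\leg{x}{q^d} = x^{(q^d-1)/2}$ for $x \in k_d^\times$. The norm map is given by the Galois-orbit product
\[
\Nm_{k_d/k}(\alpha) = \prod_{i=0}^{d-1} \alpha^{q^i} = \alpha^{(q^d-1)/(q-1)}.
\]
Substituting, we obtain
\[
\leg{\Nm_{k_d/k}(\alpha)}{q} = \left(\alpha^{(q^d-1)/(q-1)}\right)^{(q-1)/2} = \alpha^{(q^d-1)/2} = \leg{\alpha}{q^d},
\]
which is the desired identity.

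For the ``in particular'' part, if $\alpha \in k$ then $\Nm_{k_d/k}(\alpha) = \alpha^d$ directly from the definition, and the general identity specializes to $\leg{\alpha^d}{q} = \leg{\alpha}{q^d}$. I do not anticipate any obstacles in this proof; as an alternative phrasing one could note that $\leg{\Nm_{k_d/k}(\cdot)}{q}$ is a quadratic character of $k_d^\times$ which is nontrivial (since the norm is surjective and the target Legendre symbol is nontrivial), hence must coincide with the unique nontrivial quadratic character $\leg{\cdot}{q^d}$, but the exponent computation is more concrete and requires no appeal to uniqueness.
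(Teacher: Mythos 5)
Your proof is correct, but it takes a different route from the paper. The paper's argument is the soft one you mention only as an afterthought: the norm $\Nm_{k_d/k}$ is a surjective homomorphism $k_d^\times \to k^\times$ that carries squares to squares, and since squares have index $2$ on both sides, a counting argument forces non-squares to map to non-squares; hence $\leg{\Nm_{k_d/k}(\cdot)}{q}$ and $\leg{\cdot}{q^d}$ have the same kernel and so coincide. Your main argument instead invokes Euler's criterion $\leg{x}{q^d} = x^{(q^d-1)/2}$ together with the explicit formula $\Nm_{k_d/k}(\alpha) = \alpha^{(q^d-1)/(q-1)}$ and simply multiplies exponents; this is more computational and arguably more self-contained, since it does not rely on surjectivity of the norm (though that is elementary). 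Both proofs are sound and of comparable length; the computational one has the small advantage of yielding the result without a counting step, while the paper's version avoids writing out Euler's criterion. One minor point worth keeping in mind if you use Euler's criterion: the equality $\Nm_{k_d/k}(\alpha)^{(q-1)/2} = \alpha^{(q^d-1)/2}$ takes place inside $k_d^\times$, and one then uses that $p > 2$ so $\pm 1$ are distinct elements of the prime field, making the identification with the integer-valued Legendre symbol unambiguous; this is implicit in your write-up and is fine, but is the only place where an extra word of justification could be added.
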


\begin{proof}
The norm map is a surjective group homomorphism, and clearly preserves the property of being a square. Since there are as many squares as non-squares in each of $k^{\times}$ and $k_d^{\times}$, it must be the case that non-squares in $k_d^{\times}$ are mapped by $\Nm_{k_d/k}$ to non-squares in $k^{\times}$. 
\end{proof}

\begin{lemma}\label{disc trace pairing} Let $D_{\alpha}$ be the discriminant of $\left(  k_d, \langle , \rangle_{\alpha} \right)$. Then we have
\[ \left( \frac{D_{\alpha}}{q} \right) = (-1)^{d-1} \cdot \left( \frac{\alpha}{q^d} \right). \]
\end{lemma}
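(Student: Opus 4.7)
The plan is to reduce to the case $\alpha = 1$ and then compute the discriminant of the trace form using a power basis.

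First I would handle the reduction. The map ``multiplication by $\alpha$'' is a $k$-linear endomorphism $M_\alpha$ of $k_d$ whose determinant is $\Nm_{k_d/k}(\alpha)$. Since $\langle x,y\rangle_\alpha = \langle \alpha x,y\rangle_1$, in any $k$-basis the Gram matrices are related by $G_\alpha = M_\alpha^T G_1$, so $D_\alpha = \Nm_{k_d/k}(\alpha)\cdot D_1$. Combined with Lemma \ref{Legendre norm}, this reduces the claim to showing $\leg{D_1}{q} = (-1)^{d-1}$.

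Next, I would fix a primitive element $\theta$ of the cyclic extension $k_d/k$ and use the power basis $\{1,\theta,\dots,\theta^{d-1}\}$. In this basis the Gram matrix of $\langle\cdot,\cdot\rangle_1$ is $(\Tr_{k_d/k}(\theta^{i+j}))_{0\le i,j\le d-1}$, which factors as $VV^T$ where $V_{i,l}=(\theta^{q^l})^i$. Thus $D_1 = \det(V)^2$, and by the Vandermonde formula
\[
\sqrt{D_1} := \det(V) = \prod_{0\le l<l'\le d-1}\bigl(\theta^{q^{l'}}-\theta^{q^l}\bigr) \in k_d^\times.
\]
This is nonzero because the conjugates $\theta^{q^l}$ are distinct (as $\theta$ generates $k_d/k$).

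The crucial step, which I expect to be the main (but very manageable) obstacle, is to compute the action of the Frobenius $\sigma\colon x\mapsto x^q$ on $\sqrt{D_1}$. Frobenius cyclically permutes the exponents $l\mapsto l+1 \pmod d$; for pairs $(l,l')$ with $l'<d-1$ the induced pair $(l+1,l'+1)$ preserves the order, but for each of the $d-1$ pairs with $l'=d-1$ the pair gets reversed and contributes a sign $-1$. Therefore $\sigma(\sqrt{D_1}) = (-1)^{d-1}\sqrt{D_1}$.

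Finally I would conclude by separating parities. If $d$ is odd, then $\sqrt{D_1}$ is Frobenius-fixed, hence lies in $k$, so $D_1$ is a square and $\leg{D_1}{q}=1=(-1)^{d-1}$. If $d$ is even (so $p>2$ ensures $-1\ne 1$), then $\sqrt{D_1}\notin k$, so $D_1$ is not a square in $k$ and $\leg{D_1}{q}=-1=(-1)^{d-1}$. Combining with the reduction in the first step gives $\leg{D_\alpha}{q}=(-1)^{d-1}\leg{\alpha}{q^d}$.
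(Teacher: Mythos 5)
Your proof is correct and follows essentially the same route as the paper: reduce to $\alpha=1$ using $\det M_\alpha=\Nm_{k_d/k}(\alpha)$ and Lemma~\ref{Legendre norm}, then compute $D_1$ as the square of a Vandermonde determinant in the conjugates of a primitive element and determine its Legendre symbol by tracking the sign $(-1)^{d-1}$ acquired under the Frobenius action. Your exposition of the sign count (exactly the $d-1$ pairs with $l'=d-1$ get reversed under the cyclic shift) is a slightly cleaner version of the paper's Laplace/Vandermonde argument, but the underlying idea is identical.
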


\begin{proof}
We start by showing how the formula for $D_{\alpha}$ for general $\alpha$ follows from that of $D_1$. Notice that on the right hand side of the formula we aim to prove, the only dependence on $\alpha$ is on the third factor, which is clearly $1$ when $\alpha =1$. Suppose then that we proved that 
\begin{equation} \label{delta1} \left( \frac{D_1}{q} \right) = (-1)^{d-1}.
\end{equation}
To prove the formula for $D_{\alpha}$, it then suffices to show that $\left( \frac{D_{\alpha}}{q} \right) = \left( \frac{D_1}{q} \right)  \cdot \left( \frac{\alpha}{q^d} \right)$. 

Fix a primitive element $x$ for $k_d \supset k$, and consider the basis $\{ x^i \}_{i=1}^d$. We want to show that \[ \left( \frac{\det \left( \Tr_{k_d / k} (\alpha x^i x^j ) \right) }{q} \right) = \left( \frac{\det \left( \Tr_{k_d / k} ( x^i x^j ) \right) }{q} \right) \cdot \left( \frac{\alpha}{q^d} \right). \]
Notice that \[ \Tr (\alpha x^i x^j) = \sum_{k=1}^d (\alpha x^i x^j)^{q^k} = \sum_{k=1}^d (\alpha x^i)^{q^k} \cdot (x^j)^{q^k} \]
so in terms of the two $d \times d$ matrices $\left( A_{\alpha} \right)_{i,k} = \alpha^{q^k} (x^i)^{q^k}$ and $B_{k,j} = (x^j)^{q^k}$ we have $\Tr (\alpha x^i x^j) = (A_{\alpha} \cdot B)_{i,j}$.
In particular, $\det \left( \Tr (\alpha x^i x^j) \right) = \det (A_{\alpha}) \cdot \det B$. 

Now, factoring out $\alpha^{q^k}$ from the $k$th column of $A_{\alpha}$  makes it clear that
 \[ \det (A_{\alpha}) = \prod_{k=1}^d \alpha^{q^k} \cdot \det (A_1) = \Nm_{k_d / k}(\alpha) \cdot \det (A_1). \]
In particular, $ \det \left( \Tr(x^i x^j) \right) = \det (A_1) \cdot \det B$, so by Lemma \ref{Legendre norm} the difference between $\leg{D_{\alpha}}{q}$ and $\leg{D_1}{q}$ is $\Nm_{k_d / k}(\alpha)  = \leg{\alpha}{q^d}$, as desired. \\

It remain to prove \eqref{delta1}. Choosing the basis $\{ x^{i} \}$ as before, saw that 
\[
\det \left( \Tr_{k_d / k} (v_i v_j) \right) = \det (A_{ik}) \det (B_{kj}) = \left( \det (A_{ij}) \right)^2
\]
where $A_{ik} = (x^i)^{q^k} = x^{iq^k} = \left( x^{q^k} \right)^i$, and $\det (B_{ij}) = \det (A_{ik})$ because $B$ is simply the transpose of $A$. Using that $(A_{ik})$ is a Vandermonde matrix in the variables $\{x^q, \ldots, x^{q^d} \}$, we find that
 \[ \det \left( \Tr_{k_d / k} (v_i v_j) \right) = \left( \prod_{1 \le i < j \le d} \left( x^{q^j} - x^{q^i} \right) \right)^2. \]
We need to understand when this quantity is a square in $\F_q$. It is obviously a square in $\F_{q^d}$, so it is a square in $\F_q$ if and only if $\prod_{1 \le i < j \le d} \left( x^{q^j} - x^{q^i} \right)$ is already in $\F_q$. This is the case if and only if it is fixed by $\Frob_q$. Applying $\Frob_q$ permutes the factors, but with $d-1$ sign changes coming from the terms indexed by $(i,j=d)$, hence changes the product by the sign $(-1)^{d-1}$, which is exactly what we wanted to find. 
\end{proof}

Let us extract a non-obvious consequence of this result. 

\begin{cor}\label{disc multiplier}
Let $B \co k_d  \rightarrow k$ be a symmetric bilinear pairing for which multiplication by $\lambda \in k_d$ is self-adjoint. Then $B_{\alpha}(x,y) := B(\alpha x, y)$ is another pairing with the same property, and 
\[
\leg{B_{\alpha}}{q} = \leg{B}{q} \leg{\alpha}{q}.
\]
\end{cor}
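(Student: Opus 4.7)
The plan is to reduce everything to Lemma \ref{disc trace pairing} by putting both $B$ and $B_\alpha$ into the explicit trace form $\Tr_{k_d/k}(\beta \cdot xy)$. First I would verify the preliminary assertion that $B_\alpha$ is still a symmetric bilinear pairing on $k_d$ with respect to which multiplication by every $\lambda \in k_d$ is self-adjoint: symmetry follows from $B(\alpha x, y) = B(x, \alpha y) = B(\alpha y, x)$, using that $\alpha$ is self-adjoint for $B$, and the self-adjointness of $\lambda$ with respect to $B_\alpha$ follows from commutativity of $k_d$, since $B_\alpha(\lambda x, y) = B(\alpha\lambda x, y) = B(\alpha x, \lambda y) = B_\alpha(x, \lambda y)$.

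Next, by the non-degeneracy of the trace pairing invoked in the discussion preceding Lemma \ref{Legendre norm}, every such form is of the shape $\langle x, y\rangle_\beta := \Tr_{k_d/k}(\beta xy)$ for a unique $\beta \in k_d^\times$. Writing $B = \langle \cdot, \cdot\rangle_\beta$ and exploiting associativity and commutativity of multiplication in $k_d$, we directly compute
\[
B_\alpha(x,y) = \Tr_{k_d/k}(\beta \cdot \alpha x \cdot y) = \Tr_{k_d/k}((\alpha\beta) \cdot xy) = \langle x, y\rangle_{\alpha\beta}.
\]
Thus the trace-form parameter of $B_\alpha$ is simply $\alpha\beta$.

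Now I would apply Lemma \ref{disc trace pairing} to both pairings to obtain
\[
\leg{\disc(B)}{q} = (-1)^{d-1}\leg{\beta}{q^d}, \qquad \leg{\disc(B_\alpha)}{q} = (-1)^{d-1}\leg{\alpha\beta}{q^d}.
\]
Multiplicativity of the Legendre symbol in $k_d$ gives $\leg{\alpha\beta}{q^d} = \leg{\alpha}{q^d}\leg{\beta}{q^d}$, and the signs $(-1)^{d-1}$ cancel when we take the ratio. Finally, by Lemma \ref{Legendre norm} one has $\leg{\alpha}{q^d} = \leg{\Nm_{k_d/k}(\alpha)}{q}$, which recovers the claimed factor (the notation $\leg{\alpha}{q}$ in the statement being shorthand for this norm symbol; in the case $\alpha \in k$ one may also rewrite this as $\leg{\alpha^d}{q}$).

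There is no real obstacle here once Lemma \ref{disc trace pairing} is granted; the only thing to be careful about is the bookkeeping of the Legendre-symbol conventions between $k$ and $k_d$, and the verification that the self-adjointness hypothesis on $\alpha$ is exactly what allows us to absorb $\alpha$ into the trace-form parameter. In that sense the corollary is a clean consequence extracted by inserting $\alpha$ into the parameter $\beta$ appearing in Lemma \ref{disc trace pairing}.
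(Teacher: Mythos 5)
Your proof is correct and is the same as the paper's, which simply says: write $B(x,y) = \Tr_{k_d/k}(\beta xy)$ for some $\beta$ and apply Lemma \ref{disc trace pairing}. You have merely fleshed out the one-line argument, including the (legitimate) observation that the $\leg{\alpha}{q}$ in the statement should be read as $\leg{\alpha}{q^d}$ (equivalently $\leg{\Nm_{k_d/k}\alpha}{q}$) when $\alpha\in k_d$.
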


\begin{proof}
We have $B(x,y) = \Tr_{k_d/k}(\beta xy)$ for some $\beta$, and then the result follows from Lemma \ref{disc trace pairing}. 
\end{proof}

We next study the discriminant of a pairing with a different form. Fix a primitive element $x \in k_d$ so that $k_d \cong k[x] / p(x)$. Then every element $a \in k_d$ admits a unique representation  
\[
a = \sum_{i=0}^{d-1} a_i x^i, \quad a_i \in k.	
\]
We define the $k$-linear functional $\omega: k_d \longrightarrow k$ by $\omega(a) = a_{d-1}$. We set 
 \[ 
 \left( a,b \right)_{\alpha} = \omega(\alpha ab) \qquad \forall \alpha \in k_d^*.
  \]
For every $\alpha$, this is a symmetric bilinear pairing $k_d \times k_d \rightarrow k$ for which multiplication by any $\lambda \in k_d$ is self-adjoint. 

\begin{lemma}\label{disc weird pairing} 
Every symmetric bilinear pairing $k_d \times k_d \rightarrow k$ for which multiplication by any $\lambda \in k_d$ is self-adjoint agrees with $(\cdot, \cdot)_{\alpha}$ for some $\alpha$.  Moreover, let $D_{\alpha}'$ be the discriminant of $(\cdot, \cdot)_{\alpha}$. Then 
\[ 
\left( \frac{D_{\alpha}'}{q} \right) = \leg{(-1)^{\lfloor d/2\rfloor}}{q}  \left( \frac{\alpha}{q^d} \right). 
\]
\end{lemma}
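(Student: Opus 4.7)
The plan is to reduce everything to Lemma~\ref{disc trace pairing} by expressing the functional $\omega$ in terms of the trace.

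First, for the assertion that every such pairing is of the form $(\cdot,\cdot)_\alpha$: since $\omega \co k_d \rightarrow k$ is a nonzero $k$-linear functional and the trace pairing $(u,v) \mapsto \Tr_{k_d/k}(uv)$ is non-degenerate, there is a unique $\beta \in k_d^\times$ with $\omega(a) = \Tr_{k_d/k}(\beta a)$ for all $a \in k_d$. Then $(a,b)_\alpha = \Tr_{k_d/k}(\alpha\beta\, ab) = \langle a,b\rangle_{\alpha\beta}$, so as $\alpha$ ranges over $k_d^\times$, the pairing $(\cdot,\cdot)_\alpha$ ranges over all pairings of the form $\langle \cdot, \cdot \rangle_\gamma$. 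These are all such pairings, because any symmetric $k$-bilinear form on $k_d$ for which multiplication by $k_d$ is self-adjoint is determined by the $k$-linear functional $a \mapsto B(1, a)$, which can be any $k$-linear functional.

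Next, Lemma~\ref{disc trace pairing} immediately gives
\[
\left(\frac{D'_\alpha}{q}\right) = \left(\frac{D_{\alpha\beta}}{q}\right) = (-1)^{d-1}\left(\frac{\alpha}{q^d}\right)\left(\frac{\beta}{q^d}\right),
\]
so it remains to show $(-1)^{d-1}\bigl(\tfrac{\beta}{q^d}\bigr) = \bigl(\tfrac{(-1)^{\lfloor d/2\rfloor}}{q}\bigr)$. To identify $\beta$, I would use Lagrange interpolation: writing $p(T) = \prod_{i=1}^{d}(T-\theta_i)$ over $\overline{k}$, any $a \in k[x]/p(x)$ of degree $<d$ satisfies
\[
a(x) = \sum_{i=1}^d \frac{a(\theta_i)}{p'(\theta_i)} \prod_{j\neq i}(x-\theta_j),
\]
and the coefficient of $x^{d-1}$ on the right is $\sum_i a(\theta_i)/p'(\theta_i)$. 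Thus $\omega(a) = \sum_i a(\theta_i)/p'(\theta_i)$. Matching this with $\Tr_{k_d/k}(\beta a) = \sum_i \beta(\theta_i) a(\theta_i)$ yields $\beta = 1/p'(x)$ in $k[x]/p(x) = k_d$.

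Finally, I compute $\bigl(\tfrac{\beta}{q^d}\bigr) = \bigl(\tfrac{p'(x)}{q^d}\bigr)$ using Lemma~\ref{Legendre norm}:
\[
\Nm_{k_d/k}(p'(x)) = \prod_{i=1}^d p'(\theta_i) = \prod_{i\neq j}(\theta_i-\theta_j) = (-1)^{\binom{d}{2}}\disc(p).
\]
The proof of Lemma~\ref{disc trace pairing} identifies $\disc(p)$ with $D_1$ (the discriminant of the trace pairing in the basis $\{x^i\}$) and shows $\bigl(\tfrac{D_1}{q}\bigr) = (-1)^{d-1}$. Combining,
\[
(-1)^{d-1}\left(\frac{\beta}{q^d}\right) = (-1)^{d-1}\cdot \left(\frac{(-1)^{\binom{d}{2}}}{q}\right)\cdot (-1)^{d-1} = \left(\frac{(-1)^{\binom{d}{2}}}{q}\right),
\]
and one verifies $\binom{d}{2} \equiv \lfloor d/2\rfloor \pmod{2}$ by a short parity check ($d=2k$ or $d=2k+1$ both give $k \pmod 2$), completing the proof.

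The main conceptual step is the Lagrange-interpolation identification $\beta = 1/p'(x)$; the rest is bookkeeping with the norm and signs. The subtle sign-matching $(-1)^{\binom{d}{2}} = (-1)^{\lfloor d/2\rfloor}$ is the only place where a small case analysis is needed.
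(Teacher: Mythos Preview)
Your proof is correct and takes a genuinely different route from the paper. The paper proceeds by a direct computation of the Gram matrix of $(\cdot,\cdot)_1$ in the power basis $\{x^i\}_{i=0}^{d-1}$: the matrix $\bigl(\omega(x^{i+j})\bigr)$ has $1$'s along the anti-diagonal and its only nonzero entries on or above it, so a Laplace expansion gives determinant $(-1)^{d(d-1)/2}$, and then one checks $(-1)^{d(d-1)/2}=(-1)^{\lfloor d/2\rfloor}$. Your argument instead identifies $\omega$ with $a\mapsto\Tr_{k_d/k}(a/p'(x))$ via Lagrange interpolation (Euler's formula for the different), thereby reducing everything to Lemma~\ref{disc trace pairing} and a norm computation of $p'(x)$.

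What each approach buys: the paper's computation is completely elementary and self-contained, but is specific to this particular basis and pairing. Your argument is more conceptual---it explains \emph{why} the two families of pairings $\{(\cdot,\cdot)_\alpha\}$ and $\{\langle\cdot,\cdot\rangle_\gamma\}$ coincide (they differ by the inverse different $1/p'(x)$)---and immediately generalises to any \'etale $k$-algebra in place of $k_d$. The cost is that you invoke the content of Lemma~\ref{disc trace pairing} (including its internal identification of $D_1$ with the polynomial discriminant) rather than giving an independent calculation.
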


\begin{proof}
The first statement follows from the non-degeneracy of the form $(\cdot, \cdot)_1$ and counting. Thanks to Corollary \ref{disc multiplier}, it suffices to prove the second claim for $\alpha=1$. Taking as basis $\{ x^i \}_{i=0}^{d-1}$, we need to compute $\det \left( (x^i, x^j)_1 \right)_{i,j=0 \ldots d-1} = \det \left( \omega(x^{i+j}) \right)_{i=0, \ldots, d-1}$, which we will do by hand. 

In particular, we notice that the first row of the corresponding matrix $A_1$, where $i=0$, has the only nonzero element in the last column since $\omega(x^0 x^j) \neq 0 \iff j=d-1$, in which case $\omega(x^{d-1})=1$.

We can consider the Laplace expansion along the first row then. In the second row ($i=1$) we have that $\omega(x^1 x^j) \neq 0 \iff j=d-2, d-1$. Since we do not care about the last column, the only contribution to our determinant when we consider the Laplace expansion along the second row is that of $\omega(x^1 x^{d-2}) =1$.

Continuing in this way, we obtain that 
\[ 
\det A = (-1)^{d+1} \cdot (-1)^{d-1+1} \cdot \ldots \cdot (-1)^{1+1} = \prod_{i=1}^{d} (-1)^{i+1} =  (-1)^{\sum_{j=0}^{d-1} j} = (-1)^{\frac{d(d-1)}{2}}. 
\]
Now a case-by-case verification shows that for each possible residue class of $d \bmod 4$, we have that $ (-1)^{\frac{d(d-1)}{2}} = (-1)^{\floor{\frac{d}{2}}}$ as desired.

\end{proof}

\bibliographystyle{amsalpha}

\bibliography{Bibliography}

\providecommand{\bysame}{\leavevmode\hbox to3em{\hrulefill}\thinspace}
\providecommand{\MR}{\relax\ifhmode\unskip\space\fi MR }
\providecommand{\MRhref}[2]{%
  \href{http://www.ams.org/mathscinet-getitem?mr=#1}{#2}
}
\providecommand{\href}[2]{#2}
\begin{thebibliography}{RLYG12}

\bibitem[AS08]{AS08}
Jeffrey~D. Adler and Loren Spice, \emph{Good product expansions for tame
  elements of {$p$}-adic groups}, Int. Math. Res. Pap. IMRP (2008), no.~1, Art.
  ID rp. 003, 95. \MR{2431235}

\bibitem[BG14]{BG14}
Manjul Bhargava and Benedict~H. Gross, \emph{Arithmetic invariant theory},
  Symmetry: representation theory and its applications, Progr. Math., vol. 257,
  Birkh\"auser/Springer, New York, 2014, pp.~33--54. \MR{3363006}

\bibitem[BH05]{BH05}
Colin~J. Bushnell and Guy Henniart, \emph{The essentially tame local
  {L}anglands correspondence. {II}. {T}otally ramified representations},
  Compos. Math. \textbf{141} (2005), no.~4, 979--1011. \MR{2148193}

\bibitem[DR09]{DR09}
Stephen DeBacker and Mark Reeder, \emph{Depth-zero supercuspidal {$L$}-packets
  and their stability}, Ann. of Math. (2) \textbf{169} (2009), no.~3, 795--901.
  \MR{2480618}

\bibitem[GR10]{GR10}
Benedict~H. Gross and Mark Reeder, \emph{Arithmetic invariants of discrete
  {L}anglands parameters}, Duke Math. J. \textbf{154} (2010), no.~3, 431--508.
  \MR{2730575}

\bibitem[IR90]{IR90}
Kenneth Ireland and Michael Rosen, \emph{A classical introduction to modern
  number theory}, second ed., Graduate Texts in Mathematics, vol.~84,
  Springer-Verlag, New York, 1990. \MR{1070716}

\bibitem[Kal13]{K13}
Tasho Kaletha, \emph{Simple wild {$L$}-packets}, J. Inst. Math. Jussieu
  \textbf{12} (2013), no.~1, 43--75. \MR{3001735}

\bibitem[Kal15]{Kal15}
\bysame, \emph{Epipelagic {$L$}-packets and rectifying characters}, Invent.
  Math. \textbf{202} (2015), no.~1, 1--89. \MR{3402796}

\bibitem[Kal16]{Kal17}
\bysame, \emph{Regular supercuspidal representations}, 2016, Preprint available
  at \url{https://arxiv.org/pdf/1602.03144.pdf}.

\bibitem[KR71]{KR71}
B.~Kostant and S.~Rallis, \emph{Orbits and representations associated with
  symmetric spaces}, Amer. J. Math. \textbf{93} (1971), 753--809. \MR{0311837}

\bibitem[Lan]{Lang76}
Robert~P. Langlands, \emph{On the functional equation of the artin
  l-functions}.

\bibitem[Lan97]{Lang97}
R.~P. Langlands, \emph{Representations of abelian algebraic groups}, Pacific J.
  Math. (1997), no.~Special Issue, 231--250, Olga Taussky-Todd: in memoriam.
  \MR{1610871}

\bibitem[LS87]{LS87}
R.~P. Langlands and D.~Shelstad, \emph{On the definition of transfer factors},
  Math. Ann. \textbf{278} (1987), no.~1-4, 219--271. \MR{909227}

\bibitem[MP94]{MP94}
Allen Moy and Gopal Prasad, \emph{Unrefined minimal {$K$}-types for {$p$}-adic
  groups}, Invent. Math. \textbf{116} (1994), no.~1-3, 393--408. \MR{1253198}

\bibitem[Pra01]{Pra01}
Gopal Prasad, \emph{Galois-fixed points in the {B}ruhat-{T}its building of a
  reductive group}, Bull. Soc. Math. France \textbf{129} (2001), no.~2,
  169--174. \MR{1871292}

\bibitem[RLYG12]{RLYG}
Mark Reeder, Paul Levy, Jiu-Kang Yu, and Benedict~H. Gross, \emph{Gradings of
  positive rank on simple {L}ie algebras}, Transform. Groups \textbf{17}
  (2012), no.~4, 1123--1190. \MR{3000483}

\bibitem[RY14]{RY14}
Mark Reeder and Jiu-Kang Yu, \emph{Epipelagic representations and invariant
  theory}, J. Amer. Math. Soc. \textbf{27} (2014), no.~2, 437--477.
  \MR{3164986}

\bibitem[Ser79]{Serre79}
Jean-Pierre Serre, \emph{Local fields}, Graduate Texts in Mathematics, vol.~67,
  Springer-Verlag, New York-Berlin, 1979, Translated from the French by Marvin
  Jay Greenberg. \MR{554237}

\end{thebibliography}

\end{document}